
\NeedsTeXFormat{LaTeX2e}

\documentclass{lms}

\usepackage{amsmath}
\usepackage{amssymb}
\usepackage{graphicx}
\usepackage{amsfonts}
\usepackage{mathdots}


\newtheorem{Theorem}{Theorem}[section] 
\newtheorem{Lemma}[Theorem]{Lemma}     
\newtheorem{Corollary}[Theorem]{Corollary}
\newtheorem{Proposition}[Theorem]{Proposition}

\newnumbered{assertion}{Assertion}    
\newnumbered{conjecture}{Conjecture}  
\newnumbered{Definition}{Definition}
\newnumbered{hypothesis}{Hypothesis}
\newnumbered{Remark}{Remark}
\newnumbered{note}{Note}
\newnumbered{observation}{Observation}
\newnumbered{problem}{Problem}
\newnumbered{question}{Question}
\newnumbered{algorithm}{Algorithm}
\newnumbered{Example}{Example}
\newunnumbered{notation}{Notation} 


\newcommand{\ADC}{\operatorname{ADC}}

\def\HH{\mathcal {H}}

\def\R{\mathbb{R}}
\def\D{\mathbb{D}}
\def\T{\mathbb{T}}
\def\N{\mathbb{N}}
\def\Z{\mathbb{Z}}
\def\ov{\overline}





\title{Reverse Carleson embeddings for model spaces} 

\author{Blandign\`{e}res, Fricain, Gaunard, Hartmann and Ross}

\classno{30J05, 30H10, 46E22 (primary)}

\extraline{This work is supported by ANR FRAB: ANR-09-BLAN-0058-02}

\begin{document}
\maketitle

\begin{abstract}
The classical embedding theorem of Carleson deals with finite positive Borel measures $\mu$ on the closed unit disk for which there exists a positive constant $c$ such that $\|f\|_{L^2(\mu)} \leq c \|f\|_{H^2}$ for all  $f \in H^2$, the Hardy space of the unit disk. Lef\`evre et al.\ examined measures $\mu$ for which there exists a positive constant $c$ such that $\|f\|_{L^2(\mu)} \geq c \|f\|_{H^2}$ for all  $f \in H^2$. The first type of inequality above was explored with $H^2$ 
replaced by one of the model spaces $(\Theta H^2)^{\perp}$ by Aleksandrov, Baranov, Cohn, Treil, and Volberg. In this paper we discuss the second type of inequality in $(\Theta H^2)^{\perp}$. 
\end{abstract}



\section{Introduction}

Let $H^2$ be the {classical} Hardy space of the open unit disk $\D$ \cite{Duren,Garnett} 
with norm $\|\cdot\|_2$, and let $H^\infty$ be the space of bounded analytic functions on $\D$. Let $M_{+}(\D)$ denote the finite positive Borel measures on $\D$, and, for $\mu \in M_{+}(\D)$, let $\|\cdot\|_{\mu}$ be the norm in $L^2(\mu)$. 
A beautiful theorem of L. Carleson \cite{Garnett} says that $H^2$  can be continuously embedded into $L^2(\mu)$, i.e.,  
\begin{equation} \label{CE}
\exists  \; c > 0 \; \; \mbox{such that} \; \; \|f\|_{\mu} \leq c \|f\|_2,
 \qquad \forall f \in H^2
\end{equation}
 if and only if 
\begin{equation} \label{Carleson window ineq}
\sup_{I}\frac{\mu(S(I))}{m(I)}<\infty,
\end{equation}
where the above supremum is taken over all arcs $I$ of the unit circle $\mathbb{T} = \partial \mathbb{D}$, $m := d \theta/2 \pi$ is  normalized Lebesgue measure on $\mathbb{T}$, and $S(I)$ is the Carleson window 
\begin{equation} \label{window}
S(I):=\left\{ |z| \leq 1:\;\frac{z}{|z|}\in I,\;1-|z|\leq\frac{m(I)}{2}\right\}.
\end{equation}
We will write $H^2 \hookrightarrow L^2(\mu)$ to represent  the fact that $H^2$ can be continuously embedded into $L^2(\mu)$ via the map 
$f \mapsto f|E_{\mu},$
where $E_{\mu}$ is a carrier of $\mu$.  Measures for which this is true are called \emph{Carleson measures}. 
We will also write statements such as \eqref{CE} in the more convenient form 
$$\|f\|_{\mu} \lesssim \|f\|_2, \quad \forall f \in H^2.$$
Carleson's result can be extended to $\mu \in M_{+}(\D^{-})$ ($\D^{-}$ is the closure of $\D$) but, in the initial definition of embedding in \eqref{CE},  we need to change the phrase $\forall f \in H^2$ into $\forall f \in H^2 \cap C(\D^{-})$, owing to the fact that functions in $H^2$ are defined $m$-almost everywhere on $\T$  while functions in $L^2(\mu)$ are defined $\mu$-almost everywhere on $\T$. However, condition \eqref{Carleson window ineq} clearly implies that the restrictions of Carleson measures to the unit circle are absolutely continuous with respect to $m$, and so the initial concern in examining $\|f\|_{\mu}$ for all $f \in H^2$, and not just the continuous ones, evaporates. 

Lef\`evre et al.\ \cite{Queffelec} examined the \emph{reverse embedding problem}, i.e., when is the above embedding injective with closed range, equivalently, when does \eqref{CE} hold as well as the reverse inequality 
\begin{equation} \label{RCE}
\|f\|_2 \lesssim \|f\|_{\mu}, \quad \forall f \in H^2?
\end{equation}
They proved that if $\mu$ is a Carleson measure, then the reverse embedding happens if and only if \begin{equation} \label{RCW}
\inf_{I} \frac{\mu\left(S\left(I\right)\right)}{m\left(I\right)} > 0.
\end{equation}
In other words, the norms $\|\cdot\|_\mu$ and $\|\cdot\|_2$ are equivalent on $H^2$ if and only if both \eqref{Carleson window ineq} and \eqref{RCW} are satisfied. 

We would like to point out that D. Luecking studied  the question of reverse embeddings for Bergman spaces in \cite{Luecking85,Luecking88} and G. Chac\'on  \cite{chacon} has some related results for certain Dirichlet type spaces. 
\bigskip

The purpose of this paper is to explore reverse embeddings for model spaces $(\Theta H^2)^{\perp} = H^2 \ominus \Theta H^2$, where $\Theta$ is a non-constant 
inner function, that is $\Theta\in H^\infty$ and $\Theta$ admits radial limits of modulus one almost everywhere on $\T$. Aleksandrov \cite{Alek} showed that if $\mu \in M_{+}(\D^{-})$ satisfies
\begin{equation} \label{AE}
\|f\|_{\mu} \lesssim \|f\|_2, \quad \forall f \in (\Theta H^2)^{\perp} \cap C(\D^{-}),
\end{equation}
then each $f \in (\Theta H^2)^{\perp}$ has a finite radial limit at $\mu$-almost every point of $\T$ and the inequality in \eqref{AE} holds for every $f \in (\Theta H^2)^{\perp}$. We point out that, amazingly, $(\Theta H^2)^{\perp} \cap C(\D^{-})$ is dense in $(\Theta H^2)^{\perp}$ \cite{MR1359992} (see also \cite[p.~188]{CRM}).
Again we use the notation $(\Theta H^2)^{\perp} \hookrightarrow L^2(\mu)$ to denote the embedding $f \mapsto f|E_{\mu}$, where $E_{\mu} \subset \mathbb{D}^{-}$ is a carrier of $\mu$.

Treil and Volberg \cite{TV}, along with Cohn \cite{Cohn}, examined when embedding of model spaces actually occurs. In particular, they showed that $(\Theta H^2)^{\perp} \hookrightarrow L^2(\mu)$ as soon as there is an $\varepsilon\in\left(0,1\right)$
such that $\mu$ satisfies the condition \eqref{Carleson window ineq} but the supremum is taken only over arcs $I \subset \T$ satisfying
\begin{equation} \label{eq:intersection window level set}
S\left(I\right)\cap L\left(\Theta,\varepsilon\right)\neq\varnothing,
\end{equation}
where 
\begin{equation} \label{SLS}
L(\Theta, \varepsilon) := \{z \in \D: |\Theta(z)| < \varepsilon\}
\end{equation}
is a sub-level set for $\Theta$. In particular, we see that this condition is weaker than Carleson's one since $(\Theta H^2)^\perp$ functions are much more regular than $H^2$-functions, particularly when we are far from  $L(\Theta,\varepsilon)$. Using weighted Bernstein inequalities, Baranov \cite{Baranov-JFA05} improved the embedding result of Treil--Volberg. 

Conversely, assuming that $L(\Theta, \varepsilon)$ is connected for some $\varepsilon > 0$\footnote{Such inner functions are said to satisfy the \emph{connected level set condition}  (CLS)}, if $(\Theta H^2)^{\perp} \hookrightarrow L^{2}\left(\mu\right)$, then $\mu$ satisfies \eqref{Carleson window ineq}
for arcs $I$ satisfying \eqref{eq:intersection window level set}.

\bigskip

Again, as was asked by Lef\`evre et al.\ for Carleson measures in $H^2$, when is the embedding $(\Theta H^2)^{\perp} \hookrightarrow L^2(\mu)$ injective with closed range (equivalently, induces a reverse embedding)?  For example, if $\mu$ is any one of the \emph{Clark measures} for $\Theta$ (we will define these measures in a moment), then by results of Clark \cite{Clark72} and Poltoratskii \cite{Poltoratskii} we have the isometric embedding $(\Theta H^2)^{\perp} \hookrightarrow L^2(\mu)$. The same is true when $\mu$ is a Clark measure for an inner multiple of $\Theta$.

For another example, let  
$(\lambda_{n})_{n \geq 1} \subset \D$ be a \emph{complete interpolating
sequence} for  $(\Theta H^2)^{\perp}$. This means that 
 the sequence of reproducing kernels $(k^\Theta_{\lambda_n})_{n\geq 1}$ for $(\Theta H^2)^{\perp}$ forms an unconditional basis in $(\Theta
H^2)^{\perp}$ \cite{HNP,Nik2}. 
In this situation it turns out that for the measure 
$$\mu=\sum_{n \geq 1}\frac{1}{\|k_{\lambda_n}\|_2^2}\delta_{\lambda_{n}},$$
the norms $\|\cdot\|_\mu$ and $\|\cdot\|_2$ are equivalent on $(\Theta H^2)^\perp$ and, in particular, we have a reverse embedding. 

For a third example, suppose $M$ is a subspace of $H^2$ satisfying $f/z \in M$ whenever $f \in M$ and $f(0) = 0$. Such subspaces
$M$ are called \emph{nearly invariant} and were initially studied by Hitt and Sarason  \cite{Hitt,Sa88}. 
If $g \in M$ is the unique solution to the extremal problem, 
$$\Re(g(0)) = \sup\{\Re(f(0)) : f \in M, \|f\|_2 \leq 1\},$$ then 
 there exists an inner function $\Theta$ such that $M=g (\Theta H^2)^{\perp}$
and $g$ is an isometric multiplier of $(\Theta H^2)^{\perp}$. That is to say 
$$\|g f\|_2 = \|f\|_2, \quad \forall f \in (\Theta H^2)^{\perp}.$$
Rephrasing this a bit we see that if $d \mu = |g|^2 dm$ then the embedding $(\Theta  H^2)^{\perp} \hookrightarrow L^2(\mu)$ is isometric. As a matter of fact, 
the measures $\mu\in M_+(\T)$ which ensure an isometric embedding of $({\Theta}H^2)^{\perp}$
have been characterized by Aleksandrov \cite{Al98}.
\begin{Theorem}[(Aleksandrov)]\label{ThmAleks}
Let $\mu\in M_+(\T)$. Then the following assertions are equivalent:
\begin{itemize}
\item[(i)] $(\Theta H^2)^{\perp}$ embeds isometrically into $L^2(\mu)$;
\item[(ii)] the function $\Theta$ has non-tangential boundary values $\mu$-almost
everywhere on $\T$ and
\[
 \int_{\T}\left|\frac{1-\overline{\Theta(z)}\Theta(\zeta)}{1-\overline{z}
 \zeta}\right|^2d\mu(\zeta)=\frac{1-|\Theta(z)|^2}{1-|z|^2},
 \quad z\in\D;
\]
\item[(iii)] there exists a $\varphi\in H^{\infty}$ such that $\|\varphi\|_{\infty}\le
1$ and
\begin{eqnarray}\label{ThmAleksClark}
 \int_{\T}\frac{1-|z|^2}{|\zeta-z|^2}d\mu(\zeta)
 =\Re\left(\frac{1+\varphi(z)\Theta(z)}{1-\varphi(z)\Theta(z)}\right),
 \quad z\in\D.
\end{eqnarray}
\end{itemize}
\end{Theorem}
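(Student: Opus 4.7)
The plan is to prove (i) $\Leftrightarrow$ (ii) $\Leftrightarrow$ (iii) via reproducing kernel tests and the Herglotz representation. The model space $(\Theta H^{2})^{\perp}$ is a reproducing kernel Hilbert space with kernel $k_{z}^{\Theta}(\zeta) = (1-\overline{\Theta(z)}\Theta(\zeta))/(1-\bar{z}\zeta)$ and squared norm $\|k_{z}^{\Theta}\|_{2}^{2} = k_{z}^{\Theta}(z) = (1-|\Theta(z)|^{2})/(1-|z|^{2})$. For (i) $\Rightarrow$ (ii), an isometric embedding is a fortiori continuous, so Aleksandrov's boundary value theorem (cited above) gives $\mu$-a.e.\ non-tangential limits for every $f \in (\Theta H^{2})^{\perp}$; applying this to $k_{0}^{\Theta} = 1 - \overline{\Theta(0)}\Theta$ (or to $k_{w}^{\Theta}$ for some $w$ with $\Theta(w) \ne 0$ when $\Theta(0) = 0$) yields $\mu$-a.e.\ boundary values of $\Theta$, and then $\|k_{z}^{\Theta}\|_{\mu} = \|k_{z}^{\Theta}\|_{2}$ at each $z \in \D$ is exactly (ii). The converse (ii) $\Rightarrow$ (i) uses polarization: both $(z,w) \mapsto \int_{\T} k_{w}^{\Theta}(\zeta)\overline{k_{z}^{\Theta}(\zeta)}\, d\mu(\zeta)$ and $(z,w) \mapsto k_{w}^{\Theta}(z)$ are sesquiholomorphic on $\D \times \D$ (holomorphic in $z$, antiholomorphic in $w$), and (ii) says they agree on the diagonal $w = z$; comparing bi-Taylor coefficients in $z$ and $\bar{w}$ forces equality throughout. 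Consequently the $L^{2}(\mu)$ and $H^{2}$ inner products coincide on the dense linear span of reproducing kernels, and the boundary value theorem allows the isometry to extend to all of $(\Theta H^{2})^{\perp}$.

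For (ii) $\Leftrightarrow$ (iii), consider the Herglotz transform $F(z) := \int_{\T} \frac{\zeta+z}{\zeta-z}\, d\mu(\zeta)$, analytic on $\D$ with $\Re F = P\mu$. Condition (iii) amounts to saying that $F$ agrees, up to an additive purely imaginary constant, with $(1+\varphi\Theta)/(1-\varphi\Theta)$ for some $\varphi \in H^{\infty}$ with $\|\varphi\|_{\infty} \le 1$ (the norm bound on $\varphi$ being automatic since $|\Theta|=1$ on $\T$); equivalently, an appropriately shifted Schur function of $\mu$ is divisible by $\Theta$ in $H^{\infty}$. To connect this with (ii), multiply (ii) by $1-|z|^{2}$ and use the identity $(1-|z|^{2})|k_{z}^{\Theta}(\zeta)|^{2} = |1-\overline{\Theta(z)}\Theta(\zeta)|^{2} P_{z}(\zeta)$ for $\zeta \in \T$ (with $P_{z}$ the Poisson kernel), recasting (ii) as a real identity involving the Poisson integrals $P\mu$, $P(\Theta\, d\mu)$, and $P(|\Theta|^{2}\, d\mu)$. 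Analytically continuing these to $\D$ via the Herglotz transform and its variants, one extracts precisely the divisibility characterizing (iii).

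The hard part will be this last equivalence: isolating from the expanded form of (ii) the precise condition that the Schur function of $\mu$ is $\Theta$-divisible requires careful separation of analytic from anti-analytic pieces, Schwarz-type manipulations of the integrand $|1-\overline{\Theta(z)}\Theta(\zeta)|^{2}$, and the uniqueness theory for Herglotz representations. A secondary technical subtlety, in (ii) $\Rightarrow$ (i), is to verify that the $L^{2}(\mu)$-limit of an approximating sequence of kernel combinations coincides with the boundary-value function of the limit element in $(\Theta H^{2})^{\perp}$, which is again handled by Aleksandrov's boundary value theorem.
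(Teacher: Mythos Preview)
Your route through (ii) is quite different from the paper's. The paper does not prove the full three-way equivalence directly; it reformulates (i)~$\Leftrightarrow$~(iii) as Theorem~\ref{thm:isometric-Theorem} and proves that in Section~\ref{sec:isometric-embedding}, never passing through (ii). For (iii)~$\Rightarrow$~(i) the paper approximates $b$ by finite Blaschke products $B_n$, uses Clark's isometry for $(\Theta B_n H^2)^\perp \supset (\Theta H^2)^\perp$, and passes to a weak-$*$ limit. For (i)~$\Rightarrow$~(iii) it first treats the case where $\Theta=B$ is a Blaschke product with \emph{simple} zeros $(\lambda_n)$: writing $\mu=\sigma_{b_0}^1$ via Herglotz, it evaluates $\langle k_{\lambda_n},k_{\lambda_\ell}\rangle_\mu=\langle k_{\lambda_n},k_{\lambda_\ell}\rangle_2$ through the Clark--Ball partial isometry $\omega_{b_0}$ and deduces algebraically that $b_0(\lambda_n)$ is constant, hence $\phi_\delta\circ b_0$ is divisible by $B$. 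General inner $\Theta$ are then reduced to this case via Frostman shifts, Crofoot transforms, and weak-$*$ compactness.

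Your (i)~$\Leftrightarrow$~(ii) via reproducing kernels and diagonal-to-global sesquiholomorphic identity is correct and standard. Your (ii)~$\Leftrightarrow$~(iii), however, remains a sketch, and one concrete point is not addressed: when you expand $|1-\overline{\Theta(z)}\Theta(\zeta)|^2$ to obtain Poisson integrals of $d\mu$, $\Theta\,d\mu$, and $|\Theta|^2\,d\mu$, the last term only collapses to $P\mu$ if $|\Theta|=1$ $\mu$-a.e. Condition~(ii) asserts existence of non-tangential boundary values but not their unimodularity (the radial limit of an inner function can vanish, e.g.\ at $\zeta=1$ for $e^{-(1+z)/(1-z)}$), so you carry an extra unknown that must be eliminated before the Schur-function divisibility can be read off. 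This is fixable but is exactly the ``careful separation'' you defer. The paper's approach sidesteps the issue entirely by working with interior values at the zeros of $\Theta$ rather than boundary expansions. What your approach would buy, if completed, is an argument that stays within harmonic-analytic identities and avoids the de~Branges--Rovnyak machinery and the Frostman/Crofoot reduction; what the paper's approach buys is a proof that is elementary at each step once the Clark--Ball theory is in hand, at the cost of an auxiliary approximation argument.
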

Note that in \cite{deBranges68}, de Branges has proven the result for meromorphic inner functions and in \cite{Krein-Gorbachuck}, Krein has obtained a characterization of  isometric measures for $(\Theta H^2)^\perp$ in a more operator-theoric langage. 
\bigskip 

In the particular case when $\Theta(z)=\Theta_a(z)=\exp(-a\frac{1+z}{1-z})$, $a>0$, the question of equivalence of norms in $(\Theta_aH^2)^\perp$ was discussed in \cite{Kacnelson,Logvinenko,Panejah62,Panejah66}. In this case, the question is equivalent to the following one: for which (positive) measures $\mu$ on the real line
are the norms 
\[
\left(\int_{\mathbb R}|f(x)|^2\,d\mu(x)\right)^{1/2}\quad\hbox{and}\quad \left(\int_{\mathbb R}|f(x)|^2\,dx\right)^{1/2}
\]
equivalent on the space of entire functions of exponential type not exceeding $a/2$ and which belong to $L^2(\mathbb R)$? 

In \cite{Volberg-81} Volberg generalized the previous results and gave a complete answer for general model spaces and absolutely continuous measures $d \mu = w dm, w \in L^{\infty}(\mathbb{T})$.

\begin{Theorem}[(Volberg)]\label{thm:Volberg} Let $d\mu=wdm$, with $w\in L^\infty(\T)$, $w\geq 0$, and let $\Theta$ be an inner function. Then the following assertions are equivalent:
\begin{itemize}
\item[(i)] the norms $\|\cdot\|_{\mu}$ and $\|\cdot\|_2$ are equivalent on $(\Theta H^2)^\perp$;
\item[(ii)] if $(\lambda_n)_{n\geq 1}\subset\D$, then 
\[
\lim_{n\to +\infty}\widehat w(\lambda_n)=0\Longrightarrow \lim_{n\to +\infty}|\Theta(\lambda_n)|=1;
\]
\item[(iii)] we have 
\[
\inf_{\lambda\in\D}(\widehat w(\lambda)+|\Theta(\lambda)|)>0.
\]
\end{itemize}
\end{Theorem}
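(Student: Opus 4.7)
The plan is to prove the cycle $(i) \Rightarrow (ii) \Rightarrow (iii) \Rightarrow (i)$; the first two implications are soft, while $(iii) \Rightarrow (i)$ carries all the analytic weight. For $(ii) \Rightarrow (iii)$, I would argue by contrapositive: any sequence $(\lambda_n)$ witnessing $\inf_{\D}(\widehat w + |\Theta|) = 0$ satisfies $\widehat w(\lambda_n) \to 0$ while at the same time $|\Theta(\lambda_n)| \to 0$, which is incompatible with (ii). For $(i) \Rightarrow (ii)$, I would test the reverse inequality against the reproducing kernels $k^\Theta_\lambda(z) = (1 - \overline{\Theta(\lambda)}\Theta(z))/(1 - \overline{\lambda} z)$ of $(\Theta H^2)^\perp$. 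Since $\|k^\Theta_\lambda\|_2^2 = (1-|\Theta(\lambda)|^2)/(1-|\lambda|^2)$ and $|k^\Theta_\lambda(\zeta)|^2 \leq (1+|\Theta(\lambda)|)^2/|1-\overline\lambda\zeta|^2$ for $\zeta \in \T$, the Poisson identity $\int_\T w(\zeta)/|1-\overline\lambda\zeta|^2\,dm(\zeta) = \widehat w(\lambda)/(1-|\lambda|^2)$ yields
\[
\frac{\|k^\Theta_\lambda\|_\mu^2}{\|k^\Theta_\lambda\|_2^2} \;\leq\; \frac{1+|\Theta(\lambda)|}{1-|\Theta(\lambda)|}\,\widehat w(\lambda),
\]
and a sequence with $\widehat w(\lambda_n) \to 0$ and $|\Theta(\lambda_n)| \to c < 1$ would drive this ratio to $0$, contradicting the reverse embedding in (i).

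The analytic core is $(iii) \Rightarrow (i)$. I would recast the desired conclusion as the invertibility of the compressed multiplication operator $A_w : (\Theta H^2)^\perp \to (\Theta H^2)^\perp$, $A_w f := P_\Theta(w f)$, where $P_\Theta$ is the orthogonal projection; this operator is self-adjoint, positive, bounded by $\|w\|_\infty$, and satisfies $\langle A_w f, f\rangle = \|f\|_\mu^2$. Fix the constant $\delta > 0$ from (iii) and cover $\D = \Omega_1 \cup \Omega_2$ with $\Omega_1 = \{\widehat w \geq \delta/2\}$ and $\Omega_2 = \{|\Theta| \geq \delta/2\}$. On $\Omega_1$ the Poisson lower bound on $\widehat w$ controls $\|f\|_\mu$ via reproducing-kernel averaging; on $\Omega_2$ the Schwarz--Pick estimate $|f(\lambda)|^2 \leq \|f\|_2^2\,(1-|\Theta(\lambda)|^2)/(1-|\lambda|^2)$ forces $f \in (\Theta H^2)^\perp$ to concentrate its $L^2(dm)$-mass away from $\Omega_2$. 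After rescaling so that $\|w\|_\infty \leq 1$ and writing $\|f\|_\mu^2 = \|f\|_2^2 - \int|f|^2(1-w)\,dm$, I would close the argument by showing that the defect measure $(1-w)\,dm$ is a Carleson measure for $(\Theta H^2)^\perp$ whose constant, taken only over windows intersecting the sub-level set $L(\Theta,\varepsilon)$ as in Treil--Volberg, is strictly less than $1$; the standard absorption then gives $\|f\|_2 \lesssim \|f\|_\mu$.

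The main obstacle will be turning the pointwise dichotomy of (iii) into a global $L^2$-inequality on the boundary. The key technical tool, exactly as in Volberg's original argument, should be a weighted Bernstein / reproducing-kernel inequality of the form $(1-|\Theta(\lambda)|^2)\,|f(\lambda)|^2 \lesssim \widehat{|f|^2}(\lambda)$ for $f \in (\Theta H^2)^\perp$, which lets one compare Poisson averages of $|f|^2$ with those of $w|f|^2$ at points $\lambda$ where $\widehat w(\lambda)$ is bounded below. Making the $\delta$-$\varepsilon$ bookkeeping tight enough so that the Carleson constant of the defect over windows meeting $L(\Theta,\varepsilon)$ falls strictly below $\|w\|_\infty$, while using only the $L^\infty(\T)$-hypothesis on $w$, is the most delicate point of the proof.
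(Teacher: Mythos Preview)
The paper does not prove this theorem. It is stated as a known result of Volberg and attributed to \cite{Volberg-81}; the authors use it as a black box (for instance in the proofs of Proposition~\ref{thm:neighborhood dom.} and Proposition~\ref{DomSetDenseSpectrum}) but never reprove it. So there is no ``paper's own proof'' to compare your proposal against.

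That said, your outline for the easy implications is fine: $(ii)\Rightarrow(iii)$ by contrapositive is immediate, and testing (i) on reproducing kernels does give $(i)\Rightarrow(ii)$ with the estimate you wrote. For $(iii)\Rightarrow(i)$ you have correctly identified that this is where the real work lies, and the ingredients you mention --- the compressed Toeplitz operator $P_\Theta M_w P_\Theta$, a pointwise dichotomy on $\D$ from the uniform lower bound, and a Carleson-type control on the defect $(1-w)\,dm$ over windows meeting a sublevel set --- are in the right spirit. However, what you have written is a plan rather than a proof: the step ``making the $\delta$--$\varepsilon$ bookkeeping tight enough so that the Carleson constant of the defect over windows meeting $L(\Theta,\varepsilon)$ falls strictly below $\|w\|_\infty$'' is precisely the heart of Volberg's argument, and you have not carried it out. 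In particular, the Treil--Volberg embedding you invoke only gives a \emph{qualitative} $(\Theta H^2)^\perp\hookrightarrow L^2((1-w)dm)$, not a bound with constant strictly less than $1$; obtaining that quantitative gap from (iii) alone requires a genuine argument (in Volberg's original paper this goes through harmonic-measure and corona-type estimates), which is missing here.
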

In the above,  $\widehat w$ represents the harmonic continuation of the function $w$ into the open unit disk, that is,
\[
 \widehat w(z)=\int_{\T}w(\zeta)\frac{1-|z|^2}{|z-\zeta|^2}\,dm(\zeta),\qquad z\in\D.
\]
%

The aim of this paper is to prove a model space version of the Lef\`evre et al.\ reverse Carleson embedding theorem for $H^2$ where we weaken the condition \eqref{RCW} in the spirit of the result of Treil--Volberg for the direct embedding. Along the way, we will also develop a notion of dominating sets for model spaces and use this to  state another reverse embedding theorem. 


\section{Main results}

Our first result is a reverse embedding theorem along the lines of Treil-Volberg for which we need the following notation: 
given an arc $I\subset \T$ and a number $n>0$ we define
the amplified arc  $nI$ as the arc with same center as $I$ and 
of length $n \times m(I)$.

\begin{Theorem}\label{thm:first-reversed-embedding}
Let $\Theta$ be a $(CLS)$ inner function, $L\left(\Theta,\varepsilon_{1}\right)$
its connected sublevel set for a suitable $\varepsilon_1$, 
and $\mu \in M_{+}(\D^{-})$
such that $(\Theta H^2)^{\perp} \hookrightarrow L^{2}\left(\mu\right)$. 
There exists an $N = N(\Theta, \varepsilon_1) > 1$ such that 
if
\begin{equation}\label{eq:reversed-condition-first}
\inf_{I}\frac{\mu(S(I))}{m(I)}>0,
\end{equation}
where the infimum is taken over all arcs $I \subset \T$ with 
$$S\left(NI\right)\cap L(\Theta,\varepsilon_{1})\neq\varnothing,$$
then 
\begin{equation}\label{eq:reversed-equation}
\|f\|_2 \lesssim \|f\|_{\mu}, \qquad \forall f \in (\Theta H^2)^{\perp}.
\end{equation}
\end{Theorem}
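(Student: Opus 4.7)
The plan is to imitate the Lef\`evre--Li--Queff\'elec--Rodr\'iguez-Piazza approach for $H^2$, but localised near the spectrum of $\Theta$: replace their Whitney decomposition of the full circle by one adapted to the connected sublevel set $\Omega:=L(\Theta,\varepsilon_1)$, and replace their mean-value estimates by weighted Bernstein-type inequalities for model spaces due to Cohn and Baranov. The first step would be to construct a family of arcs $\{I_n\}\subset\T$ that is a Whitney-type decomposition of the shadow of $\Omega$ on $\T$ (with the arcs $I_n$ getting smaller as they approach the boundary spectrum of $\Theta$), such that: (a) each $I_n$ satisfies $S(NI_n)\cap\Omega\neq\varnothing$ for a fixed $N=N(\Theta,\varepsilon_1)>1$, so that the hypothesis \eqref{eq:reversed-condition-first} applies to every $I_n$; (b) the amplified Carleson windows $\{S(NI_n)\}$ have bounded overlap, controlled in terms of $N$; and (c) every $f\in(\Theta H^2)^{\perp}$ carries only a small fraction of its $L^2(m)$-norm on $\T\setminus\bigcup_n I_n$, a property which, for $(CLS)$ inner functions, follows from the inner-outer structure of $\Theta$ near its spectrum.

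Next I would prove the central local estimate: for each arc $I_n$ and every $f\in(\Theta H^2)^{\perp}$,
$$
\int_{I_n}|f|^2\,dm\;\lesssim\;\frac{m(I_n)}{\mu(S(I_n))}\int_{S(NI_n)}|f|^2\,d\mu.
$$
Thanks to the hypothesis \eqref{eq:reversed-condition-first}, the prefactor is bounded by a constant. The genuine content is a reverse mean-value / Bernstein estimate: since $S(NI_n)$ must meet $\Omega$ at some $\lambda_n$, and since the reproducing kernel $k_{\lambda_n}^{\Theta}$ of $(\Theta H^2)^{\perp}$ has norm comparable to $1/(1-|\lambda_n|^2)^{1/2}$ when $|\Theta(\lambda_n)|<\varepsilon_1$, functions in $(\Theta H^2)^{\perp}$ are sufficiently regular on $S(NI_n)$ that their $L^2(\mu)$-mass on this amplified box dominates their $L^2(m)$-mass on $I_n$. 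I would derive this via Baranov's weighted Bernstein inequality combined with the Treil--Volberg--Cohn direct embedding (used as a compactness and duality tool), after choosing $N$ large enough in terms of $\varepsilon_1$ and the $(CLS)$ constant of $\Theta$ so that the pseudohyperbolic neighbourhood of $\lambda_n$ needed for the regularity estimate fits inside $S(NI_n)$.

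Summing the local estimate over $n$, exploiting the bounded overlap of $\{S(NI_n)\}$, and absorbing the small contribution from $\T\setminus\bigcup_n I_n$ given by (c), one obtains the desired conclusion $\|f\|_2^2\lesssim\|f\|_\mu^2$ for every $f\in(\Theta H^2)^{\perp}$. The principal obstacle is the local reverse Bernstein estimate, which is where all the model-space content lies: it forces a careful balance between the amplification factor $N$, which must be large enough to capture a genuine pseudohyperbolic neighbourhood of some point in $\Omega$ on which $f$'s regularity can be asserted, and the bounded-overlap requirement on $\{S(NI_n)\}$, which prevents $N$ from being too large once the $(CLS)$-geometry of $\Theta$ is fixed. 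Extracting the correct value of $N$ from the interplay between Baranov's Bernstein weight and the $(CLS)$ geometry is the technical heart of the argument.
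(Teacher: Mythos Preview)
Your approach is genuinely different from the paper's main proof, and is in fact much closer to the alternative argument of Baranov that the paper includes as an appendix (and which removes the $(CLS)$ hypothesis altogether).

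The paper's main proof never builds a Whitney decomposition and never invokes Bernstein inequalities. Instead it exploits a discrete structure available only under $(CLS)$: the Clark measure $\sigma_\Theta^\alpha$ is then purely atomic on a sequence $(\xi_n)$, and the normalised kernels $k_{\xi_n}^\Theta/\sqrt{|\Theta'(\xi_n)|}$ form an \emph{orthonormal basis} of $(\Theta H^2)^\perp$. One takes arcs $\sigma_n$ centred at $\xi_n$ with $m(\sigma_n)=\eta|\Theta'(\xi_n)|^{-1}$; the $(CLS)$ estimate $|\Theta'(\xi_n)|^{-1}\gtrsim d(\xi_n,L(\Theta,\varepsilon_1))$ forces $S(N\sigma_n)\cap L(\Theta,\varepsilon_1)\neq\varnothing$ for a suitable $N$, so the hypothesis gives $\mu(S(\sigma_n))\gtrsim m(\sigma_n)$. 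Writing $\|f\|_2^2=\sum_n|f(\xi_n)|^2/|\Theta'(\xi_n)|$, picking $\mu_n\in S(\sigma_n)$ minimising $|f|$, and splitting $|f(\xi_n)|^2\le 2|f(\xi_n)-f(\mu_n)|^2+2|f(\mu_n)|^2$, the second piece is dominated by $\int_{S(\sigma_n)}|f|^2\,d\mu$ and summed over the \emph{disjoint} windows $S(\sigma_n)$; the first piece is controlled by Baranov's \emph{Riesz-basis perturbation} inequality $\sum_n|f(\xi_n)-f(\mu_n)|^2/|\Theta'(\xi_n)|\le \varepsilon_0 C\|f\|_2^2$ and absorbed on the left. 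Your route (Whitney arcs + weighted Bernstein) trades this clean orthonormal-basis identity for more robust local analysis, which is exactly what buys the removal of $(CLS)$ in the appendix.

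Two corrections to your sketch. First, your displayed local estimate is too clean: in both proofs the comparison of $f$ on $I_n$ with $f$ at a point of $S(I_n)$ produces an extra term of size $\varepsilon\|f\|_2^2$ (from the perturbation theorem, or from the Bernstein inequality in your version) that must be absorbed on the left; it cannot be hidden inside a single box-to-box inequality. Second, the tension you describe between ``$N$ large'' and ``bounded overlap of $S(NI_n)$'' is a red herring. The hypothesis controls $\mu(S(I))$, not $\mu(S(NI))$, so one integrates $|f|^2\,d\mu$ over the \emph{non-amplified}, pairwise disjoint windows $S(I_n)$; the amplification by $N$ is used only to certify that each $I_n$ qualifies for the hypothesis, and there is no upper constraint on $N$ coming from overlap.
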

The proof of this theorem will show how $N$ depends on 
$\Theta$ and $\varepsilon_1$.
There will also be a discussion in Section \ref{sec:proof-reverse} of why 
the $N$ is needed.

It turns actually out that the $(CLS)$-condition is not needed. Baranov
was able to provide a different proof of this fact after this paper had
been submitted. We will present his proof in a separate appendix.

Our second reverse embedding result involves the notion of a dominating set for $(\Theta H^2)^{\perp}$. A (Lebesgue measurable) subset $\Sigma\subset\mathbb{T}$, with $m\left(\Sigma\right)<1$,
is called a \emph{dominating set} for $(\Theta H^2)^{\perp}$ if 
$$
\int_{\T} |f|^2dm \lesssim \int_{\Sigma}|f|^2 dm, \quad \forall f \in (\Theta H^2)^{\perp}.
$$
Notice how this is equivalent to saying that the measure $d \mu  = \chi_{\Sigma} dm$ satisfies the reverse embedding property for $(\Theta H^2)^{\perp}$. We will discuss 
dominating sets in Section \ref{sec:dominating}. 
It is not too difficult to show that dominating sets always exists for inner functions $\Theta$ such that $\sigma(\Theta)\neq\T$ -- see \eqref{liminf-zero-set} for the definition of $\sigma(\Theta)$. Moreover, if $m(\sigma(\Theta))=0$, then dominating sets can be of arbitrary small Lebesgue measure. This is the case for (CLS) inner functions.  
The situation is more intricate when $\sigma(\Theta)=\T$. We provide
an example of a dominating set in that situation based on Smith-Volterra-Cantor
sets. Kapustin suggested a nice argument showing that dominating sets
always exist. His argument is also presented in Section \ref{sec:dominating}.

With regards to reverse embeddings, we will prove the following result. 

\begin{Theorem}\label{thm:second-reversed-embedding}
Let $\Theta$ be an inner function, $\Sigma$ be a dominating set for
$(\Theta H^2)^{\perp}$, and $\mu \in M_{+}(\D^{-})$ such that $(\Theta H^2)^\perp\hookrightarrow L^2(\mu)$.
Suppose that
\[
\inf_{I}\frac{\mu\left(S\left(I\right)\right)}{m\left(I\right)}>0,
\]
where the above infimum is taken over all arcs $I \subset \T$ such that $$I\cap\Sigma\neq\varnothing.$$
Then
\begin{equation}\label{reversed-inequality}
\|f\|_2 \lesssim \|f\|_{\mu},\qquad \forall f \in (\Theta H^2)^{\perp}.
\end{equation}
\end{Theorem}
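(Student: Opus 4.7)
The plan is to reduce via the dominating-set hypothesis to a local reverse Carleson estimate on Carleson windows, then sum with bounded overlap. Since $\Sigma$ is dominating for $(\Theta H^{2})^{\perp}$, we have $\|f\|_{2}^{2}\leq C_{\Sigma}\int_{\Sigma}|f|^{2}\,dm$, so it suffices to prove
$$\int_{\Sigma}|f|^{2}\,dm\;\lesssim\;\int|f|^{2}\,d\mu,\qquad f\in(\Theta H^{2})^{\perp}.$$
I would then cover $\Sigma\subset\mathbb{T}$ by a countable family of arcs $\{I_{j}\}$ of bounded overlap with $I_{j}\cap\Sigma\neq\varnothing$ (for example a Whitney-type cover of dyadic arcs at a suitable scale). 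The hypothesis immediately yields $\mu(S(I_{j}))\geq c_{0}\,m(I_{j})$ for every $j$.

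The heart of the argument is to prove a uniform local reverse Carleson estimate
$$\int_{I_{j}}|f|^{2}\,dm\;\lesssim\;\int_{S(I_{j})}|f|^{2}\,d\mu,$$
after which summation with the bounded overlap of the windows $\{S(I_{j})\}$ concludes immediately. To prove this local estimate I would combine the mass lower bound $\mu(S(I_{j}))\geq c_{0}\,m(I_{j})$ with a testing argument using the normalized reproducing kernel $k^{\Theta}_{z_{j}}/\|k^{\Theta}_{z_{j}}\|_{2}$ at the top point $z_{j}=(1-m(I_{j}))\,\xi_{I_{j}}$ of $S(I_{j})$, appealing to the hypothesized direct Carleson embedding $(\Theta H^{2})^{\perp}\hookrightarrow L^{2}(\mu)$ to control the upper sides, together with a Bernstein/oscillation bound for model-space functions to compare averages of $|f|^{2}$ on $I_{j}$ with those on $S(I_{j})$.

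The main obstacle is this local estimate: the bare mass lower bound on $\mu(S(I_{j}))$ does not by itself compare the two $L^{2}$ norms, and one really needs the regularity of $(\Theta H^{2})^{\perp}$ functions to close the gap. A cleaner alternative, bypassing the covering argument, is to show that the hypothesis implies
$$\inf_{\lambda\in\mathbb{D}}\bigl(P\mu(\lambda)+|\Theta(\lambda)|\bigr)>0,$$
where $P\mu$ denotes the Poisson-type balayage of $\mu$ to $\mathbb{D}$, and then to invoke a generalization of Volberg's Theorem \ref{thm:Volberg} to measures on $\overline{\mathbb{D}}$. Indeed, Volberg's theorem applied to $w=\chi_{\Sigma}$ yields $\inf_{\lambda}(P\chi_{\Sigma}(\lambda)+|\Theta(\lambda)|)>0$, and for any $\lambda\in\mathbb{D}$ with $|\Theta(\lambda)|$ small one can produce an arc $I$ with $\lambda$ close to its top and $I\cap\Sigma\neq\varnothing$, giving $P\mu(\lambda)\gtrsim\mu(S(I))/m(I)\geq c_{0}$ by the standing hypothesis. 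The delicate point in this second route is to establish (or cite) the measure-valued version of Volberg's theorem that is needed for the last step.
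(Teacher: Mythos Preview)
Your overall architecture is correct: reduce via the dominating-set inequality to $\int_{\Sigma}|f|^{2}\,dm\lesssim\|f\|_{\mu}^{2}$, cover $\Sigma$ by arcs that meet $\Sigma$, use the hypothesis $\mu(S(I))\geq c_{0}\,m(I)$, and sum. This is exactly the skeleton of the paper's proof. But the ``heart'' you identify, the uniform local estimate
\[
\int_{I_{j}}|f|^{2}\,dm\;\lesssim\;\int_{S(I_{j})}|f|^{2}\,d\mu
\]
for a \emph{fixed} cover $\{I_{j}\}$ and all $f\in(\Theta H^{2})^{\perp}$, is simply false in general. Take $\mu$ supported at a single point $z_{0}\in S(I_{j})$ with mass $c_{0}\,m(I_{j})$ (the hypothesis allows this locally), and choose $f$ vanishing at $z_{0}$ but not identically zero on $I_{j}$. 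Neither kernel testing nor Bernstein inequalities can manufacture a uniform local inequality out of a bare mass lower bound, and your proposal in fact concedes this obstacle without resolving it.

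The paper sidesteps the problem by an elementary device you have overlooked: density plus uniform continuity. Since $(\Theta H^{2})^{\perp}\cap C(\overline{\D})$ is dense and the embedding is continuous, it suffices to treat $f$ continuous on $\overline{\D}$. For such an $f$ and any $\varepsilon>0$, uniform continuity gives a $\delta>0$ so that $|f|$ oscillates by at most $\varepsilon$ on every Carleson window $S(I)$ with $m(I)<\delta$. One then chooses a \emph{finite, $f$-dependent} cover of $\Sigma$ by disjoint arcs $\sigma_{n}$ of length $<\delta$, each meeting $\Sigma$. On each $\sigma_{n}$, $|f|$ is essentially constant, so
\[
\int_{\sigma_{n}}|f|^{2}\,dm\;\le\; m(\sigma_{n})\bigl(\varepsilon+\min_{S(\sigma_{n})}|f|\bigr)^{2}\;\lesssim\; \varepsilon^{2}m(\sigma_{n})+\frac{1}{c_{0}}\int_{S(\sigma_{n})}|f|^{2}\,d\mu,
\]
and summing (the $\sigma_{n}$ and hence the $S(\sigma_{n})$ are disjoint) gives $\|f\|_{2}^{2}\le 2c\varepsilon^{2}+\frac{2ck}{c_{0}}\|f\|_{\mu}^{2}$; then let $\varepsilon\to 0$. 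No Bernstein inequality, no reproducing-kernel testing, no measure version of Volberg is needed. Your second route via a Poisson--Volberg criterion for general $\mu\in M_{+}(\overline{\D})$ is an interesting idea, but such a theorem is not available in the paper, and proving it would be far harder than the two-line uniform-continuity argument above.
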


In both Theorems \ref{thm:first-reversed-embedding} and
\ref{thm:second-reversed-embedding},  the hypothesis 
$(\Theta H^2)^{\perp}\subset L^2(\mu)$ ensures that the reverse
inequality holds for every function in $(\Theta H^2)^{\perp}$. As
already mentioned, when $\mu$ is carried on $\T$ we might not
be able to define $\|f\|_{\mu}$ properly for certain
$f\in (\Theta H^2)^{\perp}$. Still, it will be clear from the proofs 
that the embeddings \eqref{eq:reversed-equation} and \eqref{reversed-inequality}
still hold on the dense subspaces $(\Theta H^2)^{\perp}
\cap C(\D^-)$ under the reverse Carleson inequality even without the 
assumption on the embedding.

Finally we would like to discuss an alternate proof of Aleksandrov's isometric embedding theorem. Our proof uses the theory of de Branges--Rovnyak spaces. 
\begin{Theorem}[(Aleksandrov)]\label{thm:isometric-Theorem}
Suppose $\Theta$ is an inner function and $\mu\in M_+(\T)$.  Then the embedding $(\Theta H^2)^\perp \hookrightarrow L^2(\mu)$ is isometric if and only if 
there is a function $b$ in the closed unit ball of $H^\infty$ such that 
$$\mu = \sigma_{\Theta b}^1,$$
where $\sigma_{\Theta b}^1$ 
is a so-called Aleksandrov-Clark\footnote{It satisfies \eqref{ThmAleksClark} 
with $\varphi$ replaced by $b$, see also Section
\ref{sec:model-spaces}} measure associated with
${\Theta b}$.
\end{Theorem}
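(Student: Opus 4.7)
The plan is to route both implications through the de Branges--Rovnyak space $\mathcal{H}(\Theta b)$, relying on two classical facts from Sarason's theory of sub-Hardy Hilbert spaces. First, because $\Theta$ is inner, the inclusion $(\Theta H^2)^{\perp} = \mathcal{H}(\Theta) \subset \mathcal{H}(\Theta b)$ is isometric for every $b$ in the closed unit ball of $H^{\infty}$. Second, the Aleksandrov--Clark measure $\sigma_{\Theta b}^{1}$ provides an isometric realisation of $\mathcal{H}(\Theta b)$ inside $L^{2}(\sigma_{\Theta b}^{1})$ via non-tangential boundary values on the (dense) subspace of functions admitting such values --- a subspace which contains $(\Theta H^2)^{\perp}$ by the Aleksandrov regularity theorem quoted in the introduction.

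For the sufficiency, assume $\mu = \sigma_{\Theta b}^{1}$ for some $b$ in the closed unit ball of $H^{\infty}$. For $f \in (\Theta H^2)^{\perp}$ the two facts chain together:
\[
\|f\|_{2} \;=\; \|f\|_{\mathcal{H}(\Theta)} \;=\; \|f\|_{\mathcal{H}(\Theta b)} \;=\; \|f\|_{L^{2}(\sigma_{\Theta b}^{1})} \;=\; \|f\|_{\mu},
\]
which is exactly the desired isometric embedding.

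For the necessity, assume $(\Theta H^2)^{\perp} \hookrightarrow L^{2}(\mu)$ is an isometry. Testing and polarising on the reproducing kernels $k_{z}^{\Theta}(\zeta) = (1 - \overline{\Theta(z)}\Theta(\zeta))/(1 - \bar z \zeta)$ yields
\[
\int_{\T} k_{w}^{\Theta}(\zeta)\,\overline{k_{z}^{\Theta}(\zeta)}\, d\mu(\zeta) \;=\; K^{\Theta}(z, w), \qquad z, w \in \D,
\]
where $K^{\Theta}$ is the reproducing kernel of $(\Theta H^{2})^{\perp}$. Setting $H_{\mu}(z) = \int_{\T} \frac{\zeta + z}{\zeta - z}\, d\mu(\zeta)$ and $\psi = (H_{\mu} - 1)/(H_{\mu} + 1)$, one has $\psi$ in the closed unit ball of $H^{\infty}$ (since $\Re H_{\mu} = P\mu \geq 0$). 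I would expand the kernel identity using $|\Theta(\zeta)| = 1$ $\mu$-a.e.\ and translate it into the statement that the kernel difference $K^{\psi}(z, w) - K^{\Theta}(z, w)$ is positive definite, equivalently that $\mathcal{H}(\Theta) \subset \mathcal{H}(\psi)$ isometrically. Sarason's characterisation of isometric inclusions of de Branges--Rovnyak spaces then forces $\psi = \Theta b$ for some $b$ in the closed unit ball of $H^{\infty}$, whence the Herglotz representation of $\mu$ reads $H_{\mu} = (1 + \Theta b)/(1 - \Theta b)$, i.e.\ $\mu = \sigma_{\Theta b}^{1}$.

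The principal obstacle is the extraction of $b$ in the necessity direction: passing from the reproducing kernel identity for $\mu$ to the isometric inclusion $\mathcal{H}(\Theta) \subset \mathcal{H}(\psi)$, and from there to the divisibility $\Theta \mid \psi$ in the closed unit ball of $H^{\infty}$. This is precisely where the de Branges--Rovnyak framework is decisive: it converts a measure-theoretic identity into a concrete factorisation in $H^{\infty}$, bypassing the ad hoc production of the function $\varphi$ appearing in Theorem~\ref{ThmAleks}\,(iii).
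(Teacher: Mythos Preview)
Your route through de Branges--Rovnyak spaces is genuinely different from the paper's. The paper never touches the inclusion $\mathcal{H}(\Theta)\subset\mathcal{H}(\Theta b)$: for sufficiency it approximates $b$ by finite Blaschke products $B_n$, uses Clark's \emph{inner} isometry $(\Theta B_n H^2)^\perp\hookrightarrow L^2(\sigma_{\Theta B_n}^1)$, and passes to the weak-$*$ limit; for necessity it first treats Blaschke products with simple zeros by evaluating $\omega_{b_0}$ at the zeros and showing $b_0$ is constant on them (hence $\Theta\mid\phi_\delta\circ b_0$), then reaches the general case by Frostman shifts and Crofoot transforms. Your structural approach is attractive, but as written both directions have a common unfilled gap.

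The problematic step is the claim that the Aleksandrov--Clark measure ``provides an isometric realisation of $\mathcal{H}(\Theta b)$ inside $L^2(\sigma_{\Theta b}^1)$ via non-tangential boundary values''. The Ball--Clark map $\omega_{\Theta b}^1$ of \eqref{omega-a} goes from $L^2(\sigma_{\Theta b}^1)$ to $\mathcal{H}(\Theta b)$, and its inverse on $H^2(\sigma_{\Theta b}^1)$ is \emph{not} the boundary-value map when $\Theta b$ fails to be inner. Indeed, from $\omega_{\psi}k_\lambda=(1-\overline{\psi(\lambda)})^{-1}k_\lambda^{\psi}$ one gets $(\omega_\psi)^{-1}k_\lambda^{\psi}=(1-\overline{\psi(\lambda)})\,k_\lambda$, whereas $k_\lambda^{\psi}|_{\T}=(1-\overline{\psi(\lambda)}\psi(\zeta))\,k_\lambda(\zeta)$; these agree in $L^2(\sigma_\psi^1)$ only where $\psi(\zeta)=1$, i.e.\ on the singular part (Poltoratski), not on the absolutely continuous part that is present whenever $b$ is not inner. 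So your chain $\|f\|_{\mathcal{H}(\Theta b)}=\|f\|_{L^2(\sigma_{\Theta b}^1)}$ is, as stated, unjustified precisely in the non-inner case you need. You would have to prove separately that $\omega_{\Theta b}^1(f|_\T)=f$ for $f\in K_\Theta$; this may be true, but it is work---essentially equivalent to the sufficiency direction itself.

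In the necessity direction the same issue recurs, and the argument is otherwise too compressed to stand. The passage ``expand the kernel identity using $|\Theta(\zeta)|=1$ $\mu$-a.e.\ and translate it into $K^\psi-K^\Theta\succeq 0$'' is not carried out, and the intermediate claim $|\Theta|=1$ $\mu$-a.e.\ itself needs proof (non-tangential limits of inner functions can be zero). Note also that $K^\psi-K^\Theta\succeq 0$ is equivalent to a \emph{contractive} inclusion $\mathcal{H}(\Theta)\subset\mathcal{H}(\psi)$, not an isometric one; the contractive statement already yields $\psi=\Theta b$ via the Leech/Douglas factorisation, so this slip is harmless---but the derivation of the kernel inequality from the isometric hypothesis is the real missing piece. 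The paper sidesteps all of this by computing at the zeros of $\Theta$, where the kernel identity collapses to an explicit algebraic relation on the values $b_0(\lambda_n)$.
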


\bigskip
In the following section, we recall some useful facts concerning model spaces $(\Theta H^2)^\perp$ and de Branges--Rovnyak spaces $\HH(b)$. In Section~\ref{sec:isometric-embedding}, we prove Theorem~\ref{thm:isometric-Theorem}. Section~\ref{sec:dominating} is devoted to dominating sets, and in Section~\ref{sec:proof-reverse} we prove Theorems~\ref{thm:first-reversed-embedding} and \ref{thm:second-reversed-embedding}.
In the final section we present Baranov's proof of 
Theorem~\ref{thm:first-reversed-embedding} which does not require the
$(CLS)$-condition.

\section{Model spaces and de Branges--Rovnyak spaces}\label{sec:model-spaces}

Before getting underway, let us first gather up some well-known
facts about the model spaces $(\Theta H^2)^{\perp}$ that will be needed later. References for much of this can be found in \cite{CRM}. Model spaces were initially studied as the typical invariant subspaces for the adjoint of the unilateral shift on $H^2$ \cite{DSS} but the subject has expanded in many ways since then. 

By the Nevenlinna theory \cite{Garnett},  an inner function $\Theta$ can be factored as 
$\Theta = e^{i\varphi}B_{\Lambda} S_{\nu}$, where $\varphi$ is a real constant, $B_{\Lambda}$ is a Blaschke product with zeros (repeated according to multiplicity) $\Lambda = (z_{n})_{n \geq 1}$ and
\[
S_{\nu}(z)=\exp\left(-\int_{\mathbb{T}}\frac{\xi+z}{\xi-z}d\nu(\xi)\right),
\]
where $\nu \in M_{+}(\T)$ with $\nu \perp m$, is a singular inner function. 
The \emph{boundary spectrum} of $\Theta$ is the set
\begin{equation} \label{liminf-zero-set}
\sigma(\Theta):=\left\{ \xi\in\mathbb{T}:\;\varliminf_{z\to\xi}\left|\Theta\left(z\right)\right|=0\right\}.\footnote{We should emphasize that this definition does not take into account
the zeros of $\Theta$ inside $\D$.}
\end{equation}
It turns out that $\sigma(\Theta)$ is equal to 
$(\Lambda^{-}\cap\T)\cup\,\mbox{supt}(\nu)$ -- the union of cluster points of $\Lambda$ on $\T$ together with the support of the measure $\nu$ \cite[p. 63]{Niktr}. 
It is well known  \cite{Mo} that $\Theta$, along with every function in $(\Theta H^2)^{\perp}$, has an analytic continuation 
to an open neighborhood of $\T \setminus \sigma(\Theta)$. 

Several authors \cite{AC,Alek,Cohn} have examined the non-tangential limits of functions in $(\Theta H^2)^{\perp}$ near $\sigma(\Theta)$,  where analytic continuation fails. Indeed, let $ADC_{\Theta}$ denote the set of points $\xi \in \T$ where the angular derivative of $\Theta$, in the sense of Carath\'eodory, exists. More specifically,  $\xi \in ADC_{\Theta}$ if the radial limit of $\Theta$ exists and is unimodular and the radial limit of $\Theta'$ exists. We use the notation $|\Theta'(\xi)|$ to denote the modulus of the angular derivative of $\Theta$ at $\xi$ (when it exists). Note that $|\Theta'(\xi)| > 0$ whenever $\xi \in ADC_{\Theta}$. 
 A
result of Ahern-Clark \cite{AC} yields that $\xi\in ADC_{\Theta}$ if and only
if 
\begin{equation}
|\Theta'(\xi)|=\sum_{n \geq 1}\frac{1-\left|z_{n}\right|^{2}}{\left|\xi-z_{n}\right|^{2}}+2\int_{\mathbb{T}}\frac{d\nu\left(\zeta\right)}{\left|\xi-\zeta\right|^{2}}<\infty.\label{eq:existence derivative}
\end{equation}

The model space $(\Theta H^2)^{\perp}$ is a reproducing kernel Hilbert space with kernel 
\begin{equation}\label{eq:defn-nr}
k_{\lambda}^\Theta(z)=\frac{1-\overline{\Theta(\lambda)}\Theta(z)}{1-\overline{\lambda}z},\quad \lambda\in\D.
\end{equation}
These kernels satisfy the reproducing property 
\begin{equation}\label{eq:defn-reproduisant-property}
\langle f, k_{\lambda}^\Theta \rangle_2 = f(\lambda), \quad \forall f \in (\Theta H^2)^{\perp},
\end{equation}
and have norm 
\begin{equation} \label{k-norm}
\|k_{\lambda}^\Theta\|_2 = \sqrt{\frac{1 - |\Theta(\lambda)|^2}{1 - |\lambda|^2}}.
\end{equation}
It is also possible to define $k^\Theta_{\xi}$ for $\xi \in ADC_{\Theta}$. In this case $k^\Theta_{\xi}\in (\Theta H^2)^{\perp}$, every $f \in (\Theta H^2)^{\perp}$ has a finite non-tangential limit $f(\xi)$ at $\xi$, $\langle f, k_{\xi}^{\Theta} \rangle_2 = f(\xi)$, and $k^\Theta_{\xi}$ has norm
$$\|k^\Theta_{\xi}\|_2 = {\sqrt{|\Theta'(\xi)|}}.$$

For $b$ in the closed unit ball of $H^{\infty}$, the associated \emph{de Branges--Rovnyak space} $\HH(b)$ is defined by 
\[
\HH(b) :=(I-T_bT_b^*)^{1/2}H^2,
\]
and is equipped with the following scalar product
\[
\langle (I-T_bT_b^*)^{1/2}f,(I-T_bT_b^*)^{1/2}g\rangle_b=\langle  f,g\rangle_2,
\]
for any $f,g\in (\hbox{ker}(I-T_bT_b^*)^{1/2})^\perp$, making $\HH(b)$ a Hilbert space contractively contained in $H^2$. Here $T_b$ is the Toeplitz operator with symbol $b$, $T_bf=P_+(bf)$, $f\in H^2$ and $P_+:L^2\to H^2$ is the Riesz projection 
(which is not really needed here since our $b$ is assumed to be analytic). 
When $b=\Theta$ is an inner function,  $\HH(\Theta) = (\Theta H^2)^\perp$ with equality of norms. The space $\HH(b)$ is also a reproducing kernel Hilbert space with kernel $k_\lambda^b$ given by the same formula as \eqref{eq:defn-nr} where $\Theta$ is replaced by $b$. Formulas \eqref{eq:defn-reproduisant-property} and \eqref{k-norm} are also valid in this $\HH(b)$ setting.

For each $\alpha\in\mathbb{T}$, 
$$\Re\left(\frac{\alpha + b(z)}{\alpha - b(z)}\right) > 0, \quad  z \in \D,$$
and hence, by Herglotz's
theorem \cite{Duren}, there is a  unique $\sigma_{b}^{\alpha} \in M_{+}(\T)$, called the \emph{Aleksandrov-Clark measure} \cite{CRM,PolSa,Sa} associated
with $b$ and $\alpha$, such that 
\begin{equation} \label{CMdef}
\Re\left(\frac{\alpha+b(z)}{\alpha-b(z)}\right)=\int_\T \frac{1-|z|^{2}}{|\zeta-z|^{2}}d\sigma_{b}^{\alpha}(\zeta),\quad z\in\mathbb{D}.
\end{equation}
The measure $\sigma_b^\alpha$ is singular (with respect to the Lebesgue measure $m$) if and only if $b$ is an inner function. In this case, $\sigma_{b}^{\alpha}$ is often called a \emph{Clark measure} since they were initially studied by Clark \cite{Clark72}.

Conversely, if we start with a positive Borel measure $\mu$ on $\T$, then we can define the analytic function $b$ by the formula
\begin{equation}\label{eq:clark-Theta-1star}
\frac{1+b(z)}{1-b(z)}=\int_\T \frac{\zeta+z}{\zeta-z}\,d\mu(\zeta),\qquad z\in\D.
\end{equation}
Taking the real part of both sides, we see that the resulting function $b$ is an element in the closed unit ball of $H^\infty$, with $-1<b(0)<1$, and  \eqref{CMdef} holds with $\mu=\sigma_b^1$. The special case $b(0)=0$ happens if and only if $\mu$ is a probability measure. 

Moreover, it follows easily from the uniqueness of the representation \eqref{CMdef} that 
\begin{equation}\label{eq:clark-Theta-1}
\sigma_b^\alpha=\sigma_{\bar\alpha b}^1.
\end{equation}

\bigskip
Clark \cite{Clark72}, for the inner case, and Ball \cite{Ball,BaLu76} for the general case  (see also \cite{Sa}), proved  that for each fixed $\alpha \in \T$ the operator 
\begin{equation}\label{omega-a}
\omega_{b}^{\alpha}: L^{2}(\sigma_{b}^{\alpha}) \to \HH(b), \quad
\omega_{b}^{\alpha}h=\left(1-\overline{\alpha}b\right)C_{\sigma_{b}^{\alpha}}h,
\end{equation}
is an onto partial isometry whose kernel is $(H^2(\sigma_b^\alpha))^\perp$, where 
\[
(C_{\sigma_{b}^{\alpha}}h)(z):=\int_{\mathbb{T}}\frac{h(\xi)}{1-\overline{\xi}z}d\sigma_{b}^{\alpha}(\xi), \quad z \in \D,
\]
denotes the Cauchy transform of the measure $h d \sigma_{b}^{\alpha}$ (see \cite{CRM} for more details) and $H^2(\sigma_b^\alpha)$ is the closure of the polynomials in the $L^2(\sigma_b^\alpha)$-norm.

Poltoratskii \cite{Poltoratskii} (see also \cite{CRM}) went on further to show that for each $h \in L^2(\sigma_{b}^{\alpha})$, 
$$\lim_{r \to 1^{-}} (\omega_{b}^{\alpha} h)(r \xi) = h(\xi), \quad (\sigma_{b}^{\alpha})_s\mbox{-a.e.},$$
where $(\sigma_{b}^{\alpha})_s$ is the singular part of $\sigma_b^\alpha$. In particular, if $b=\Theta$ is an inner function, then $\sigma_\Theta^\alpha$ is singular, $H^2(\sigma_\Theta^\alpha)=L^2(\sigma_\Theta^\alpha)$ and  putting this all together, we have the isometric, in fact unitary, embedding $$(\Theta H^2)^{\perp} \hookrightarrow L^2(\sigma_{\Theta}^{\alpha}),$$i.e., 
\begin{equation} \label{eq:clark identity}
\int_{\mathbb{T}}\left|f\right|^{2}d\sigma_{\Theta}^{\alpha}= \|f\|_2^2, \qquad  \forall f \in (\Theta H^2)^{\perp}.
\end{equation}

\bigskip
For $a\in \D$, let
$$\phi_{a}(z) = \frac{a - z}{1 - \ov{a} z}$$
be the (involutive) conformal automorphism of the
disk which sends $a$ to $0$. Straightforward computations \cite{MR1039571} show that 
\begin{equation}
\sigma_{\phi_{a}\circ b}^{\phi_{a}(\alpha)}=\frac{1}{|\phi_{a}^{'}(\alpha)|}\sigma_{b}^{\alpha}\label{eq:bill's cool formula}.
\end{equation}

If $b=\Theta$ is an inner function, the \emph{Crofoot transform}
\begin{equation}\label{Crofoot}
(U_{\Theta}^{a} f)(z) := \frac{\sqrt{1-|a|^{2}}}{1-\overline{a}\Theta(z)}f(z)
\end{equation}
is a unitary operator from $(\Theta H^2)^{\perp}$ onto $((\phi_{a}\circ\Theta)H^2)^{\perp}$ \cite{Crofoot}.

The Clark measure  $\sigma_{\Theta}^{\alpha}$  is carried by the set 
\begin{equation} \label{carrier}
 E_{\alpha}:=\left\{\xi \in \T: \lim_{r \to 1^{-}} \Theta(r \xi) = \alpha\right\}.
\end{equation}
 By this last statement we mean that the $\sigma_{\Theta}^{\alpha}$-measure of the complement of this set is zero. The carrier is not to be confused with the support of $\sigma_{\Theta}^{\alpha}$ -- which is different. A Clark measure $\sigma_{\Theta}^{\alpha}$ has a point mass at $\xi \in \T$ if and only if $|\Theta'(\xi)| < \infty$ and $\Theta(\xi) = \alpha$. Moreover, in this case $\sigma_{\Theta}^{\alpha}(\{\xi\}) = |\Theta'(\xi)|^{-1}$. Thus if $\sigma_{\Theta}^{\alpha}$ is discrete then 
 $$\sigma_{\Theta}^{\alpha} = \sum_{\{\Theta(\xi) = \alpha, |\Theta'(\xi)| < \infty\}} \frac{1}{|\Theta'(\xi)|} \delta_{\xi}.$$ 
When the carrier of $\sigma_{\Theta}^{\alpha}$ in \eqref{carrier} is discrete, say $(\xi_{n})_{n \geq 1} \subset \T$, Clark \cite{Clark72} showed that 
the system 
\begin{equation} \label{CB}
 \left\{\frac{k^\Theta_{\xi_{n}}}{\sqrt{|\Theta'(\xi_n)|}}: n \geq 1 \right\}
 \end{equation}
  forms an orthonormal
basis for $(\Theta H^2)^{\perp}$ -- called the \emph{Clark basis}. In \cite{fricainJFA} it is shown that this situation cannot occur in the general setting of de Branges--Rovnyak spaces.

An inner function $\Theta$ is said to have the \emph{connected level
set} property (written  $\Theta\in\left(CLS\right)$), if there
exists $\eta\in\left(0,1\right)$ such that the sub-level set 
$$L\left(\Theta,\eta\right):=\left\{ z\in\mathbb{D}:\;\left|\Theta\left(z\right)\right|<\eta\right\} $$
is connected. 


If $\Theta\in(CLS)$, Aleksandrov \cite{Alek} showed that $m(\sigma(\Theta)) = 0$
and moreover, for every $\alpha\in\mathbb{T}$, $\sigma_{\Theta}^{\alpha}(\sigma(\Theta)) = 0$.
Since $\mathbb{T}\setminus\sigma\left(\Theta\right)$ is a countable
union of arcs where $\Theta$ continues analytically, the carrier of 
$\sigma_{\Theta}^{\alpha}$, 
\[
\left\{ \xi\in\mathbb{T}\setminus\sigma\left(\Theta\right):\;\Theta\left(\xi\right)=\alpha\right\} 
\]
is a discrete set $(\xi_{n})_{n \geq 1}$. Hence, as discussed above, every $\Theta \in (CLS)$ has a Clark basis \eqref{CB}.

\section{Isometric Embeddings}\label{sec:isometric-embedding}
In this section, we propose an alternate proof of Aleksandrov's
isometric embedding theorem. The main idea is to first prove the result when $\Theta$ is a Blaschke product with simple zeros and then use an approximation argument based on Frostman shifts. \\

{\emph {Proof of Theorem~\ref{thm:isometric-Theorem}.}} The first implication is Aleksandrov's. We flesh out more of the details. First assume that $\mu=\sigma_{\Theta b}^1$ for some function $b$ in the closed unit  ball of $H^\infty$. Then, by Carath\'eodory's theorem (see \cite[p.6]{Garnett}), there exists a sequence of (finite) Blaschke products $(B_n)_{n\geq 1}$ which converges pointwise to $b$. In particular, if for $z\in\D$, we denote by $$P_z(\zeta)= \frac{1-|z|^2}{|\zeta-z|^2}, \quad \zeta\in\T,$$ the associated Poisson kernel, then according to \eqref{CMdef}, we get that 
for every $z\in\D$,
\[
\int_\T P_z(\zeta)\,d\sigma_{\Theta B_n}^1(\zeta)\to \int_\T P_z(\zeta)\,d\mu(\zeta),\qquad n\to +\infty. 
\]
Since $\|\sigma_{\Theta B_n}^1\|$ is bounded (apply \eqref{CMdef} to $b=\Theta B_{n}$ and $z=0$ and use the fact that $(\Theta B_{n})(0) \to \Theta(0)b(0)$)   and since the closed linear span of $\{P_z:z\in\D\}$ is dense in $C(\T)$, we get that $\sigma_{\Theta B_n}^1\to \mu$ in the weak$-\ast$ topology. In others words, for any $f\in C(\T)$, we have 
\begin{equation}\label{eq:convergence-faible*}
\int_\T f(\zeta)\,d\sigma_{\Theta B_n}^1(\zeta)\to \int_\T f(\zeta)\,d\mu(\zeta),\qquad n\to +\infty.
\end{equation}
Moreover, Clark's theorem says that the embedding $(B_n \Theta H^2)^\perp\hookrightarrow L^2(\sigma_{\Theta B_n}^1)$ is isometric.  Since $(\Theta H^2)^\perp \subset (\Theta B_n H^2)^\perp$, we also have the isometric embedding $(\Theta H^2)^\perp \hookrightarrow L^2(\sigma_{\Theta B_n}^1)$. Thus, according to \eqref{eq:convergence-faible*}, for any $f\in (\Theta H^2)^\perp\cap C(\T)$, we have 
\[
\|f\|_2^2=\int_\T |f(\zeta)|^2\,d\sigma_{\Theta B_n}^1(\zeta)\to \int_\T |f(\zeta)|^2\,d\mu(\zeta).
\]
Then, each function $f\in (\Theta H^2)^\perp$ has a finite radial limit at $\mu$-almost every point of $\T$ and we have $\|f\|_2=\|f\|_\mu$. 

\bigskip
Conversely, suppose that $\mu \in M_{+}(\T)$ and suppose that the embedding $(\Theta H^2)^\perp\hookrightarrow L^2(\mu)$ is isometric. In view of \eqref{eq:clark-Theta-1star}, we know that there exists a function $b_0$  in the closed unit ball of $H^\infty$ such that 
\begin{equation} \label{beezero}
\mu=\sigma_{b_0}^1.
\end{equation} 
 We will now show that $\Theta$ divides $b_0$. We first do this when $\Theta=B$ is a Blaschke product with simple zeros $\Lambda=(\lambda_n)_{n\geq 1}$. 
By our discussion of the Clark theory (generalized by Ball) we have the onto partial isometry
$$\omega_{b_0}: L^2(\mu) \to \HH(b_0), \quad \omega_{b_0} h = (1 - b_0) C_{\mu} h.$$
Note that for any $n\geq 1$, 
 $$k_{\lambda_n}^B(z) =k_{\lambda_n}(z):=\frac{1}{1 - \overline{\lambda_n} z} \in (B H^2)^{\perp} \cap C(\D^{-}),$$
which yields, since $k_{\lambda_n}\in H^2(\mu)=(\hbox{ker }\omega_{b_0})^\perp$, 
\begin{eqnarray}\label{wbkl}
\left\langle \omega_{b_0}k_{\lambda_n},\omega_{b_0}k_{\lambda_\ell}\right\rangle_{b_0}=
\left\langle k_{\lambda_n},k_{\lambda_\ell}\right\rangle_{\mu} =
\left\langle k_{\lambda_n},k_{\lambda_\ell}\right\rangle_2 ,\quad n,\ell\geq 1.
\end{eqnarray}
A standard computation (see for instance \cite[III.6]{Sarason}) 
shows that 
\begin{equation*} 
\omega_{b_0}  k_{\lambda_n}
 =(1-\overline{b_0(\lambda_n)})^{-1}k_{\lambda_n}^{b_0},\qquad n\geq 1.
\end{equation*}
Apply formula \eqref{wbkl} to the above identity and use the reproducing property to get 
\begin{eqnarray*}
\left\langle k_{\lambda_n},k_{\lambda_\ell}\right\rangle_2  & = &(1-\overline{b_0(\lambda_n)})^{-1}(1-b_0(\lambda_\ell))^{-1}\langle k_{\lambda_n}^{b_0},k_{\lambda_\ell}^{b_0}\rangle_{b_0} \\
&=& (1-\overline{b_0(\lambda_n)})^{-1}(1-b_0(\lambda_\ell))^{-1} k_{\lambda_n}^{b_0}(\lambda_\ell)\\
&=&(1-\overline{b_0(\lambda_n)})^{-1}(1-b_0(\lambda_\ell))^{-1} (1-\overline{b_0(\lambda_n)}b_0(\lambda_\ell)) k_{\lambda_n}(\lambda_\ell).
\end{eqnarray*}
Since $$\left\langle k_{\lambda_n},k_{\lambda_\ell}\right\rangle_2=k_{\lambda_n}(\lambda_\ell) = (1 - \overline{\lambda_n} \lambda_{\ell})^{-1} \neq 0,$$ we obtain
\[
(1-\overline{b_0(\lambda_n)})(1-b_0(\lambda_\ell))=1-\overline{b_0(\lambda_n)}b_0(\lambda_\ell),
\]
which, after a little algebra, gives us
\[
\overline{b_0(\lambda_n)}-2\overline{b_0(\lambda_n)}b_0(\lambda_\ell)+b_0(\lambda_\ell)=0.
\]
The above can be re--arranged as 
\[
\overline{b_0(\lambda_n)}(1-b_0(\lambda_\ell))=-b_0(\lambda_\ell)(1-\overline{b_0(\lambda_n)}),\qquad (n,\ell\geq 1).
\]
Setting $f:=b_0/\left(1-b_0\right)$, the last equality implies that,
for every $n\geq 1$, 
\begin{equation}\label{eq:f-lambda}
f(\lambda_n)=-\overline{f(\lambda_1)}=c,
\end{equation}
and so
\begin{equation}\label{eq:defdelta}
b_0(\lambda_n)=\frac{c}{1+c}=:\delta,\qquad n\geq 1.
\end{equation}
Setting $\phi_{\delta}\left(z\right):=\left(\delta-z\right)/\left(1-\overline{\delta}z\right)$,
we see that  $\phi_{\delta}\circ b_0$ vanishes on $\Lambda$ and so
$B$, since it has \emph{simple} zeros,  divides $\phi_{\delta}\circ b_0$. This implies the existence of a function $\vartheta$ in the closed unit ball of $H^\infty$ such that $B\vartheta=\phi_{\delta}\circ b_0$. 
Since $\phi_{\delta} \circ \phi_{\delta}$ is the identity we get $b_0=\phi_{\delta}\circ\left(B\vartheta\right)$,
which shows that 
$$\mu = \sigma_{\phi_{\delta \circ (B \vartheta)}}^{1}.$$ 
To finish this off, we use \eqref{eq:bill's cool formula} and \eqref{eq:clark-Theta-1} to get 
$$\mu = \frac{1}{|\phi_{\delta}'(\phi_{\delta}(1))|} \sigma_{B \vartheta}^{\phi_{\delta}(1)} = \frac{1 - |\delta|^2}{|1 - \delta|^2} \sigma_{B \vartheta}^{\phi_{\delta}(1)}=\frac{1-|\delta|^2}{|1-\delta|^2}\sigma_{B b}^1,$$
where $b$ is the function in the closed unit ball of $H^\infty$ defined by $b=\overline{\phi_\delta(1)}\vartheta$ (note that $|\phi_\delta(1)|=1$). Finally, using \eqref{eq:f-lambda}, we easily see that $c$ is imaginary. Thus
from the definition of $\delta$ in \eqref{eq:defdelta} we have
\[
\frac{1-|\delta|^2}{|1-\delta|^2}=1,
\]
which concludes the proof in the case when $\Theta=B$ is a Blaschke product with simple zeros.  

In the general case, the nice little fact that will contribute here is Frostman's result 
\cite{Frostman-thesis} (see also \cite{MR0231999}) which says that
for an arbitrary inner function $\Theta$,  the \emph{Frostman shift} 
$\phi_{\lambda} \circ \Theta$ is a Blaschke product for every $\lambda \in \D$ with 
the possible exception of a set of logarithmic capacity zero. 
This result can be refined to fit to our situation: $\phi_{\lambda} \circ \Theta$ is a Blaschke product with {\it 
simple} zeros for all $\lambda \in \D$ with the possible exception of a set of Lebesgue area measure zero~
\cite[p.~677]{Fricain}.  So let $(\lambda_n)_{n\geq 1}$ be a sequence in $\mathbb{D}$, $\lambda_n\to 0$, 
such that $B_n:=\phi_{\lambda_n}\circ(-\Theta)$ is a Blaschke product with simple zeros. A trivial estimate shows that 
\begin{equation}\label{eq:Frostman-approximation}
\|B_n-\Theta\|_{\infty}\leq \frac{2|\lambda_n|}{1-|\lambda_n|}\to 0,\quad n\to +\infty.
\end{equation}
Fix $n\geq 1$. Since $\phi_{\lambda_n}\circ \phi_{\lambda_n}$ is the identity, remember from  \eqref{Crofoot} the Crofoot transform $U_{B_n}^{\lambda_n}$ which is a unitary operator from $(B_n H^2)^{\perp}$ onto $(\Theta H^2)^{\perp}$. If we define $\mu'_n \in M_{+}(\T)$ by 
\begin{equation} \label{mu-mu-prime}
d\mu_n':=\frac{1-\left|\lambda_n\right|^{2}}{\left|1-\overline{\lambda_n}B_n\right|^{2}}d\mu,
\end{equation}
then $(B_n H^2)^\perp$ embeds  isometrically into $L^{2}(\mu'_n)$.
Indeed, for $g \in (B_n H^2)^{\perp}$, note that $U_{B_n}^{\lambda_n} g\in (\Theta H^2)^\perp$ and $(\Theta H^2)^\perp\hookrightarrow L^2(\mu)$ isometrically. Thus, 
\[
\int_\T |g|^2 dm=\int_\T |U^{\lambda_n}_{B_n}g|^{2}dm=\int_\T |U^{\lambda_n}_{B_n}g|^{2}d\mu=\int_\T |g|^{2}d\mu'_n.
\]
By the first case of the proof, there is a function $b_n$ in the closed unit ball of $H^{\infty}$ such that $\mu'_n=\sigma_{B_n b_n}^1$, that is
\begin{equation} \label{proper-form}
\mu=\frac{|1-\overline{\lambda_n}B_n|^2}{1-|\lambda_n|^2} \sigma_{B_n b_n}^1,\quad n\geq 1.
\end{equation}
By the Banach-Alaoglu theorem, there is a subsequence $(b_{n_\ell})_{\ell\geq 1}$ and a function $b$ in the closed unit ball of $H^\infty$ such that $b_{n_\ell}$ converges to $b$ in the weak-$*$ topology, which means that 
\[
\int_\T  f b_{n_\ell} \,dm \to \int_\T f b \,dm ,\quad \ell\to +\infty,
\]
for any $f\in L^1(\T)$. In particular, for any $z\in\D$, $b_{n_\ell}(z)\to b(z)$ as $\ell\to +\infty$. Now we argue that $\mu=\sigma_{\Theta b}^1$. Indeed according to \eqref{proper-form}, we have
\begin{eqnarray*}
\int_\T P_z(\zeta)\,d\mu(\zeta)&=&\int_\T P_z(\zeta)\left(\frac{|1-\overline{\lambda_{n_\ell}}B_{n_\ell}(\zeta)|^2}{1-|\lambda_{n_\ell}|^2}-1\right)\,d\sigma_{B_{n_\ell}b_{n_\ell}}^1(\zeta)+\\
&+&\int_\T P_z(\zeta)\,d\sigma_{B_{n_\ell}b_{n_\ell}}^1(\zeta).
\end{eqnarray*}
Observe that 
\[
\frac{|1-\overline{\lambda_n}B_n(\zeta)|^2}{1-|\lambda_n|^2}-1\to 0,\quad n\to +\infty,
\]
uniformly on $\T$ and, according to \eqref{eq:Frostman-approximation}, $B_{n_\ell}$ tends to $\Theta$ uniformly on $\mathbb{D}^{-}$ and $b_{n_\ell}(z)\to b(z)$, $z\in\D$, as $\ell\to +\infty$. Hence,
\[
\int_\T P_z(\zeta)\,d\sigma_{B_{n_\ell}b_{n_\ell}}^1(\zeta)=\frac{1-|B_{n_\ell}(z)b_{n_\ell}(z)|^2}{|1-B_{n_\ell}(z)b_{n_\ell}(z)|^2}
\]
tends to 
\[
\frac{1-|\Theta(z)b(z)|^2}{|1-\Theta(z)b(z)|^2}=\int_\T P_z(\zeta)\,d\sigma_{\Theta b}^1(\zeta).
\]
Thus, we obtain that for any $z\in \D$, 
\[
\int_\T P_z(\zeta)\,d\mu(\zeta)=\int_\T P_z(\zeta)\,d\sigma_{\Theta b}^1(\zeta).
\]
But the closed linear span of $\{P_z:z\in\D\}$ is dense in $C(\T)$, which yields that 
\[
\mu=\sigma_{\Theta b}^1,
\]
and thus concludes the proof. 
\hfill $\square$

\section{Dominating Sets}\label{sec:dominating}

In this section, we introduce and discuss the notion of dominating
sets for $(\Theta H^2)^{\perp}$ where $\Theta$ is a non-constant
inner function. This terminology is perhaps reminiscent of the concept of a dominating sequence for $H^{\infty}$  \cite{BSZ}. 

\begin{Definition}
A (Lebesgue) measurable
subset $\Sigma\subset\mathbb{T}$, with $m\left(\Sigma\right)<1$,
is called a \emph{dominating set} for $(\Theta H^2)^{\perp}$ if 
$$\|f\|_2^{2} \lesssim \int_{\Sigma} |f|^2 dm, \quad \forall f \in (\Theta H^2)^{\perp}.$$
\end{Definition}

Necessarily, in the above definition, $m(\Sigma)>0$.
Also notice that the hypothesis $m(\Sigma) < 1$ is crucial since
otherwise the whole matter becomes trivial. Another observation
is that if $\Sigma$ is a measurable subset of $\T$ such that $m(\Sigma)<1$, then $\Sigma$ is a dominating set if and only if the measure $d \mu = \chi_{\Sigma} dm$ yields a reverse embedding for $(\Theta H^2)^{\perp}$ (and hence equivalent norms).  Therefore, by Volberg's theorem
(Theorem~\ref{thm:Volberg}), 
we obtain a criterion for dominating sets $\Sigma$ in terms of the harmonic continuation of $\chi_\Sigma$ into the open unit disc. However, this criterion is not so easy to deal with. 

One aim of this section is to give simpler necessary or sufficient conditions for dominating sets. More precisely, we would like to highlight some interesting relationship between dominating sets for $(\Theta H^2)^\perp$ and the spectrum of the inner function $\Theta$. Indeed, a simple look at the definition of dominating set tells us that such
a set cannot be too `far' from  the spectrum or from
the points where the modulus of the angular derivative is infinite. We make this more precise below.

It is a remarkable fact that dominating sets always exist. The idea
of the proof of this fact was pointed out to us by V.~Kapustin after
submission of this paper. Before giving his proof at the end of this
section we will present a nice
construction of a dominating set when $\sigma(\Theta)=\T$ based
on Smith-Volterra-Cantor sets.

But first we discuss the relation between dominating sets
and $\sigma(\Theta)$. For this we need some notation: when
$A$, $B$ are sets and $x$ is a point, we set 
\begin{eqnarray*}
d(A, B) &:=& \inf\{|a - b|: a \in A, b \in B\},\\
d(x, A) &:=& d(\{x\},A).
\end{eqnarray*}

\begin{Proposition}
\label{prop dist dom. to ADC}
If $\Sigma$ is a dominating set for
$(\Theta H^2)^{\perp}$, and $\zeta\in \mathbb{T}\setminus ADC_{\Theta}$
then
$d(\zeta,\Sigma)=0$.
\end{Proposition}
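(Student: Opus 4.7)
The plan is to argue by contradiction, using normalized reproducing kernels as test functions. Suppose $\delta:=d(\zeta,\Sigma)>0$. The idea is that reproducing kernels $k_{z}^{\Theta}$ at points $z$ approaching a non-$ADC$ boundary point blow up in norm, while at the same time they are uniformly bounded on any set kept at a positive distance from $\zeta$. Once normalized to have unit norm in $(\Theta H^2)^\perp$, they will have vanishingly small $L^2(\Sigma)$-mass, contradicting the dominating estimate.

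More concretely, I would first choose the radial sequence $z_{n}:=(1-1/n)\zeta$ and show that $\|k_{z_n}^{\Theta}\|_{2}^{2}\to\infty$. By \eqref{k-norm} this amounts to $(1-|\Theta(z_n)|^{2})/(1-|z_n|^{2})\to\infty$, which is exactly what the Ahern--Clark criterion \eqref{eq:existence derivative} together with Julia--Carath\'eodory guarantees when $\zeta\notin ADC_{\Theta}$: the liminf of that ratio is forced to be $+\infty$. Next, using $|1-\overline{z}w|=|w-z|$ for $w\in\T$ and $|\Theta|\le 1$, one has the trivial pointwise bound
\[
|k_{z}^{\Theta}(w)|\le\frac{2}{|w-z|},\qquad w\in\T,\;z\in\D.
\]
For $n$ large enough that $|z_{n}-\zeta|<\delta/2$ and any $w\in\Sigma$, the triangle inequality gives $|w-z_{n}|\ge\delta/2$, hence $|k_{z_n}^{\Theta}(w)|\le 4/\delta$ on $\Sigma$, and therefore
\[
\int_{\Sigma}|k_{z_n}^{\Theta}|^{2}\,dm\le\frac{16}{\delta^{2}}\,m(\Sigma)\le\frac{16}{\delta^{2}}.
\]

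Dividing by $\|k_{z_n}^{\Theta}\|_{2}^{2}$, the normalized kernel $\tilde k_{z_n}^{\Theta}:=k_{z_n}^{\Theta}/\|k_{z_n}^{\Theta}\|_{2}$ lies in the unit sphere of $(\Theta H^{2})^{\perp}$ and satisfies
\[
\int_{\Sigma}|\tilde k_{z_n}^{\Theta}|^{2}\,dm\le\frac{16/\delta^{2}}{\|k_{z_n}^{\Theta}\|_{2}^{2}}\longrightarrow 0.
\]
Applied to $f=\tilde k_{z_n}^{\Theta}$, the dominating inequality forces a uniform lower bound $1\lesssim \int_{\Sigma}|\tilde k_{z_n}^{\Theta}|^{2}\,dm$, contradicting the above. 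Hence $\delta=0$.

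The only delicate point is Step 1, namely the identification $\lim_{z\to\zeta}\|k_{z}^{\Theta}\|_{2}^{2}=+\infty$ when $\zeta\notin ADC_{\Theta}$. This is not a calculation but a direct appeal to the Julia--Carath\'eodory / Ahern--Clark machinery already recalled in the preliminary section; every other ingredient is elementary. No use of the $(CLS)$ hypothesis or Clark measure theory is needed here, which makes the proposition a clean necessary condition to contrast with the existence results proved later in the section.
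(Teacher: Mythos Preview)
Your proof is correct and essentially identical to the paper's: both argue by contradiction, take reproducing kernels $k_{\lambda_n}^{\Theta}$ at radial points approaching $\zeta$, bound them uniformly on $\Sigma$ via the pointwise estimate $|k_{\lambda}^{\Theta}(\xi)|\le 2/|\xi-\lambda|$ combined with $d(\lambda_n,\Sigma)\gtrsim 1$, and obtain a contradiction with the Ahern--Clark fact that $\|k_{\lambda_n}^{\Theta}\|_2^2=(1-|\Theta(\lambda_n)|^2)/(1-|\lambda_n|^2)\to\infty$ when $\zeta\notin ADC_{\Theta}$. The only cosmetic difference is that the paper phrases the contradiction as ``$\|k_{\lambda_n}^{\Theta}\|_2^2\lesssim 1$ yet $\to\infty$'' while you normalize first and get ``$\int_\Sigma|\tilde k_{z_n}^{\Theta}|^2\,dm\to 0$ yet $\gtrsim 1$''.
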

\begin{proof}
Let $\zeta \in \mathbb{T}$ such that 
$|\Theta'(\zeta)| = \infty$ and $(\lambda_{n})_{n \geq 1} \subset \mathbb{D}$ with radial limit  $\zeta$.
Let us suppose, towards a contradiction,  that $d(\zeta,\Sigma)>0$. Then, for $n$
large enough, we see that $d(\lambda_{n},\Sigma)\gtrsim1$.
If we consider the reproducing kernels 
$k_{\lambda_{n}}^{\Theta}\in (\Theta H^2)^{\perp}$, the defining property of $\Sigma$ along with \eqref{k-norm}
gives us
\begin{eqnarray*}
\frac{1-|\Theta(\lambda_{n})|^{2}}{1-|\lambda_{n}|^{2}} & = & \int_{\mathbb{T}}|k_{\lambda_{n}}^{\Theta}|^{2}dm
  \lesssim \int_{\Sigma}|k_{\lambda_{n}}^{\Theta}|^{2}dm\\
 & \lesssim & \int_{\Sigma}\left|\frac{\Theta(\xi)-\Theta(\lambda_{n})}{\lambda_{n}-\xi}\right|^{2}dm(\xi)\\
 & \lesssim & \frac{1}{\text{d}(\lambda_{n},\Sigma)^2}\\
 & \lesssim & 1,
\end{eqnarray*}
whereas, by Ahern-Clark \cite{AC}, 
\[
\frac{1-\left|\Theta\left(\lambda_{n}\right)\right|^{2}}{1-\left|\lambda_{n}\right|^{2}}\longrightarrow\infty,\quad n\to\infty,
\]
This yields the required contradiction.
\end{proof}

The following consequence is obvious.

\begin{Corollary}
If $\Sigma$ is a dominating set for $(\Theta H^2)^{\perp}$, then
$$d(\Sigma,\T\setminus\ADC_{\Theta})=0.$$
\end{Corollary}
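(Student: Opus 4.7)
The corollary follows directly from Proposition \ref{prop dist dom. to ADC} by simply unpacking the definition of the set-to-set distance. Indeed, by the definition of $d(\cdot,\cdot)$ recalled just before the proposition,
$$d(\Sigma, \T \setminus \ADC_\Theta) = \inf\{|a-b| : a \in \Sigma,\ b \in \T \setminus \ADC_\Theta\} = \inf_{\zeta \in \T \setminus \ADC_\Theta} d(\zeta, \Sigma).$$
The proposition asserts that $d(\zeta, \Sigma) = 0$ for every $\zeta \in \T \setminus \ADC_\Theta$. Plugging this into the right-hand side shows that the infimum is zero, which is exactly the desired equality.

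The only minor caveat is the assumption that $\T \setminus \ADC_\Theta$ is nonempty, which is automatic in all situations of interest: whenever $\Theta$ has a nontrivial singular factor or a Blaschke part with infinitely many zeros clustering on $\T$, the Ahern--Clark criterion \eqref{eq:existence derivative} fails at some point of $\T$, so there exist points of $\T$ where the angular derivative is infinite. In the degenerate case where $\Theta$ is a finite Blaschke product one has $\ADC_\Theta = \T$, and the statement is then vacuous under the convention that an infimum over the empty set is infinite (or, alternatively, the corollary may simply be ignored in that setting). No serious obstacle arises, so the corollary is essentially a restatement of the proposition.
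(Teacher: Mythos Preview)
Your proof is correct and matches the paper's own treatment, which simply declares the corollary ``obvious'' as an immediate consequence of Proposition~\ref{prop dist dom. to ADC}.

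One small inaccuracy in your side remark: it is \emph{not} true that an infinite Blaschke part forces $\T\setminus\ADC_\Theta\neq\varnothing$. The paper's own Example~\ref{ExamBlProd1} exhibits an infinite Blaschke product $B$ with zeros clustering at $1$ for which $\ADC_B=\T$. So the vacuous case can occur even for non-finite Blaschke products. This does not affect the argument for the corollary itself, which the paper (and you) correctly treat as a direct restatement of the proposition via the definition of $d(\cdot,\cdot)$.
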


Now, since $\mathbb{T}\setminus ADC_{\Theta}$ is a subset of the spectrum
$\sigma\left(\Theta\right)$, the previous corollary directly implies
the following one. 
\begin{Corollary}
If $\Sigma$ is a dominating set for $(\Theta H^2)^{\perp}$,  then
$$d(\Sigma,\sigma(\Theta))=0.$$
\end{Corollary}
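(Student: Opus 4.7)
The plan is to observe that this corollary follows from the preceding one by pure set-theoretic monotonicity of the distance function. The previous corollary gives $d(\Sigma, \T \setminus \ADC_\Theta) = 0$, so it suffices to establish the inclusion $\T \setminus \ADC_\Theta \subset \sigma(\Theta)$, after which monotonicity of $d$ in its second argument finishes the job.

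To verify the inclusion, I would argue by contraposition: if $\xi \in \T \setminus \sigma(\Theta)$, then by the very definition of $\sigma(\Theta)$ (together with the fact recalled in Section~\ref{sec:model-spaces} that $\Theta$ admits an analytic continuation to a neighborhood of $\T \setminus \sigma(\Theta)$), $\Theta$ is analytic across $\xi$ and $|\Theta(\xi)| = 1$. In particular $\Theta'(\xi)$ exists as an ordinary complex derivative, so the radial limits of both $\Theta$ and $\Theta'$ at $\xi$ exist and $|\Theta(\xi)| = 1$; this is precisely the defining condition for membership in $\ADC_\Theta$. Hence $\xi \in \ADC_\Theta$, which proves $\T \setminus \ADC_\Theta \subset \sigma(\Theta)$.

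With this inclusion in hand, monotonicity of the distance function yields
\[
d(\Sigma, \sigma(\Theta)) \;\leq\; d(\Sigma, \T \setminus \ADC_\Theta) \;=\; 0,
\]
where the last equality is the content of the previous corollary. There is no real obstacle here: the only thing one has to be careful about is not to confuse the set-theoretic spectrum $\sigma(\Theta)$ (which ignores interior zeros of $\Theta$, as flagged in the footnote) with the complementary set on which $\Theta$ extends analytically — but this is exactly the characterization already stated in the paper, so no further work is needed.
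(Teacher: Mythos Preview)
Your proof is correct and follows the same route as the paper: the paper simply notes that $\T\setminus\ADC_\Theta\subset\sigma(\Theta)$ and invokes the previous corollary, exactly as you do. Your extra sentence justifying the inclusion via analytic continuation across $\T\setminus\sigma(\Theta)$ is a welcome clarification of what the paper leaves implicit.
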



We will see in Corollary~\ref{thm:dense-dominating-set} 
that if the spectrum of the inner function is the whole circle,
then any dominating set must be dense. However, dense sets
can be of measure zero. Here is a little fact that states that any neighborhood
of a point in the spectrum has to contain portions of the dominating set  $\Sigma$ with 
non negligible Lebesgue measure. 
First let us introduce the \emph{Privalov shadow}: for
$\lambda\in \D$ and $\alpha>0$, let $I_{\lambda}^{\alpha}$ be the
arc in $\T$ centered at $\lambda/|\lambda|$ with length
$\alpha(1-|\lambda|)$.

For $\alpha=1$ we simply write $I_{\lambda}:=I_{\lambda
}^1$, (see Figure \ref{fig:privalov}). Observe
that $I_{\lambda}^{\alpha}$ is the $\alpha$-amplification of $I_{\lambda}$.
\begin{center}
\begin{figure}
\includegraphics[bb=20bp 600bp 400bp 800bp,clip,scale=0.8]{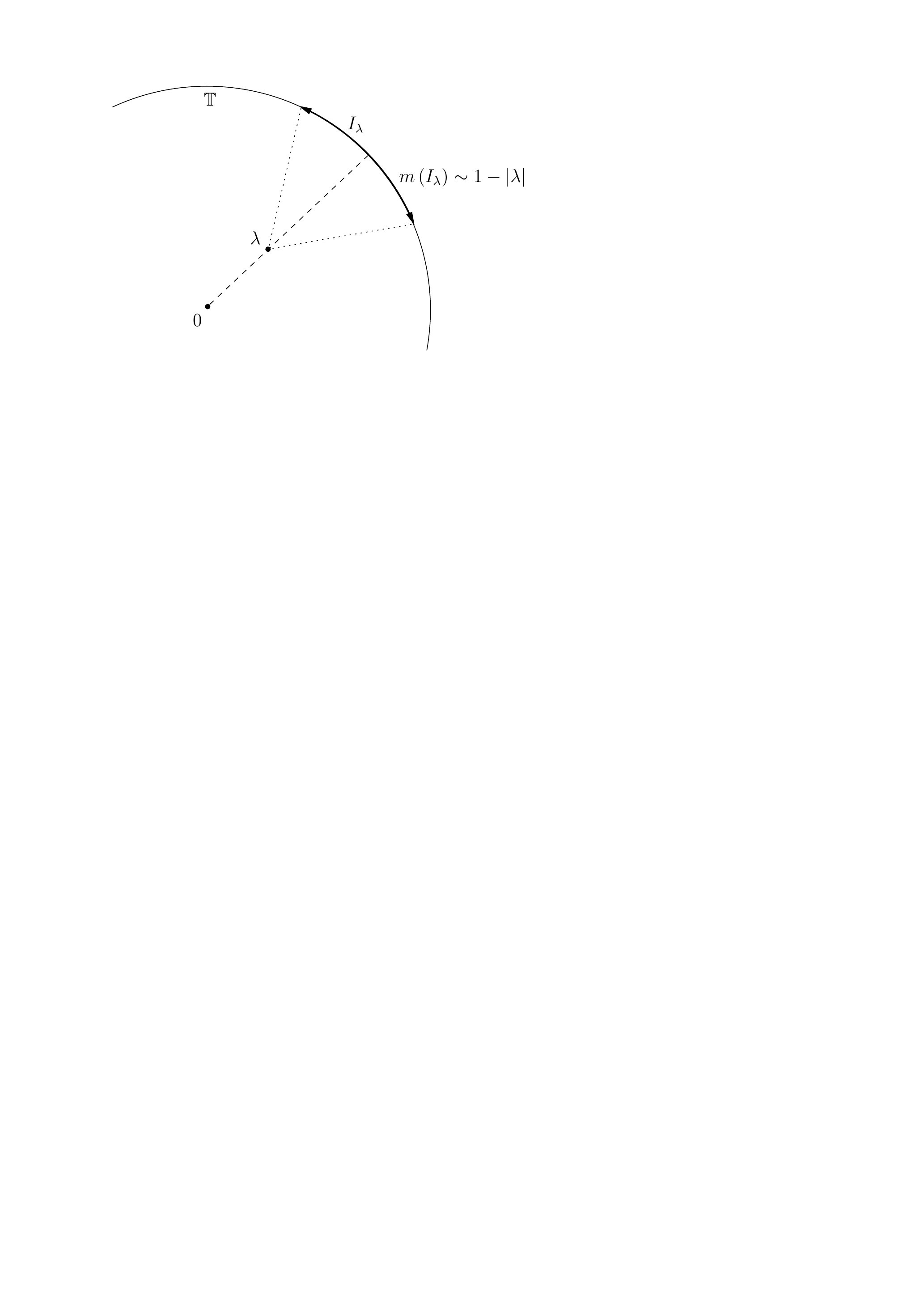}
\caption{\label{fig:privalov}Privalov shadow}
\end{figure}
\end{center}
\begin{Proposition}
Let $\zeta\in\sigma(\Theta)$ and $\Sigma$ dominating. Then there exists an
$\alpha>0$ such that for every sequence $\lambda_n\to \zeta$ with 
$\Theta(\lambda_n)
\to 0$, there is an integer $N$ with
\[
 m(\Sigma\cap I_{\lambda_n}^{\alpha})\gtrsim m(I_{\lambda_n}^{\alpha}),
 \quad n\ge N.
\]
\end{Proposition}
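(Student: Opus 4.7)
The natural test functions here are the reproducing kernels $k_{\lambda_n}^\Theta$, and the plan is to play off their $L^2$-norm on $\T$ against their $L^2$-norm on $\Sigma$, split according to whether one is close to or far from $\lambda_n/|\lambda_n|$. Concretely, since $\Sigma$ dominates there is a constant $C>0$ such that $\|f\|_2^2 \le C\int_\Sigma|f|^2\,dm$ for all $f\in(\Theta H^2)^\perp$, and specializing to $f=k_{\lambda_n}^\Theta$ combined with \eqref{k-norm} gives
\[
\frac{1-|\Theta(\lambda_n)|^2}{1-|\lambda_n|^2}\;\le\;C\int_\Sigma|k_{\lambda_n}^\Theta|^2\,dm.
\]
Because $\Theta(\lambda_n)\to 0$, for $n$ large the left side is bounded below by a constant multiple of $1/(1-|\lambda_n|)$.

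The idea for the right side is to split $\Sigma=(\Sigma\cap I_{\lambda_n}^\alpha)\cup(\Sigma\setminus I_{\lambda_n}^\alpha)$ and show that the second piece becomes negligible once $\alpha$ is taken large enough, uniformly in $n$. Using the elementary estimate
\[
|1-\overline{\lambda_n}\xi|^2 \;\gtrsim\; (1-|\lambda_n|)^2+(\theta-\theta_n)^2,
\qquad \xi=e^{i\theta},\ \lambda_n=r_ne^{i\theta_n},
\]
together with the trivial bound $|1-\overline{\Theta(\lambda_n)}\Theta(\xi)|\le 2$, a change of variables $u=(\theta-\theta_n)/(1-|\lambda_n|)$ yields
\[
\int_{\T\setminus I_{\lambda_n}^\alpha}|k_{\lambda_n}^\Theta|^2\,dm\;\le\;\frac{c(\alpha)}{1-|\lambda_n|},
\]
where $c(\alpha)\to 0$ as $\alpha\to\infty$ (the tail of $\int du/(1+u^2)$). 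Choosing $\alpha=\alpha(C)$ so large that $c(\alpha)\le 1/(4C)$, this contribution can absorb at most half of the lower bound on $\|k_{\lambda_n}^\Theta\|_2^2$.

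With such $\alpha$ fixed, the remaining integral over $\Sigma\cap I_{\lambda_n}^\alpha$ must carry the rest. Since $|1-\overline{\lambda_n}\xi|\ge 1-|\lambda_n|$ everywhere on $\T$, one has the pointwise bound $|k_{\lambda_n}^\Theta(\xi)|^2\le 4/(1-|\lambda_n|)^2$ on $I_{\lambda_n}^\alpha$, hence
\[
\int_{\Sigma\cap I_{\lambda_n}^\alpha}|k_{\lambda_n}^\Theta|^2\,dm\;\le\;\frac{4\,m(\Sigma\cap I_{\lambda_n}^\alpha)}{(1-|\lambda_n|)^2}.
\]
Combining the three inequalities gives $m(\Sigma\cap I_{\lambda_n}^\alpha)\gtrsim (1-|\lambda_n|)$, which is $\gtrsim m(I_{\lambda_n}^\alpha)/\alpha$; since $\alpha$ is now a fixed constant, this is the desired conclusion. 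The only delicate point is calibrating the trade-off in step two: one needs $c(\alpha)$ to be small relative to the dominating constant $C$ before the $n$-dependence enters, but since both sides scale the same way in $1-|\lambda_n|$, this is just a matter of choosing $\alpha$ appropriately large up front.
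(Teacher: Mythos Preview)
Your argument is correct and is essentially the paper's own proof: test the dominating inequality on $k_{\lambda_n}^\Theta$, split $\Sigma$ into $\Sigma\cap I_{\lambda_n}^\alpha$ and its complement, choose $\alpha$ large so the Poisson-type tail is negligible, and then use the trivial pointwise upper bound on the kernel over $I_{\lambda_n}^\alpha$ to extract the measure estimate. The only cosmetic difference is that the paper first passes from $k_{\lambda_n}^\Theta$ to the Szeg\H{o} kernel $k_{\lambda_n}$ (since $|\Theta(\lambda_n)|\le 1/2$ for large $n$), whereas you keep $k_{\lambda_n}^\Theta$ and bound the numerator by $2$; both lead to the same estimates.
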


\begin{proof}
Since $\Sigma$ is dominating we have
\[
 c\le \frac{\int_{\Sigma}|k_{\lambda_n}^{\Theta}|^2dm}
 {\|k_{\lambda_n}^{\Theta}\|_2^2}.
\] 
Now, since $\Theta(\lambda_n)\to 0$, there exists an $N$ such that
$|\Theta(\lambda_n)|\le 1/2$ for $n\ge N$, so that in the above inequality
we can replace $k_{\lambda_n}^{\Theta}$ by $k_{\lambda_n}$, $n\ge N$. Hence (with a change of constant from $c$ to $c_1$) for $n\ge N$,
\[
 c_1\le \int_{\Sigma}\frac{1-|\lambda_n|^2}{|\xi-\lambda_n|^2}
 dm(\xi).
\]
Now with an appropriate choice of $\alpha$ we have
\[
 \int_{\T\setminus I_{\lambda_n}^{\alpha}}
 \frac{1-|\lambda_n|^2}{|\xi-\lambda_n|^2}dm(\xi)<\frac{c_1}{2}.
\]
Hence
\begin{eqnarray*}
\lefteqn{ c_1\le \int_{\Sigma}\frac{1-|\lambda_n|^2}{|\xi-\lambda_n|^2} dm(\xi)}\\
 &&=\int_{\Sigma\cap I_{\lambda_n}^{\alpha}}\frac{1-|\lambda_n|^2}{|\xi-\lambda_n|^2}  
  dm(\xi) +
 \int_{\Sigma \cap(\T\setminus I_{\lambda_n}^{\alpha})}\frac{1-|\lambda_n|^2}{|\xi-\lambda_n|^2} dm(\xi)\\
 &&\le \int_{\Sigma\cap I_{\lambda_n}^{\alpha}}\frac{1-|\lambda_n|^2}
 {|\xi-\lambda_n|^2}   dm(\xi) +\frac{c_1}{2}\\
 &&\lesssim\frac{m({\Sigma\cap I_{\lambda_n}^{\alpha}})}{{1-|\lambda_n|^2}}
 +\frac{c_1}{2},
\end{eqnarray*}
which yields the desired conclusion.
\end{proof}

While the general existence result on dominating sets will be
discussed later --- based on Kapustin's ideas --- we include an
existence proof in the easier situation when
the spectrum of the inner function $\Theta$ is not the whole unit circle. 
Here are two results in this direction. 


\begin{Proposition}\label{thm:neighborhood dom.}
Let $\Theta$ be an inner function and assume that $\sigma(\Theta)\neq\T$. Then $(\Theta H^2)^\perp$ has dominating sets. More precisely, if $\Sigma$ is an open subset of $\T$ such that $\sigma(\Theta)\subset\Sigma$ and $m(\Sigma)<1$, then $\Sigma$ is a dominating set for $(\Theta H^2)^\perp$. 
\end{Proposition}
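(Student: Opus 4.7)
The plan is to derive the proposition as a direct application of Volberg's theorem (Theorem~\ref{thm:Volberg}) to the bounded weight $w = \chi_\Sigma \in L^\infty(\T)$. Writing $d\mu := \chi_\Sigma \, dm$, the upper Carleson-type inequality $\|f\|_\mu \le \|f\|_2$ is trivial for every $f \in (\Theta H^2)^\perp$, so establishing equivalence of the norms $\|\cdot\|_\mu$ and $\|\cdot\|_2$ on $(\Theta H^2)^\perp$ is exactly the dominating set property we want. It therefore suffices to check item (iii) of Volberg's theorem, namely
$$\inf_{\lambda \in \D} \bigl(\widehat{\chi_\Sigma}(\lambda) + |\Theta(\lambda)|\bigr) > 0.$$

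I would verify this by contradiction. Assume there exists a sequence $(\lambda_n) \subset \D$ such that both $\widehat{\chi_\Sigma}(\lambda_n) \to 0$ and $|\Theta(\lambda_n)| \to 0$. Since $\Sigma$ is a non-empty open subset of $\T$ and hence has positive Lebesgue measure, the Poisson integral $\widehat{\chi_\Sigma}(\lambda) = \int_\Sigma P_\lambda \, dm$ is bounded below by a strictly positive constant on every compact subset of $\D$. This forces $|\lambda_n| \to 1$. Extracting a subsequence, we may assume $\lambda_n \to \zeta$ for some $\zeta \in \T$, and the very definition \eqref{liminf-zero-set} of the boundary spectrum together with $|\Theta(\lambda_n)| \to 0$ places $\zeta$ in $\sigma(\Theta)$.

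This is where the hypotheses on $\Sigma$ close the argument. Because $\sigma(\Theta) \subset \Sigma$ and $\Sigma$ is open, there is an open arc $J$ with $\zeta \in J \subset \Sigma$. The standard concentration of the Poisson kernel at $\zeta$ then gives
$$\widehat{\chi_\Sigma}(\lambda_n) \ge \int_J P_{\lambda_n}(\xi) \, dm(\xi) \longrightarrow 1 \qquad (n \to \infty),$$
which contradicts $\widehat{\chi_\Sigma}(\lambda_n) \to 0$. This establishes condition (iii), so Volberg's theorem delivers the dominating property of $\Sigma$.

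The proof is essentially a topological observation feeding into Volberg's criterion, so there is no real obstacle. The only delicate point is recognising that a liminf-zero of $|\Theta|$ at a boundary cluster point $\zeta$ of $(\lambda_n)$ forces $\zeta \in \sigma(\Theta)$, and that the \emph{openness} of $\Sigma$ (rather than mere containment of $\sigma(\Theta)$) is what upgrades this to $\widehat{\chi_\Sigma}(\lambda_n) \to 1$.
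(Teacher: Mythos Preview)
Your argument is correct and essentially parallel to the paper's. The only cosmetic difference is that the paper verifies Volberg's condition (ii) (if $\widehat{\chi_\Sigma}(\lambda_n)\to 0$ then $|\Theta(\lambda_n)|\to 1$) by showing the cluster point lies outside $\sigma(\Theta)$, whereas you verify condition (iii) directly; in both cases the core step is the same Poisson-kernel concentration estimate at an interior point of the open set $\Sigma$.
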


\begin{proof}
Since $\sigma(\Theta)$ is a closed subset of $\T$, and $\sigma(\Theta)\neq\T$, 
we can find an open subset $\Sigma$ of $\T$ such that $\sigma(\Theta)\subset\Sigma$ and $m(\Sigma)<1$.  We now prove that this open set is a dominating set for $(\Theta H^2)^\perp$. As already noticed, this is equivalent to saying if $d \mu = \chi_{\Sigma} dm$ the norms $\|\cdot\|_{\mu}$ and 
$\|\cdot\|_2$ are equivalent on $(\Theta H^2)^\perp$. By Volberg's result we will show that if $\lambda_n\in\D$ is such that $\lim_{n\to +\infty}\widehat{\chi_\Sigma}(\lambda_n)=0$, then $\lim_{n\to +\infty}|\Theta(\lambda_n)|=1$. To do this, we first note that
\begin{equation}\label{eq:harmonic-prolongement}
\widehat{\chi_\Sigma}(\lambda_n)=\int_\Sigma \frac{1-|\lambda_n|^2}{|1-\bar\lambda_n \zeta|^2}\,dm(\zeta).
\end{equation}
Let $\zeta_0\in\T$ be such that some subsequence $\lambda_{n_\ell}\to \zeta_0$. It suffices to show that $\zeta_0\in\T\setminus\sigma(\Theta)$. Indeed 
since we know that the inner function $\Theta$ is analytic on $\T\setminus\sigma(\Theta)$
we will thus get 
\[
\lim_{\ell\to +\infty}|\Theta(\lambda_{n_\ell})|=|\Theta(\zeta_0)|=1.
\]
In order to prove that $\zeta_0\in\T\setminus\sigma(\Theta)$, we argue, assuming to the contrary, that $\zeta_0\in\sigma(\Theta)$. Then for $\zeta\in\T\setminus\Sigma$ and for sufficiently large $\ell$, we have 
\begin{eqnarray*}
|1-\bar\lambda_{n_\ell}\zeta|
 &\geq& |\zeta_0-\zeta|-|\lambda_{n\ell}-\zeta_0|\geq d(\zeta_0,\T\setminus\Sigma)-|\lambda_{n\ell}-\zeta_0|\\
 &\geq& \frac{1}{2} d(\zeta_0,\T\setminus\Sigma).
\end{eqnarray*}
Since $\zeta_0\in\sigma(\Theta)\subset\Sigma$ and $\Sigma$ is open, we have $d(\zeta_0,\T\setminus\Sigma)>0$. 
Hence
\[
\int_{\T\setminus\Sigma}\frac{1-|\lambda_{n_\ell}|^2}{|1-\bar\lambda_n \zeta|^2}\,dm(\zeta)\leq \frac{4(1-|\lambda_{n_\ell}|^2)}{d^2(\zeta_0,\T\setminus\Sigma)}\to 0\qquad\hbox{as }\ell\to+\infty.
\]
On the other hand, by hypothesis, we have $\widehat{\chi_\Sigma}(\lambda_{n_\ell})\to 0$, $\ell\to+\infty$. Taking into account \eqref{eq:harmonic-prolongement}, we get
\begin{eqnarray*}
\lefteqn{\int_\T \frac{1-|\lambda_{n_\ell}|^2}{|1-\bar\lambda_{n_\ell} \zeta|^2}\,dm(\zeta)
 =}\\
  &&\int_\Sigma \frac{1-|\lambda_{n_\ell}|^2}{|1-\bar\lambda_{n_\ell} \zeta|^2}\,dm(\zeta)
 +\int_{\T\setminus\Sigma} 
 \frac{1-|\lambda_{n_\ell}|^2}{|1-\bar\lambda_{n_\ell} \zeta|^2}\,dm(\zeta)
 \to 0
\end{eqnarray*}
as $\ell\to+\infty$.
But this is a contradiction since the left hand side is always $1$.
\end{proof}

\begin{Corollary} \label{thm:arbitrarily small dom. set} Let $\Theta$ be an inner function such that $m(\sigma(\Theta))=0$. Then, for every $0<\varepsilon<1$, there is a dominating set $\Sigma$ for $(\Theta H^2)^\perp$ such that $m(\Sigma)<\varepsilon$.
\end{Corollary}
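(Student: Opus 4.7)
The plan is to deduce this corollary directly from Proposition~\ref{thm:neighborhood dom.} together with the outer regularity of Lebesgue measure on the unit circle.

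First, I would observe that $\sigma(\Theta)$ is a closed subset of $\T$ (by the definition involving $\varliminf$), and by hypothesis it has $m$-measure zero. Fix $0<\varepsilon<1$. By outer regularity of $m$ on $\T$, there exists an open set $\Sigma\subset\T$ with $\sigma(\Theta)\subset\Sigma$ and $m(\Sigma)<\varepsilon$. Since $\varepsilon<1$, in particular $m(\Sigma)<1$, so $\Sigma$ satisfies the size requirement in the definition of a dominating set.

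Then I would simply invoke Proposition~\ref{thm:neighborhood dom.}: any open subset of $\T$ containing $\sigma(\Theta)$ with measure strictly less than $1$ is a dominating set for $(\Theta H^2)^{\perp}$. Applied to the $\Sigma$ just constructed, this yields the desired conclusion.

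I do not expect any genuine obstacle; the work has been done in Proposition~\ref{thm:neighborhood dom.}, and all this corollary adds is the quantitative remark that when $m(\sigma(\Theta))=0$ the containing open set can be chosen of arbitrarily small measure. The only thing to keep in mind is that $\sigma(\Theta)$ is closed (so that outer regularity applies cleanly) and that the hypothesis $\varepsilon<1$ is what guarantees that the resulting $\Sigma$ is nontrivial, i.e.\ meets the definitional constraint $m(\Sigma)<1$.
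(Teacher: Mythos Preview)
Your proposal is correct and matches the paper's own proof essentially line for line: both use outer regularity of $m$ to produce an open $\Sigma\supset\sigma(\Theta)$ with $m(\Sigma)<\varepsilon<1$, then invoke Proposition~\ref{thm:neighborhood dom.} to conclude that $\Sigma$ is dominating.
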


\begin{proof}
By outer regularity of  Lebesgue measure, we know that for every 
$0<\varepsilon<1$, there exists an open subset $\Sigma$ of $\T$ such that $\sigma(\Theta)\subset\Sigma$ and $m(\Sigma)<\varepsilon$. Now Theorem~\ref{thm:neighborhood dom.} implies that such a $\Sigma$ is a dominating set for $(\Theta H^2)^\perp$.

\end{proof}

\begin{Remark}
Proposition~\ref{thm:neighborhood dom.} and Corollary~\ref{thm:arbitrarily small dom. set}  apply in particular when $\Theta\in (CLS)$, because in that situation we  know by a result of Aleksandrov~\cite{MR1039571} that $m(\sigma(\Theta))=0$.  
\end{Remark}

In the beginning of this section we have seen a simple argument that
a dominating set has to be close to every point of the complement
of $\ADC_{\Theta}$. Also recall that $\sigma(\Theta)$ contains this
complement. 
In order to show that a dominating set has to be close to
every point of $\sigma(\Theta)$ we use Volberg's theorem (note that
$\T\setminus\ADC_{\Theta}$ can be empty while $\sigma(\Theta)$ 
is never empty).

\begin{Proposition}
Let $\Theta$ be an inner function. 
If $\Sigma$ is a dominating set for $(\Theta H^2)^\perp$, then 
for every $\zeta\in\sigma(\Theta)$, we have
$d(\zeta,\Sigma)=0$.  
\end{Proposition}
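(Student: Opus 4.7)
The plan is to argue by contradiction using Volberg's theorem (Theorem~\ref{thm:Volberg}), which directly applies because the dominating condition is equivalent to saying that the bounded weight $w = \chi_\Sigma \in L^\infty(\T)$ yields a measure $d\mu = w\,dm$ making $\|\cdot\|_\mu$ and $\|\cdot\|_2$ equivalent on $(\Theta H^2)^\perp$. Therefore, by the equivalence (i)$\Leftrightarrow$(iii) of Volberg's theorem,
\[
\inf_{\lambda\in\D}\bigl(\widehat{\chi_\Sigma}(\lambda)+|\Theta(\lambda)|\bigr)>0.
\]

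Suppose now, towards a contradiction, that there exists $\zeta\in\sigma(\Theta)$ with $d(\zeta,\Sigma)>0$. By the very definition of $\sigma(\Theta)$ in \eqref{liminf-zero-set}, we can pick a sequence $(\lambda_n)_{n\geq 1}\subset\D$ with $\lambda_n\to\zeta$ and $|\Theta(\lambda_n)|\to 0$. The idea is then to show that $\widehat{\chi_\Sigma}(\lambda_n)\to 0$ as well, which would contradict the Volberg lower bound above.

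For this, write $\delta := d(\zeta,\Sigma)>0$. For every $\xi\in\Sigma$ and $n$ large enough,
\[
|\lambda_n-\xi|\;\geq\;|\zeta-\xi|-|\zeta-\lambda_n|\;\geq\;\delta-|\zeta-\lambda_n|\;\geq\;\delta/2,
\]
so the Poisson kernel $P_{\lambda_n}(\xi)=(1-|\lambda_n|^2)/|\lambda_n-\xi|^2$ is uniformly controlled on $\Sigma$. Hence
\[
\widehat{\chi_\Sigma}(\lambda_n)=\int_\Sigma \frac{1-|\lambda_n|^2}{|\lambda_n-\xi|^2}\,dm(\xi)\;\leq\;\frac{4(1-|\lambda_n|^2)}{\delta^2}\longrightarrow 0,
\]
since $|\lambda_n|\to 1$. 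Combined with $|\Theta(\lambda_n)|\to 0$, this gives $\widehat{\chi_\Sigma}(\lambda_n)+|\Theta(\lambda_n)|\to 0$, contradicting Volberg's condition.

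The main (and only) real observation is that in the dominating-set setting the weight $\chi_\Sigma$ is in $L^\infty$, which is exactly what is needed to invoke Volberg's theorem; everything else is an elementary Poisson-kernel estimate. I do not anticipate any serious obstacle, but it is worth noting that this argument succeeds even when $\T\setminus\ADC_\Theta=\varnothing$ (where the reproducing-kernel argument of Proposition~\ref{prop dist dom. to ADC} breaks down), which is precisely why Volberg's theorem, rather than a direct test on kernels $k_{\lambda_n}^\Theta$, is the right tool here.
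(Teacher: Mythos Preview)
Your proof is correct and follows essentially the same approach as the paper's: invoke Volberg's condition $\inf_{\lambda\in\D}(\widehat{\chi_\Sigma}(\lambda)+|\Theta(\lambda)|)>0$, pick a sequence $\lambda_n\to\zeta\in\sigma(\Theta)$ with $\Theta(\lambda_n)\to 0$, and use the triangle inequality to show $\widehat{\chi_\Sigma}(\lambda_n)\to 0$ when $d(\zeta,\Sigma)>0$, yielding the contradiction. The argument and the estimates are virtually identical to those in the paper.
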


\begin{proof}
By Volberg's theorem, we know that there exists a $\delta>0$ such that 
\begin{equation}\label{eq:condition-Volberg}
\widehat\chi_\Sigma(z)+|\Theta(z)|\geq \delta,\qquad z\in\D.
\end{equation}
Assume there is $\zeta\in\sigma(\Theta)$
with $d(\zeta,\Sigma)>0$. 
By  \eqref{liminf-zero-set}  there exists a sequence $(z_n)_{n \geq 1}\subset\D$ such that $z_n\to\zeta$ and $\Theta(z_n)\to 0$, as $n\to +\infty$. Now
\[
\widehat\chi_\Sigma(z_n)=\int_\Sigma \frac{1-|z_n|^2}{|z_n-\xi|^2}\,dm(\xi),
\]
and for $\xi\in\Sigma$
\[
|\xi-z_n|\geq |\xi-\zeta|-|\zeta-z_n|\geq d(\zeta,\Sigma)-|\zeta-z_n|.
\]
Since $|\zeta-z_n|\to 0$, as $n\to +\infty$, we get, for $n$ sufficiently large, $|\xi-z_n|\geq d(\zeta,\Sigma^-)/2$, which yields
\[
\widehat \chi_\Sigma(z_n)\leq \frac{4}{d(\zeta,\Sigma^-)^2}(1-|z_n|^2)\to 0,\quad \mbox{as }n\to +\infty,
\]
contradicting  \eqref{eq:condition-Volberg}.   
\end{proof}

We have seen that if the spectrum of $\Theta$ is not the whole circle, then there are many dominating 
sets. Here is a first
result on dominating sets when the spectrum of $\Theta$ is the whole circle.
Then 
the preceding proposition allows to deduce a topological condition on the size of the dominating set. 

\begin{Corollary}\label{thm:dense-dominating-set}
Let $\Theta$ be an inner function such that $\sigma(\Theta)=\T$. If $\Sigma$ is a dominating set for $(\Theta H^2)^\perp$, then $\Sigma$ is dense in $\T$.  
\end{Corollary}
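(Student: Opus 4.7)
The plan is to observe that this corollary is essentially immediate from the preceding proposition, which asserts that whenever $\Sigma$ is a dominating set for $(\Theta H^2)^\perp$, one has $d(\zeta,\Sigma)=0$ for every $\zeta\in\sigma(\Theta)$. So the task reduces to translating this distance condition into topological density under the extra hypothesis $\sigma(\Theta)=\T$.

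Concretely, first I would fix an arbitrary point $\zeta\in\T$. Since by hypothesis $\sigma(\Theta)=\T$, we have $\zeta\in\sigma(\Theta)$, and the preceding proposition applies directly to give $d(\zeta,\Sigma)=0$. By the definition of the distance $d(\zeta,\Sigma)=\inf\{|\zeta-\xi|:\xi\in\Sigma\}$, this means there exists a sequence $(\xi_n)_{n\ge 1}\subset\Sigma$ with $\xi_n\to \zeta$, so $\zeta\in\overline{\Sigma}$. Since $\zeta$ was an arbitrary point of $\T$, this gives $\T\subset\overline{\Sigma}$, and as $\Sigma\subset\T$ we conclude $\overline{\Sigma}=\T$, i.e., $\Sigma$ is dense in $\T$.

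There is really no obstacle here: the entire content is packaged in the preceding proposition (whose proof uses Volberg's theorem in a nontrivial way via the approach $z_n\to\zeta$ with $\Theta(z_n)\to 0$ guaranteed by $\zeta\in\sigma(\Theta)$). The corollary is simply the observation that $d(\zeta,\Sigma)=0$ for all $\zeta$ in a set $E$ is the same as $E\subset\overline{\Sigma}$, specialised to $E=\T$. The proof will therefore be only a couple of lines, with no additional machinery required beyond invoking the preceding result.
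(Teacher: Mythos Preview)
Your proposal is correct and matches the paper's approach exactly: the paper states this as an immediate corollary of the preceding proposition (that $d(\zeta,\Sigma)=0$ for every $\zeta\in\sigma(\Theta)$), without even writing out the density argument you spell out.
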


\begin{Remark}
According to Proposition~\ref{thm:neighborhood dom.}, we see that if $\sigma(\Theta)\neq \T$, then one can construct closed dominating sets for $(\Theta H^2)^\perp$. Indeed it is
sufficient to choose an open set $\Sigma$ such that $\sigma(\Theta)\subset\Sigma\subset\Sigma^-$ and $m(\Sigma^-)<1$. Corollary~\ref{thm:dense-dominating-set} shows that  the converse is true. In other words, the space $(\Theta H^2)^\perp$ has {\it closed} dominating sets if and only if $\sigma(\Theta)\neq \T$. In Proposition \ref{DomSetDenseSpectrum}
we will construct a Blaschke product $B$ with $\sigma(B)=\T$ and which admits
an open dominating set.
\end{Remark}

We now discuss, with the help of examples, the conditions
of our previous results and the ``size'' of dominating sets.

\begin{Example} \label{ExamSingInn}
Let $\Theta(z)=\exp(-(1+z)/(1-z)).$ In this case,  $\sigma(\Theta)=\{1\} $
and $\Theta \in (CLS)$. According to Proposition \ref{thm:neighborhood dom.},
every open arc containing $1$ is a dominating set for $\Theta$.
In this example, the {}``open'' condition seems to be necessary.
Indeed, we have the following result.
\end{Example}

\begin{Proposition}
Let $\Theta(z)=\exp(-(1+z)/(1-z)).$ Suppose that $\Sigma\subset \T$ is a closed arc
with endpoints 1 and $e^{i\gamma_0}$, $\gamma_0\in (0,\pi)$,
then $\Sigma$ is not dominating for $(\Theta H^2)^{\perp}$.
\end{Proposition}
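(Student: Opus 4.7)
The plan is to exhibit a sequence in $(\Theta H^2)^\perp$ that has unit $L^2(\mathbb{T})$-norm but vanishing $L^2(\Sigma)$-norm, contradicting the dominating property. The natural candidates are the normalized reproducing kernels $\tilde k_{\lambda_n}^\Theta := k_{\lambda_n}^\Theta/\|k_{\lambda_n}^\Theta\|_2$, where the $\lambda_n \in \mathbb{D}$ approach the endpoint $1$ of $\Sigma$ \emph{tangentially from the side of $1$ lying in $\mathbb{T}\setminus\Sigma$}. Since $\gamma_0\in(0,\pi)$, exactly one of the two halves of a small neighborhood of $1$ on $\mathbb{T}$ lies outside $\Sigma$; without loss of generality suppose this is the clockwise side, and set $\lambda_n = r_n e^{-i\theta_n}$ with $r_n\uparrow 1$ and $\theta_n = (1-r_n)^{2/3}$.

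The first step is to verify that $|\Theta(\lambda_n)|\to 0$, using the explicit identity
\[
|\Theta(z)|^2 = \exp\!\left(-\frac{2(1-|z|^2)}{|1-z|^2}\right).
\]
Since $1-|\lambda_n|\asymp 1-r_n$ and $|1-\lambda_n|^2\asymp(1-r_n)^2+\theta_n^2\asymp (1-r_n)^{4/3}$, the exponent equals $-2(1-r_n)^{-1/3}+o(1)\to-\infty$, so $\Theta(\lambda_n)\to 0$ and consequently, by \eqref{k-norm}, $\|k_{\lambda_n}^\Theta\|_2^2 \asymp (1-r_n)^{-1}$.

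The second step is to bound $\|k_{\lambda_n}^\Theta\|_{L^2(\Sigma)}$. On $\mathbb{T}$ one has $|k_{\lambda_n}^\Theta(z)|^2\le 4/|1-\overline{\lambda_n}z|^2$; writing $z=e^{i\gamma}$ with $\gamma\in[0,\gamma_0]$ and using $|1-\overline{\lambda_n}z|^2\asymp (1-r_n)^2+(\gamma+\theta_n)^2$, the change of variable $u=(\gamma+\theta_n)/(1-r_n)$ gives
\[
\int_{\Sigma}\frac{dm(z)}{|1-\overline{\lambda_n}z|^2} \;\lesssim\; \frac{1}{1-r_n}\int_{\theta_n/(1-r_n)}^{\infty}\frac{du}{1+u^2} \;\lesssim\; \frac{1}{\theta_n} \;\asymp\; (1-r_n)^{-2/3},
\]
where the key fact is that $\theta_n/(1-r_n)=(1-r_n)^{-1/3}\to\infty$, so the tail of the Cauchy integral is small. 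Dividing by $\|k_{\lambda_n}^\Theta\|_2^2\asymp(1-r_n)^{-1}$ yields $\|\tilde k_{\lambda_n}^\Theta\|_{L^2(\Sigma)}^2\lesssim (1-r_n)^{1/3}\to 0$, while $\|\tilde k_{\lambda_n}^\Theta\|_2=1$; this rules out the dominating inequality.

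The crux of the argument is the calibration of the exponent $\alpha=2/3$: we need $\alpha>1/2$ so that $\theta_n^2\ll 1-r_n$, ensuring $|\Theta(\lambda_n)|\to 0$, and simultaneously $\alpha<1$ so that $\theta_n\gg 1-r_n$, making the tangential approach strong enough that the Poisson mass on $\Sigma$ near the endpoint $1$ decays. Any $\alpha\in(1/2,1)$ works, and the intuition is just the standard fact that tangential boundary approach kills Poisson-type integrals over arcs reached non-tangentially. Alternatively, one could package the same two estimates as $|\Theta(\lambda_n)|+\widehat{\chi_\Sigma}(\lambda_n)\to 0$ and invoke Volberg's theorem (Theorem~\ref{thm:Volberg}) to conclude. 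The only genuine obstacle is bookkeeping the constants in the two-sided estimate of $|1-\overline{\lambda_n}z|^2$ near the endpoint $1$, but this is routine.
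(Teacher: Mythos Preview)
Your proof is correct and is essentially the same as the paper's: you use the identical tangential sequence (the paper writes it as $1-r_n=\theta_n^{3/2}$, which is just your $\theta_n=(1-r_n)^{2/3}$) and carry out the same two estimates. The only cosmetic difference is that the paper packages the conclusion via Volberg's theorem (showing $|\Theta(\lambda_n)|+\widehat{\chi}_\Sigma(\lambda_n)\to 0$), whereas you test the normalized kernels directly---but since $\int_\Sigma|k_{\lambda_n}^\Theta|^2\,dm\big/\|k_{\lambda_n}^\Theta\|_2^2\asymp\widehat{\chi}_\Sigma(\lambda_n)$ once $\Theta(\lambda_n)\to 0$, this is literally the same computation, and you yourself note the Volberg alternative at the end.
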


\begin{proof}
Let 
\[
 z_n =r_ne^{-i\theta_n},\quad 1-r_n=\theta_n^{3/2},
\]
where $\theta_n>0$ and $\theta_n\longrightarrow 0$. The sequence $z_n$ goes
tangentially ``from below'' to 1, i.e. $\arg(z_n)\in (-\frac{\pi }{2},0)$. In particular we have
\[
 |1-z_n|^2\asymp (1-r_n)^2+\theta_n^2=
  \theta_n^3+\theta_n^2\asymp\theta_n^2,\quad
 n\to\infty.
\]
Hence
\[
 \frac{1-|z_n|^2}{|1-z_n|^2}\asymp
 \frac{1-r_n}{\theta_n^2}=\frac{\theta_n^{3/2}}{\theta_n^2}
 =\frac{1}{\theta_n^{1/2}}\longrightarrow \infty,
\] 
and thus
\[
 |\Theta(z_n)|=\exp\left(-\frac{1-|\lambda_n|^2}{|1-\lambda_n|^2}\right)
 \longrightarrow 0.
\]

Observe that
\begin{eqnarray*}
 \widehat{\chi}_{\Sigma}(z_n)
 &=&\int_{\Sigma}\frac{1-r_n^2}{|e^{it}-z_n|^2}\frac{dt}{2\pi}
 \lesssim\int_{0}^{\gamma_0}\frac{\theta_n^{3/2}}{|e^{it}-e^{-i\theta_n}|^2}dt\\
 &=&\int_{0}^{\gamma_0}\frac{\theta_n^{3/2}}{2(1-\cos(t+\theta_n))}dt
 \asymp\int_{0}^{\gamma_0}\frac{\theta_n^{3/2}}{(t+\theta_n)^2}dt\\
 &=&\theta_n^{3/2}\left[\frac{-1}{t+\theta_n}\right]_0^{\gamma_0}\\
 &\asymp&\theta_n^{1/2}\longrightarrow 0.
\end{eqnarray*}
So $\inf_{\lambda\in\D}(\widehat{\chi}_{\Sigma}(\lambda)+|\Theta(\lambda)|)=0$,
and Volberg's theorem allows us to conclude that $\Sigma$ is not dominating.
\end{proof}

The situation is, of course, the same if we replace $\gamma_0\in (0,\pi)$ by
$\gamma_0\in (-\pi,0)$.

Compare this with the next situation.

\begin{Example}\label{ExamBlProd1}
Let $\Theta=B$ be a Blaschke product with simple zeros $\lambda_n=r_{n}e^{i\theta_{n}}$,
$\theta_{n}=2^{-n}$ and $1-r_{n}=16^{-n}$.
This is an 
inner function such that $\sigma(B)=\{1\}$ and $\ADC_{B}=\mathbb{T}$
(Ahern-Clark). 
We have the following result.
\end{Example}

\begin{Proposition}
Let $B$ be as above.
Suppose that $\Sigma\subset \T$ is a closed arc
with endpoints 1 and $e^{i\gamma_0}$, $\gamma_0\in (0,\pi)$,
then $\Sigma$ is dominating for $(BH^2)^{\perp}$.
\end{Proposition}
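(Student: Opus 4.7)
The plan is to apply Volberg's theorem (Theorem~\ref{thm:Volberg}) to the bounded nonnegative weight $w = \chi_\Sigma$: showing that $\Sigma$ is dominating for $(BH^2)^\perp$ reduces to checking
\[
 \inf_{\lambda \in \D}\bigl(\widehat{\chi_\Sigma}(\lambda) + |B(\lambda)|\bigr) > 0.
\]
I argue by contradiction. Let $(z_n) \subset \D$ satisfy $\widehat{\chi_\Sigma}(z_n) \to 0$ and $|B(z_n)| \to 0$, and extract a subsequence with $z_n \to z_0 \in \D^-$. If $z_0 \in \D$ then $\widehat{\chi_\Sigma}(z_0) > 0$ by positivity of the Poisson integral, contradicting $\widehat{\chi_\Sigma}(z_n) \to 0$. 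If $z_0 \in \T \setminus \{1\}$ then, since $\sigma(B) = \{1\}$, $B$ continues analytically across $z_0$ with $|B(z_0)| = 1$, contradicting $|B(z_n)| \to 0$. Hence $z_0 = 1$; write $z_n = \rho_n e^{i\varphi_n}$ with $\rho_n \to 1^-$ and $\varphi_n \to 0$, and split into two subcases by the sign of $\varphi_n$.

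\emph{Case (a): $\varphi_n \geq 0$ along a subsequence.} Eventually $\varphi_n \in [0, \gamma_0/2]$. The change of variable $s = t - \varphi_n$ in the Poisson integral gives
\[
 \widehat{\chi_\Sigma}(z_n) = \int_{-\varphi_n}^{\gamma_0 - \varphi_n} P_{\rho_n}(s)\,\frac{ds}{2\pi} \geq \int_0^{\gamma_0/2} P_{\rho_n}(s)\,\frac{ds}{2\pi},
\]
where $P_{\rho_n}(s) = (1-\rho_n^2)/|e^{is} - \rho_n|^2$. Since the Poisson kernel concentrates at $s = 0$, the right-hand side tends to $1/2$, so $\widehat{\chi_\Sigma}(z_n) \geq 1/3$ eventually, a contradiction.

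\emph{Case (b): $\varphi_n < 0$ along a subsequence.} Here $z_n$ approaches $1$ from the lower half of the disk while all zeros $\lambda_k = r_k e^{i\theta_k}$, $\theta_k = 2^{-k}$, of $B$ lie in the upper half. Writing $b_k(z) := (\lambda_k - z)/(1 - \bar\lambda_k z)$ and using the elementary inequality $1 - \prod_k x_k \leq \sum_k (1 - x_k)$ for $x_k \in [0,1]$, it suffices to show $\sum_k (1 - |b_k(z_n)|^2) \to 0$ to conclude $|B(z_n)| \to 1$. The key geometric point is that $\varphi_n < 0 < \theta_k$ forces $|\varphi_n - \theta_k| \geq \theta_k$, so
\[
 |1 - \bar\lambda_k z_n|^2 \gtrsim (1 - r_k)^2 + (1 - \rho_n)^2 + \theta_k^2 \gtrsim \delta_n^2 + 4^{-k}, \qquad \delta_n := 1 - \rho_n.
\]
With $1 - r_k^2 \asymp 16^{-k}$ and $1 - \rho_n^2 \asymp \delta_n$, this yields
\[
 1 - |b_k(z_n)|^2 = \frac{(1-r_k^2)(1-\rho_n^2)}{|1 - \bar\lambda_k z_n|^2} \lesssim \frac{16^{-k}\,\delta_n}{\delta_n^2 + 4^{-k}}.
\]
Splitting the sum over $k$ at the threshold $k_0$ defined by $4^{-k_0} \asymp \delta_n$ produces two geometric series, each of total $\lesssim \delta_n$; hence $\sum_k(1 - |b_k(z_n)|^2) \lesssim \delta_n \to 0$ and $|B(z_n)| \to 1$, contradicting $|B(z_n)| \to 0$.

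The principal obstacle is Case (b): the precise relation $1 - r_k = 16^{-k} = \theta_k^4$ between the moduli and arguments of the zeros is exactly what makes the Blaschke sum converge to $0$ when $z$ approaches $1$ from the side opposite the zeros, and the splitting threshold must be chosen carefully so that both pieces of $\sum_k(1 - |b_k|^2)$ are $O(\delta_n)$. An approach from the same side (e.g.\ $z_n = \lambda_n$) does not give $|B(z_n)| \to 1$, so the sign asymmetry between $\varphi_n$ and $\theta_k$ is essential.
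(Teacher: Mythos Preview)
Your proof is correct and follows the paper's strategy exactly: apply Volberg's theorem, reduce to sequences $z_n\to 1$, and split according to whether $\arg z_n\geq 0$ (where the Poisson integral of $\chi_\Sigma$ stays bounded below) or $\arg z_n<0$ (where $|B(z_n)|\to 1$).

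The only technical difference is in Case~(b). The paper dispatches it more quickly: for $z$ in the closed lower half--disk one has $|\varphi-\theta_k|\geq\theta_k$, hence $|1-\bar\lambda_k z|^2\gtrsim\theta_k^2=4^{-k}$, and therefore
\[
 |B'(z)|\leq\sum_{k\geq 1}\frac{1-|\lambda_k|^2}{|1-\bar\lambda_k z|^2}
 \lesssim\sum_{k\geq 1}\frac{16^{-k}}{4^{-k}}=\sum_{k\geq 1}4^{-k}<\infty,
\]
so $B$ extends continuously to the closed lower half--disk and $|B(z_n)|\to |B(1)|=1$ immediately. Your dyadic splitting of $\sum_k(1-|b_k(z_n)|^2)$ at the threshold $4^{-k_0}\asymp\delta_n$ reaches the same conclusion but with more work; the extra precision (the $O(\delta_n)$ rate) is not needed here since any $o(1)$ bound suffices. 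Either way the argument is sound.
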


\begin{proof}
Again we use Volberg's theorem. Let $(z_n)_{n\geq 1}\subset\D$ be such that $\widehat\chi_{\Sigma}(z_n)\to 0$. We have to show that $|B(z_n)|\to 1$. Pick a convergent subsequence $z_{n_k}\to \zeta\in\T$. Since $\sigma(B)=\{1\}$, the only critical situation is when $\zeta=1$. First we argue that $z_{n_k}\to 1$ "from below". Indeed, assume on the contrary that there is a subsequence, also denoted by $z_{n_k}$, such that $\arg(z_{n_k})\in (0,\pi/2)$. For a point $\lambda\in\D$, recall that $I_\lambda$ denotes the Privalov shadow associated to $\lambda$, that is the arc in $\T$ centered at $\lambda/|\lambda|$ with length $(1-|\lambda|)$ -- see Figure \ref{fig:privalov}. It is easy to see that for $\zeta\in I_\lambda$, we have 
\[
\frac{1-|\lambda|^2}{|\zeta-\lambda|^2}\asymp \frac{1}{1-|\lambda|^2}.
\]
Now, since $z_{n_k}\to 1$, with $\arg(z_{n_k})\in (0,\pi/2)$, there exists an integer $N$ such that for any $k\geq N$, 
we have $I_{z_{n_k}}\cap\T_+\subset\Sigma$ and $m(I_{z_{n_k}}\cap\T_+)\geq (1-|z_{n_k}|)/2$, where $\T_+=\{z\in\T:\arg(z)\in (0,\pi)\}$. Thus 
\begin{eqnarray*}
\widehat\chi_{\Sigma}(z_{n_k})&=&\int_{\Sigma}\frac{1-|z_{n_k}|^2}{|\zeta-z_{n_k}|^2}\,dm(\zeta)\geq \int_{I_{z_{n_k}}\cap\T_+}\frac{1-|z_{n_k}|^2}{|\zeta-z_{n_k}|^2}\,dm(\zeta) \\
&\gtrsim& \frac{1}{1-|z_{n_k}|^2}\frac{1-|z_{n_k}|}{2}\gtrsim 1, 
\end{eqnarray*}
which contradicts the fact that $\widehat\chi_{\Sigma}(z_n)\to 0$. Hence we can assume that $z_{n_k}\to 1$ with $\arg(z_{n_k})\in (-\pi/2,0)$. Taking the logarithmic derivative and using \eqref{eq:existence derivative}, it is easy to see that 
\[
|B'(z)|\leq \sum_{n \geq 1}\frac{1-|\lambda_n|^2}{|1-\bar\lambda_n z|^2},\qquad z\in \D^-,
\]  
and standard estimates show that if $z$ belongs to the closed lower half-disc $\Upsilon:= \lbrace z:\; \vert z\vert \leq 1, \text{Im}(z)\leq 0 \rbrace$, then 
\[
|B'(z)| \lesssim \sum_{n \geq 1}\frac{1}{4^n}<+\infty,
\]
which means that $B'$ is uniformly
bounded on  $\Upsilon$.  Hence $B$ is continuous on $\Upsilon$. In particular, we get that  $|B(z_{n_k})|\to 1$. We conclude from Volberg's theorem that $\Sigma$ is dominating for $(BH^2)^{\perp}$.
\end{proof}

%

We finish this section with an example of a Blaschke product
whose boundary spectrum is the whole circle and which admits a dominating
set $\Sigma$. 

\begin{Proposition}\label{DomSetDenseSpectrum}
There exists a Blaschke product $B$ with $\sigma(B)=\T$
and an open subset
$\Sigma\subsetneq\mathbb{T}$ 
dominating 
for $\left(BH^{2}\right)^{\perp}$.
\end{Proposition}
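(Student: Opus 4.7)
The plan is to realise $\Sigma$ as the complement of a fat, nowhere-dense Cantor-type subset of $\T$, and to design $B$ so that its zeros accumulate at every point of $\T$ (forcing $\sigma(B)=\T$) yet each zero is buried deeply in a Carleson box over a complementary arc of $K=\T\setminus\Sigma$. Volberg's theorem (Theorem~\ref{thm:Volberg}) will then convert this geometric setup into the required dominating property.

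Concretely, I would start by choosing a Smith--Volterra--Cantor set $K\subset\T$ with $0<m(K)<1$, and setting $\Sigma:=\T\setminus K$, which is an open dense proper subset of $\T$ with $m(\Sigma)<1$. Writing $\Sigma=\bigsqcup_n J_n$ as a disjoint countable union of open arcs (whose endpoints all lie in $K$), I distribute the zeros of $B$ dyadically inside the Carleson boxes $S(J_n)$: for each $n$ and each generation $j\ge 1$, partition $J_n$ into $2^j$ equal sub-arcs of length $|J_n|/2^j$, and set
\[
\lambda_{n,j,i} \;:=\; \bigl(1-2^{-3j}|J_n|\bigr)\,\xi_{n,j,i},\qquad 1\leq i\leq 2^j,
\]
where $\xi_{n,j,i}$ is the midpoint of the $i$-th sub-arc. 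The Privalov shadow $I_{\lambda_{n,j,i}}$ then has length $|J_n|/2^{3j}$ and is centred at distance at least $|J_n|/2^{j+1}$ from $\partial J_n$, so it is contained in $J_n\subset\Sigma$ with a generous margin; in particular $\widehat{\chi_\Sigma}(\lambda_{n,j,i})\geq c_0 > 0$ uniformly in $(n,j,i)$. The Blaschke sum telescopes to
\[
\sum_{n,j,i}(1-|\lambda_{n,j,i}|) \;=\; \sum_n |J_n|\sum_{j\ge 1}2^{-2j}\;\leq\; \tfrac13\,m(\Sigma)<\infty,
\]
so $B$ converges. For any $\zeta\in\overline{J_n}$, choosing at each generation $j$ the midpoint $\xi_{n,j,i(j)}$ nearest $\zeta$ gives $\lambda_{n,j,i(j)}\to\zeta$; since $\bigcup_n J_n$ is dense in $\T$, the zero set accumulates at every boundary point, and $\sigma(B)=\T$.

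The heart of the argument is verifying the equivalent condition~(iii) of Volberg's theorem, namely $\inf_{\lambda\in\D}\bigl(\widehat{\chi_\Sigma}(\lambda)+|B(\lambda)|\bigr)>0$. The exponent $2^{-3j}$ is calibrated so that the horizontal spacing $|J_n|/2^j$ between neighbouring zeros at generation $j$ dominates their vertical scale $|J_n|/2^{3j}$ by a factor $2^{2j}$; a direct pseudo-hyperbolic computation then shows $(\lambda_{n,j,i})$ is uniformly separated, and its associated atomic measure is readily checked to be Carleson. Hence the sequence is interpolating by Carleson's $H^\infty$ interpolation theorem, and a standard consequence produces constants $c,\delta>0$ such that $|B(\lambda)|\geq c$ whenever $\lambda$ lies outside every pseudo-hyperbolic disc $D_\delta(\lambda_{n,j,i})$. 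If, on the other hand, $\lambda\in D_\delta(\lambda_{n,j,i})$ for some $(n,j,i)$, then $I_\lambda$ is comparable to $I_{\lambda_{n,j,i}}$ and, thanks to the built-in margin, still lies in $J_n$, so $\widehat{\chi_\Sigma}(\lambda)\gtrsim 1$. Either way the infimum is positive, and Volberg's theorem yields that $\Sigma$ dominates $(BH^2)^\perp$.

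The main obstacle is reconciling three competing requirements: the zeros must accumulate at the endpoints of every $J_n$ (which lie in $K$, where $\widehat{\chi_\Sigma}$ vanishes), forcing a non-tangential approach; each zero must sit deep enough inside $\Sigma$ that $\widehat{\chi_\Sigma}$ stays bounded below on a whole pseudo-hyperbolic neighbourhood of it, not only at the zero itself; and the entire family must be interpolating so that $|B|$ is uniformly bounded away from zero off the zero set. The specific choice $2^{-3j}$ is tuned precisely to make the three demands compatible, and the genuine subtlety is the Harnack-type estimate needed to convert the margin at the centre of a pseudo-hyperbolic disc into a uniform lower bound for $\widehat{\chi_\Sigma}$ throughout that disc.
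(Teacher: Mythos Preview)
Your proposal is correct and follows essentially the same route as the paper: take $\Sigma$ to be the complement of a Smith--Volterra--Cantor set, seed each Carleson box $S(J_n)$ with a dyadic array of zeros, verify the Blaschke condition and $\sigma(B)=\T$, show the zero sequence is interpolating (separated plus Carleson), and then invoke Volberg's criterion by splitting $\D$ into the region where $|B|\gtrsim 1$ and the pseudohyperbolic neighborhoods of the zeros where $\widehat{\chi_\Sigma}\gtrsim 1$. The only differences are cosmetic choices of parameters---the paper places zeros at depth $2^{-2l}$ with roughly $2^{l-2N_n}$ points per level, whereas you use depth $2^{-3j}|J_n|$ with $2^j$ midpoints per level---and your ``Harnack-type estimate'' is exactly the standard comparability of positive harmonic functions on pseudohyperbolic discs that the paper uses implicitly.
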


\begin{proof}

Let $C$ be a Smith-Volterra-Cantor set of $\mathbb{T}$ (i.e., a closed
subset of $\mathbb{T}$, nowhere dense and with positive measure, constructed
in a similar way as the usual Cantor set by removing the middle \emph{fourth} instead of the middle third).
We define $\Sigma:=\mathbb{T}\setminus C$ which is clearly a
dense open subset of $\mathbb{T}$ with $0<m\left(\Sigma\right)<1$.
Since $\Sigma$ is open, there is a sequence of open arcs $\left(I_{n}\right)_{n\geq 1}$
such that $\Sigma=\bigcup_{n \geq 1}I_{n}$. We denote by $\xi_{n}$ the
first endpoint (moving counterclockwise) of $I_{n}$ and $N_{n}$ the integer such that $2^{-\left(2N_{n}+2\right)}\leq \theta_n:=m(I_{n}) <2^{-2N_{n}}$
and $\alpha_n:=\theta_n2^{2N_n}\in [1/4,1)$. Observe that the second 
endpoint of $I_n$ then corresponds to $\xi_ne^{i2\pi\theta_n}$.
We now fix $n$. For each $l> 2N_{n}$ and each $k=1,...,2^{l-2N_{n}}-1$,
we set (see~Figure~\ref{fig:Construction-de})
\[
\lambda_{l,k}^{n}:=\left(1-\frac{1}{2^{2l}}\right)\xi_{n}e^{i\frac{k}{2^{l}}
 2\pi\alpha_n},\;\Lambda_{n}:=\left\{ \lambda_{l,k}^{n}:\; l,k\right\}, \text{ and }\Lambda:=\bigcup_{n \geq 1}\Lambda_{n}.
\]
Observe that every $\lambda_{\ell,k}^n\in\Lambda_{n}$
lies in the Carleson window $S\left(I_{n}\right)$. For each $l\geq N_{n}$,
every arc $\left\{ \left|z\right|=1-2^{-2l}\right\} \cap S\left(I_{n}\right)$
contains $2^{l-2N_{n}}-1$ points of $\Lambda$.

\begin{figure}
\includegraphics[scale=0.8,bb=75bp 400bp 575bp 800bp]{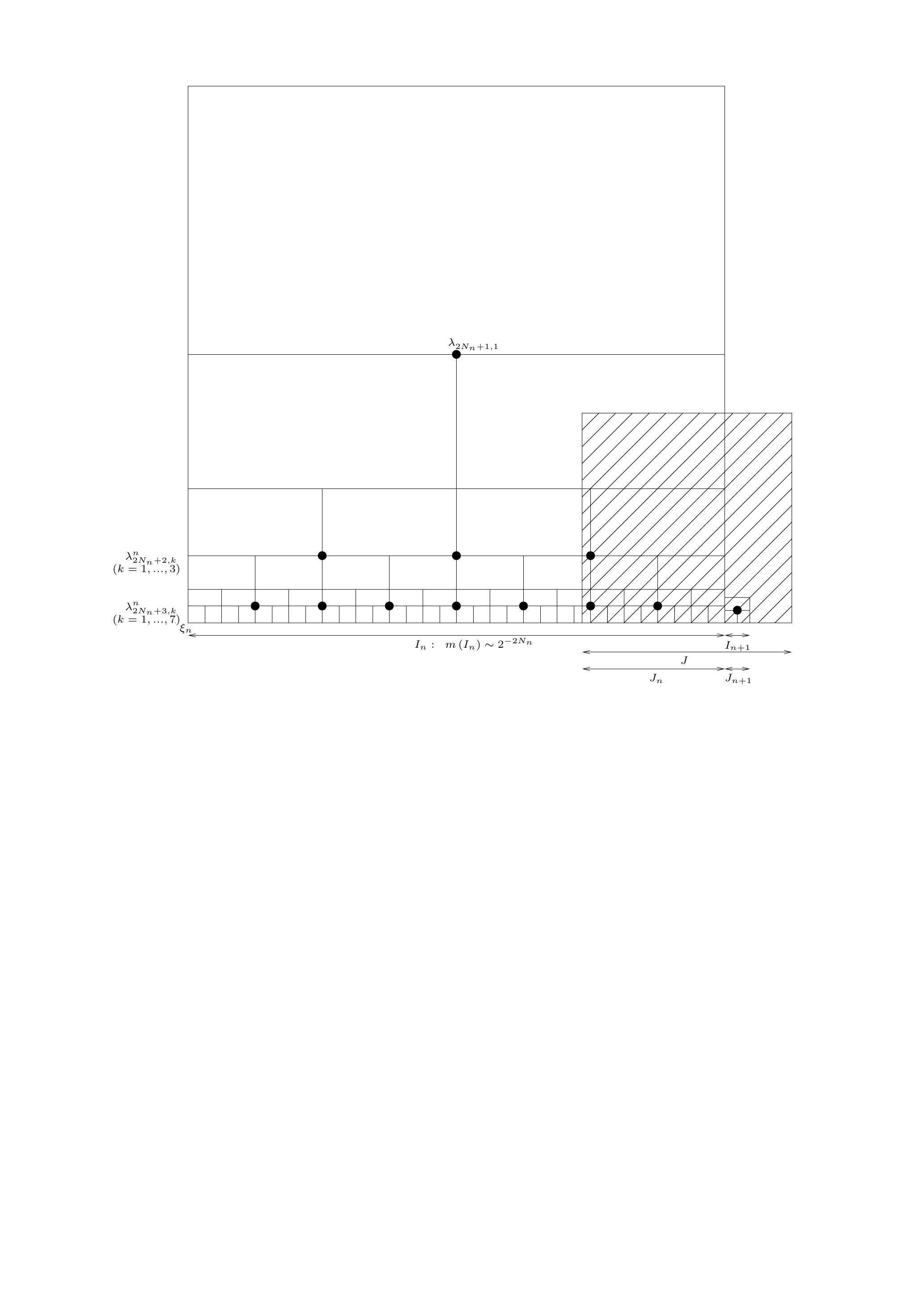}
\caption{\label{fig:Construction-de}$\Lambda_{n}$, $J_{n}$ and $J_{n+1}$}
\end{figure}

Thus, the sequence $\Lambda$ satisfies the Blaschke condition:
\begin{eqnarray*}
\sum_{\lambda\in\Lambda}\left(1-\left|\lambda\right|\right)&=&\sum_{n \geq 1}\sum_{l>2N_{n}}\frac{1}{2^{2l}}\left(2^{l-2N_{n}}-1\right) \\
&\leq & \sum_{n \geq 1}\frac{1}{2^{4N_{n}}}
\\
&\asymp & \sum_{n \geq 1} m(I_n)^2 <\infty. 
\end{eqnarray*}

Let $B$ be the corresponding Blaschke product. It is clear that every
point of $I_{n}$ is a cluster point of $\Lambda$. Since $\Sigma=\bigcup_{n}I_{n}$
is dense in $\mathbb{T}$, it easily follows that $\sigma\left(B\right)=\mathbb{T}$.
It remains to show that $\Sigma$ is a dominating set for $\left(BH^{2}\right)^{\perp}$.
We will use Volberg's theorem and show that 
\[
\inf_{z\in\mathbb{D}}\left(\left|B\left(z\right)\right|+\widehat{\chi_\Sigma}\left(z\right)\right)>0.
\]
The idea is the following. We will show that $\Lambda$ is an interpolating sequence, which implies that
$\left|B\right|$ is big outside a pseudohyperbolic neighborhood  of $\Lambda$  (see
below for definition). Inside a pseudohyperbolic
neighborhood of a point $\lambda\in\Lambda$, an easy computation
will show that $\widehat{\chi_\Sigma}$ is big.

In order to prove that $\Lambda$ is an interpolating sequence, it
suffices to prove that $\nu:=\sum_{\lambda\in\Lambda}\left(1-\left|\lambda\right|^2\right)\delta_{\lambda}$
is a Carleson measure \cite{Garnett} since the sequence $\Lambda$ is separated by construction.
Let $J$ be an arc of $\mathbb{T}$. It is possible to write $J=\bigcup_{n \geq 1}J_{n}$,
where $J_{n}:=J\cap I_{n}$ are disjoint arcs (observe that we have
three configurations: either $J_{n}=\emptyset$ or $I_{n}\subset J$
or $I_{n}$ meets $J$ without being contained in $J$, this latter
situation occurs at most two times at the endpoints of $J$ - see
Figure \ref{fig:Construction-de}).

Let us introduce the sector:
\[
\alpha\left(I\right):=\left\{ z\in\mathbb{D}^{-}:\frac{z}{\left|z\right|}\in I\right\} .
\]
In the following computation, we want to count the number of points
of $\Lambda_{n}$ at level $1-1/2^{2l}$, $l\geq2N_{n}+1$, that fall
in the sector $\alpha\left(K\right)$ where $K$ is any arc in $\mathbb{T}$.
Since the arguments of those points are separated by $2\pi\alpha_n/2^{l}$
(recall that $\alpha_n\in [1/4,1)$) we
get
\[
\#\left(\Lambda_{n}\cap\left\{ \left|z\right|=1-2^{-2l}\right\} \cap\alpha\left(K\right)\right)\le\frac{m\left(K\right)}{2\pi\alpha2^{-l}}
 \asymp2^{l}m\left(K\right).
\]
Hence
\begin{eqnarray*}
\nu\left(S\left(J\right)\right) & = & \sum_{\lambda\in S\left(J\right)}\left(1-\left|\lambda\right|^{2}\right)\lesssim\sum_{\lambda\in S\left(J\right)}\left(1-\left|\lambda\right|\right)\\
 & \leq & \sum_{n \geq 1}\sum_{l>2N_{n}}2^{-2l}\cdot \#\left(\Lambda_{n}\cap\left\{ \left|z\right|=1-2^{-2l}\right\} \cap\alpha\left(J_{n}\right)\right)\\
 & \leq & \sum_{n \geq 1}m\left(J_{n}\right)\sum_{l>2N_{n}}2^{-l}\\
 & \leq & \sum_{n \geq 1}m\left(J_{n}\right)\\
 & \leq & m\left(J\right),
\end{eqnarray*}
so that $\nu$ is a Carleson measure and thus $\Lambda$ is an interpolating
sequence. So, for arbitrarily fixed $\eta\in\left(0,1\right)$, if
\[
z\in\mathbb{D}\setminus\bigcup_{\lambda\in\Lambda}\Omega\left(\lambda,\eta\right),\text{ with }\Omega\left(\lambda,\eta\right):=\left\{ z\in\mathbb{D}:\:\left|\frac{\lambda-z}{1-\overline{\lambda}z}\right|<\eta\right\} ,
\]
then $\left|B\left(z\right)\right|\asymp1$ (see for instance \cite[p. 218]{Niktr}). On the other hand, if
$z\in\Omega\left(\lambda,\eta\right)$, then
\begin{eqnarray*}
\widehat{\chi_\Sigma}\left(z\right) & \asymp & \int_{\Sigma}\frac{1-\left|\lambda\right|^{2}}{\left|\lambda-\xi\right|^{2}}dm\left(\xi\right)\\
 & \geq & \int_{I_{\lambda}\cap\Sigma}\frac{1-\left|\lambda\right|^{2}}{\left|\lambda-\xi\right|^{2}}dm\left(\xi\right)\\
 & \gtrsim & \frac{m\left(I_{\lambda}\cap\Sigma\right)}{1-\left|\lambda\right|},
\end{eqnarray*}
where $I_{\lambda}$ is the Privalov shadow of $\lambda$.  Let $n$ be such that $\lambda\in\Lambda_{n}$.  Then $\lambda\in S\left(I_{n}\right)$
and thus $m\left(I_{\lambda}\cap\Sigma\right)\geq m\left(I_{\lambda}\cap I_{n}\right)\geq m\left(I_{\lambda}\right)/2\asymp1-\left|\lambda\right|,$
so that $\widehat{\chi_\Sigma}\left(z\right)\gtrsim1$, $z\in\Omega\left(\lambda,\eta\right)$.
Finally, we obtain that 
\[
\inf_{z\in\mathbb{D}}\left(\left|B\left(z\right)\right|+\widehat{\chi_\Sigma}\left(z\right)\right)>0,
\]
which ends the proof.
\end{proof}

We have seen that if $\sigma(\Theta)\neq\T$ then $(\Theta H^2)^\perp$ admits a dominating set. In Proposition~\ref{DomSetDenseSpectrum}, we have constructed an example of inner functions $\Theta$ such that $\sigma(\Theta)=\T$ and the corresponding model space admits also a dominating set. It is thus
natural to ask whether dominating sets always
exits. The answer to this question is affirmative. Indeed, during the 2012 conference held in St Petersburg, V. Kapustin 
suggested an idea for the proof of this fact based on the Aleksandrov
disintegration formula (see \cite{CRM}). 
\\


\begin{Theorem}[(Kapustin)]
Every model space admits a dominating sets.
\end{Theorem}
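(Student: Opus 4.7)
The plan is to apply Aleksandrov's disintegration formula
$$\int_\T h\,dm \;=\; \int_\T\!\left(\int_\T h\,d\sigma_\Theta^\alpha\right)dm(\alpha),\qquad h\in C(\T),$$
and to exploit two special features of Clark measures of an inner function $\Theta$: each $\sigma_\Theta^\alpha$ is singular with carrier $E_\alpha=\{\xi\in\T:\Theta(\xi)=\alpha\}$ (see \eqref{carrier}), and the sets $\{E_\alpha\}_{\alpha\in\T}$ form an $m$-a.e.\ partition of $\T$, since $\Theta$ has unimodular non-tangential boundary values almost everywhere.

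First I would fix an arbitrary Borel set $S\subset\T$ with $0<m(S)<1$ and set
$$\Sigma := \Theta^{-1}(S)=\{\xi\in\T:\Theta(\xi)\in S\},$$
where $\Theta$ is understood via its boundary values. The carrier property forces $\sigma_\Theta^\alpha(\Sigma)=\|\sigma_\Theta^\alpha\|$ when $\alpha\in S$ and $\sigma_\Theta^\alpha(\Sigma)=0$ otherwise. Feeding $h=\chi_\Sigma$ into the disintegration formula yields
$$m(\Sigma)=\int_S\|\sigma_\Theta^\alpha\|\,dm(\alpha),$$
and setting $z=0$ in \eqref{CMdef} identifies $\|\sigma_\Theta^\alpha\|=(1-|\Theta(0)|^2)/|\alpha-\Theta(0)|^2$, a strictly positive continuous function on $\T$ whose total $m$-mass equals $1$. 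Hence $0<m(\Sigma)<1$, as required of a dominating set.

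To verify the dominating inequality, take $f\in(\Theta H^2)^\perp\cap C(\overline{\D})$ (a dense subspace). Disintegration applied to the continuous function $h=\chi_\Sigma|f|^2$, combined with the same carrier identity $\chi_\Sigma\equiv\chi_S(\alpha)$ on $E_\alpha$, collapses the inner integral to $\chi_S(\alpha)\int_\T|f|^2\,d\sigma_\Theta^\alpha$, which the Clark isometry \eqref{eq:clark identity} identifies with $\chi_S(\alpha)\|f\|_2^2$. Thus
$$\int_\Sigma|f|^2\,dm \;=\; m(S)\,\|f\|_2^2,$$
so $\Sigma$ is dominating with sharp constant $1/m(S)$; the identity then extends to all of $(\Theta H^2)^\perp$ by density.

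The main conceptual step is the observation that once $\Theta$ is inner the Clark measures are mutually singular with disjoint carriers $E_\alpha$, so a set of the form $\Theta^{-1}(S)$ behaves, measure-by-measure, like either the whole circle or the empty set. The only real obstacle is justifying the disintegration identity on the non-continuous function $\chi_\Sigma|f|^2$, but this is routine by approximating $\chi_\Sigma$ by continuous functions and using weak-$\ast$ continuity of $\mu\mapsto\int\cdot\,d\mu$ for the uniformly bounded family $(\sigma_\Theta^\alpha)$.
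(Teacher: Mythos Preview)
Your proof is correct and is essentially the same as Kapustin's argument in the paper: take a preimage $\Sigma=\Theta^{-1}(S)$, apply Aleksandrov's disintegration formula, and use that each Clark measure $\sigma_\Theta^\alpha$ is carried by $E_\alpha$ together with the Clark isometry \eqref{eq:clark identity}. The only cosmetic differences are that the paper states disintegration directly for $L^1$ functions (so no approximation of $\chi_\Sigma$ is needed) and checks $m(\Sigma)<1$ via the complement $\Theta^{-1}(\T\setminus S)$ rather than via the explicit formula for $\|\sigma_\Theta^\alpha\|$.
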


\begin{proof}
First recall the Aleksandrov disintegration formula:
for $f\in L^1$, we have 
\[
\int_\T f(\zeta)\,dm(\zeta)=\int_\T\left(\int_\T f(\zeta)\,d\sigma_\Theta^\alpha(\zeta)\right)\,dm(\alpha).
\]
Pick any partition of $\T$, $\T=A_1\cup A_2$ with $m(A_i)\in (0,1)$ and set 
\[
T_i=\Theta^{-1}(A_i)=\{\zeta\in\T:\Theta(\zeta)\in A_i\}.
\]
It follows from the disintegration formula 
\[
\int_{T_i}|f(\zeta)|^2\,dm(\zeta)=\int_\T\left(\int_\T \chi_{T_i}(\zeta)|f(\zeta)|^2\,d\sigma_\Theta^\alpha(\zeta)\right)\,dm(\alpha).
\]
Recall now that $E_\alpha=\{\zeta\in\T:\Theta(\zeta)=\alpha\}$ is a carrier for $\sigma_\Theta^\alpha$, see \eqref{carrier}. 
Since $E_\alpha\subset\T\setminus T_i$ when $\alpha\in\T\setminus A_i$, 
we have 
\[
\int_\T \chi_{T_i}(\zeta)|f(\zeta)|^2 \,d\sigma_\Theta^\alpha(\zeta)=0.
\]
For similar reason, if $\alpha\in A_i$, since $E_\alpha\subset T_i$, then 
\[
\int_\T \chi_{T_i}(\zeta)|f(\zeta)|^2 \,d\sigma_\Theta^\alpha(\zeta)=\int_\T |f(\zeta)|^2 \,d\sigma_\Theta^\alpha(\zeta).
\]
Using Clark's isometric embedding theorem, we finally obtain that 
\[
\int_{T_i}|f(\zeta)|^2\,dm(\zeta)=\int_{A_i}\|f\|_2^2\,dm(\alpha)=m(A_i)\|f\|_2^2. 
\]
It remains to check that $m(T_i)<1$. For this, observe that the previous equality implies in particular that $m(T_i)$ is nonzero as soon as $m(A_i)>0$. Since this is true for $A_i$ and its complementary, we conclude that $m(T_i)<1$ and so $T_i$ is a dominating set. 
\end{proof}


\section{Reverse Embeddings for $(\Theta H^2)^\perp$ -- proofs of Theorem~\ref{thm:first-reversed-embedding} and Theorem~\ref{thm:second-reversed-embedding}}\label{sec:proof-reverse}

The proof of Theorem~\ref{thm:first-reversed-embedding} requires a few preliminaries on perturbation of bases. Recall that a sequence $(x_n)_{n \geq 1} \subset \mathcal{H}$ is a \emph{Riesz basis} for a separable Hilbert space $\mathcal{H}$  if the closed linear span of $(x_{n})_{n \geq 1}$ is $\mathcal{H}$ and 
$$
\|\sum_{n \geq 1} a_n x_n\|_{\mathcal{H}}^2  \asymp \sum_{n \geq 1} |a_n|^2, \quad \forall (a_{n})_{n \geq 1} \in \ell^2(\mathbb{N}).
$$ 
Also recall that if $\Theta \in (CLS)$, then a carrier of the Clark measure $\sigma_{\Theta}^{\alpha}$,
$$\{\xi \in \T \setminus \sigma(\Theta): \Theta(\xi) = \alpha\},$$ 
is a discrete set $(\xi_n)_{n \geq 1}$ and so 
$\{k^{\Theta}_{\xi_n}/\|k^{\Theta}_{\xi_n}\|_2: n \geq 1\}$ is an orthonormal basis, 
a so-called Clark basis, for $(\Theta H^2)^{\perp}$. 
Recall that $\|k^{\Theta}_{\xi_n}\|_2=\sqrt{|\Theta'(\xi_n)|}$.

The following
result is 
due to Baranov 
(\cite[Corollary 1.3 and proof of Theorem 1.1]{Baranov-II}).

\begin{Theorem}[(Baranov)] \label{thm:perturbation Baranov Dyakonov}Let
$\Theta \in (CLS)$. There exists $\varepsilon_0=\varepsilon_0(\Theta)\in (0,1)$
making the following true: 
if $(k^\Theta_{\xi_{n}}/\|k^\Theta_{\xi_n}\|_2)_{n \geq 1}$, with $\xi_n \in ADC_\Theta$, 
is a Riesz basis for $(\Theta H^2)^{\perp}$ and  $\lambda_{n} \in \D^{-}$ satisfy 
\[
|\lambda_{n}-\xi_{n}|<\varepsilon_0 |\Theta'(\xi_{n})|^{-1},
\]
then $(k^\Theta_{\lambda_n}/\|k^\Theta_{\lambda_n}\|_2)_{n \geq 1} $
is also a Riesz basis for $(\Theta H^2)^{\perp}$. Moreover, there is a positive constant
$C$ such that for every $f \in (\Theta H^2)^{\perp}$,
we have
\begin{equation} \label{eq:perturbation}
\sum_{n\geq1}\frac{|f(\xi_{n})-f(\lambda_{n})|^{2}}{|\Theta'(\xi_{n})|}\leq\varepsilon_0 C\|f\|_2^2.
\end{equation}
\end{Theorem}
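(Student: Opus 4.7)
My plan is to reduce the statement to its analytic heart --- the quantitative bound \eqref{eq:perturbation} --- and deduce the Riesz basis stability from it by classical perturbation theory. Granting \eqref{eq:perturbation} for the moment, I rewrite it as
\[
\sum_{n\geq 1} \biggl|\Bigl\langle f, \tfrac{k^\Theta_{\xi_n} - k^\Theta_{\lambda_n}}{\|k^\Theta_{\xi_n}\|_2}\Bigr\rangle_2\biggr|^2 \leq \varepsilon_0 C\, \|f\|_2^2.
\]
This says that the normalized coefficient map $f \mapsto (f(\lambda_n)/\|k^\Theta_{\xi_n}\|_2)_n$ differs from $f \mapsto (f(\xi_n)/\|k^\Theta_{\xi_n}\|_2)_n$ by an operator of norm at most $\sqrt{\varepsilon_0 C}$ into $\ell^2$. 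The latter map is an isomorphism by the Riesz basis hypothesis. Combined with the comparability $\|k^\Theta_{\lambda_n}\|_2 \asymp \|k^\Theta_{\xi_n}\|_2$ (derived from \eqref{k-norm} using the smallness hypothesis and $(CLS)$ regularity to keep $|\Theta(\lambda_n)|$ bounded away from $1$), a sufficiently small $\varepsilon_0$ preserves invertibility, which is exactly the Riesz basis property for the perturbed system.

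The analytic work is to prove \eqref{eq:perturbation}. I would write
\[
f(\xi_n) - f(\lambda_n) = \int_{\gamma_n} f'(z)\, dz, \qquad \gamma_n := [\lambda_n, \xi_n],
\]
and apply Cauchy--Schwarz on this short segment ($|\gamma_n| < \varepsilon_0/|\Theta'(\xi_n)|$), obtaining
\[
|f(\xi_n) - f(\lambda_n)|^2 \le |\gamma_n| \int_{\gamma_n} |f'(z)|^2\, |dz|.
\]
The essential ingredient is Baranov's weighted Bernstein inequality for model spaces, of the form $\int_\D |f'(z)|^2\, d\omega_\Theta(z) \lesssim \|f\|_2^2$ for all $f \in (\Theta H^2)^\perp$, where $\omega_\Theta$ is a positive measure encoding the sublevel-set structure of $\Theta$. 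Under $(CLS)$, for $\varepsilon_0$ small enough each segment $\gamma_n$ sits inside a Carleson-type box around $\xi_n$ of scale $|\Theta'(\xi_n)|^{-1}$ where $\omega_\Theta$ is well controlled; these boxes are essentially disjoint because the $\xi_n$-kernels form a Riesz basis (hence a frame), which will allow the summation with weights $|\Theta'(\xi_n)|^{-1}$ to close back on $\|f\|_2^2$. Summing the Cauchy--Schwarz estimate with weight $|\Theta'(\xi_n)|^{-1}$ then yields \eqref{eq:perturbation} with constant $O(\varepsilon_0)$.

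The main obstacle will be the weighted Bernstein inequality itself in the precise form needed for pointwise integration along these short segments. Its $(CLS)$ proof requires sharp estimates on $\|(k^\Theta_z)'\|_2$ near points of $\ADC_\Theta$, marshaling the Ahern--Clark identity \eqref{eq:existence derivative} together with the geometry of the sublevel sets $L(\Theta, \varepsilon_1)$ to control $\omega_\Theta$ locally; this is where the constant $\varepsilon_0(\Theta)$ of the theorem is actually produced, and it cannot be divorced from the finer behavior of $\Theta$ near its boundary spectrum.
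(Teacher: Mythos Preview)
The paper does not prove this theorem. It is quoted as a known result of Baranov, with the citation ``\cite[Corollary 1.3 and proof of Theorem 1.1]{Baranov-II}'' immediately preceding the statement, and no argument is given. So there is no proof in the paper to compare your proposal against.

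That said, your outline is broadly in line with how such results are obtained in the cited work. The reduction of the Riesz basis stability to the quantitative estimate \eqref{eq:perturbation} via a Paley--Wiener type small-perturbation argument is standard and correct, provided you also verify $\|k^\Theta_{\lambda_n}\|_2 \asymp \|k^\Theta_{\xi_n}\|_2$; this comparability does follow under $(CLS)$ from the fact that $|\Theta'(\xi_n)|^{-1} \asymp d(\xi_n, L(\Theta,\varepsilon_1))$, so a ball of radius $\varepsilon_0 |\Theta'(\xi_n)|^{-1}$ around $\xi_n$ stays away from the sublevel set and $|\Theta(\lambda_n)|$ is bounded below. For \eqref{eq:perturbation} itself, your integral-plus-Cauchy--Schwarz step followed by a weighted Bernstein inequality is exactly the mechanism Baranov uses; indeed the paper invokes the same Bernstein inequalities from \cite{Baranov-JFA05, Baranov-09} in its Section~7 argument. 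One technical point to watch: when both $\lambda_n$ and $\xi_n$ lie on $\T$ the straight segment $[\lambda_n,\xi_n]$ need not stay in $\D^-$, so you should either replace it by a short arc or first restrict to $f \in (\Theta H^2)^\perp \cap C(\D^-)$ and use analytic continuation across $\T\setminus\sigma(\Theta)$, then pass to the closure.
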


In particular one can choose 
$(k^\Theta_{\xi_{n}}/\|k^\Theta_{\xi_n}\|_2)_{n \geq 1}$ to be a Clark basis. Let us mention that Cohn \cite{Cohn86} also established an interesting result about the stability of Clark bases for one-component inner functions.

\begin{proof}[of Theorem~\ref{thm:first-reversed-embedding}]
Let 
$\varepsilon_0\in\left(0,1\right)$ be the constant given in Theorem \ref{thm:perturbation Baranov Dyakonov} and let $(k^\Theta_{\xi_n}/\|k^\Theta_{\xi_n}\|_2)_{n \geq 1}$ be a Clark basis (we know that such a basis exists because $\Theta\in (CLS)$). For $\eta=\varepsilon_0 \sqrt{2}/2$, we define $\sigma_{n}\subset \T$ 
to be  the arc centered at $\xi_{n}$ of length 
$\eta|\Theta'(\xi_{n})|^{-1}$. Since $\eta<\varepsilon_0$, we easily check that 
$$S(\sigma_{n})\subset D(\xi_{n},\varepsilon_0 |\Theta'(\xi_{n})|^{-1}).$$
Recall  from  \eqref{window} that $S(\sigma_n)$ is the Carleson window over $\sigma_n$.
Moreover, we argue that 
\begin{equation}\label{eq:deux-point-bases}
|\xi_n-\xi_m|\geq \varepsilon_0 \min(|\Theta'(\xi_n)|^{-1}, |\Theta'(\xi_m)|^{-1}),\qquad n\neq m. 
\end{equation}
Indeed, for a point $\xi_n$, if for some $m\neq n$, there exists a point $\xi_m\in D(\xi_n,\varepsilon_0 |\Theta'(\xi_n)|^{-1})$, then we could define
\[
\lambda_\ell=\begin{cases}
\xi_\ell&\hbox{for }\ell\neq m\\
\xi_n&\hbox{for }\ell=m.
\end{cases}
\]
While this sequence satisfies the perturbation condition of Theorem~\ref{thm:perturbation Baranov Dyakonov},
it is certainly no longer a basis (the same vector appears two times). 

Now it follows from \eqref{eq:deux-point-bases} that $\sigma_n\cap \sigma_m=\varnothing$, $n\neq m$, and thus the Carleson windows $S(\sigma_n)$, $n\geq 1$, are disjoint. Moreover, since $\Theta\in (CLS)$, we know \cite{Baranov-II} that
\[
\min\left(|\Theta'(\xi_{n})|^{-1}, d(\xi_{n},\sigma(\Theta))\right )
 \asymp  d(\xi_{n},L(\Theta,\varepsilon_{1})),
\]
where $L(\Theta,\varepsilon_1)$ is the connected sub-level set for $\Theta$ defined in \eqref{SLS}. In particular,
\[
|\Theta'(\xi_{n})|^{-1}
 \ge c d(\xi_{n},L(\Theta,\varepsilon_{1}))
\]
for a suitable constant $c$.
The definition of $\sigma_{n}$ yields that 
\[
m(\sigma_{n})\geq \eta  c  d(\xi_{n},L(\Theta,\varepsilon_{1})),
\]
or more explicitely,
\[
  d(\xi_{n},L(\Theta,\varepsilon_{1})) \le \frac{m(\sigma_n)}{\eta c}
 =\frac{|\Theta'(\xi_n)|^{-1}}{c},
\]
meaning that $D(\xi_n,(c|\Theta'(\xi_n)|)^{-1})$ meets $L(\Theta,\varepsilon_1)$
(see Figure 3).
Since $D(\xi_n,(c|\Theta'(\xi_n)|)^{-1})\subset S(\frac{2}{\eta c}\sigma_n)$
it is enough to 
pick $N=2/(\eta c)$. So
\[
S(N\sigma_{n})\cap L(\Theta,\varepsilon_{1})\neq\varnothing,
\]
which implies, by hypothesis, that for every $n$
\begin{equation}\label{eq:reversed-S-sigma_n}
\mu (S(\sigma_{n}))\geq c_{N} m(\sigma_{n}).
\end{equation}
\begin{figure}
\includegraphics[scale=0.3]{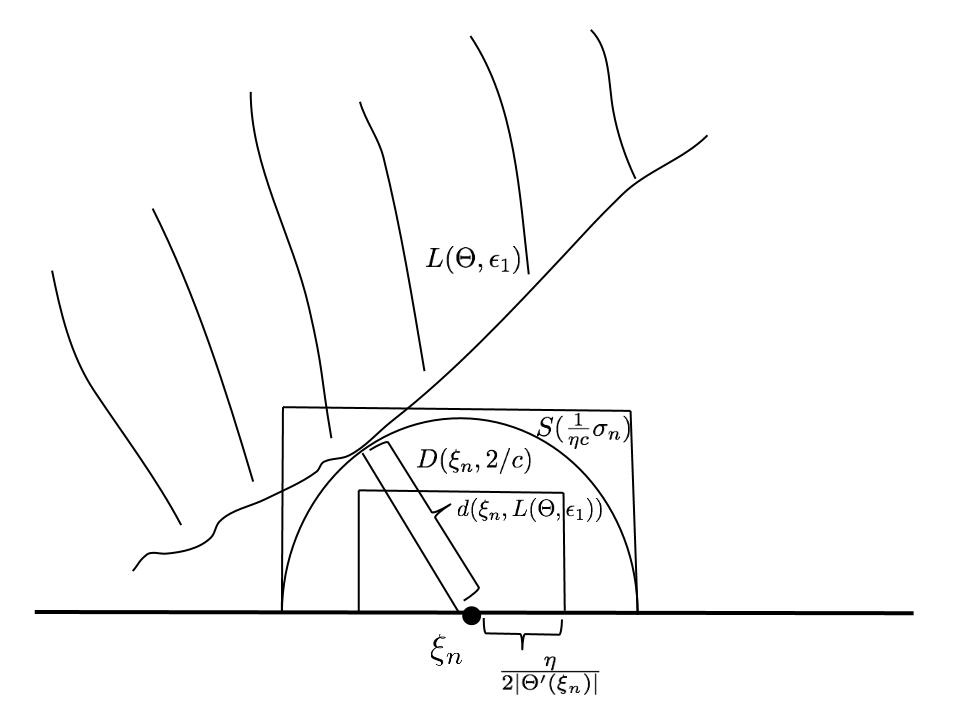}
\caption{}
\end{figure}

Let us now take $f\in (\Theta H^2)^{\perp}$. Since, as mentioned in the introduction, $(\Theta H^2)^\perp\cap C(\D^{-})$ is dense in $(\Theta H^2)^\perp$ and since $(\Theta H^2)^\perp$ embeds continuously into $L^2(\mu)$, we may assume that $f$ is continuous on $\D^-$. Define $\mu_{n}$ as the point of $S(\sigma_{n})$ satisfying
\[
|f(\mu_{n})|=\inf_{\zeta\in\sigma_{n}}|f(\zeta)|.
\]
Since $(k^\Theta_{\xi_n}/\sqrt{|\Theta'(\xi_n)|})_{n \geq 1}$ is an orthonormal basis of $(\Theta H^2)^\perp$, we have
\[
\|f\|_2^2 =\sum_{n \geq 1}\frac{|f(\xi_{n})|^{2}}{|\Theta'(\xi_{n})|}\leq 2\left(\sum_{n \geq 1}\frac{|f(\xi_{n})-f(\mu_{n})|^{2}}{|\Theta'(\xi_{n})|}+\sum_{n \geq 1} \frac{|f(\mu_{n})|^{2}}{|\Theta'(\xi_{n})|} \right).
\]
Using Theorem~\ref{thm:perturbation Baranov Dyakonov}, 
along with \eqref{eq:reversed-S-sigma_n}, and remembering that the Carleson windows $S(\sigma_n)$ are disjoint, we get   
\begin{eqnarray}\label{estimate-p.21}
\|f\|_2^2  & \leq & 2\varepsilon_0 C \|f\|_2^2+\frac{2}{\eta}\sum_{n\geq 1} m(\sigma_{n})|f(\mu_{n})|^{2}\nonumber\\
 & \leq & 2\varepsilon_0 C \|f\|_2^2+\frac{2}{\eta c_{N}}\sum_{n \geq 1}\mu(S(\sigma_{n}))|f(\mu_{n})|^{2}\nonumber\\
 & \leq & 2\varepsilon_0 C \|f\|_2^2+\frac{2}{\eta c_{N}}\sum_{n \geq 1}\int_{S(\sigma_{n})}|f|^{2}d\mu\nonumber\\
 & \leq & 2\varepsilon_0 C \|f\|_2^2+\frac{2}{\eta c_{N}}\int_{\overline{\mathbb{D}}}\left|f\right|^{2}d\mu.
\end{eqnarray}
Finally, for sufficiently small $\varepsilon_0$  we obtain 
\[
\|f\|_2^2 \leq\frac{2}{(1-2\varepsilon_0C)\eta c_{N}}\int_{\overline{\mathbb{D}}}\left|f\right|^{2}d\mu,
\]
which yields the required estimate. 
\end{proof}

\begin{Remark}
Note that from the first line of \eqref{estimate-p.21} we obtain
\[
 \|f\|_2^2\le\frac{2}{(1-2\varepsilon_0C)\eta}\int_{\Sigma}
 |f|^2dm,
\]
where $\Sigma=\bigcup \sigma_n$. This gives another way of constructing
dominating sets (at least when the Clark measure is discrete): here we take neighborhoods of the support points of the
Clark basis instead of an open neighborhood of the spectrum as in 
Theorem \ref{thm:neighborhood dom.}.
\end{Remark}

\begin{Remark}
Here is a simple example showing that it is not sufficient
that condition \eqref{eq:reversed-condition-first} of 
Theorem~\ref{thm:first-reversed-embedding} 
is satisfied  if $N$ is not suitably chosen. Let us discuss this example
in the upper half plane and consider
the special case of $\Theta\left(z\right)=e^{2iz}$.
It is known that with this $\Theta$, the model space $(\Theta H^2)^{\perp}$
is isomorphic to the Paley-Wiener space $PW_{\pi}^{2}$ (the space
of entire functions of exponential type at most $\pi$, whose restriction
to $\mathbb{R}$ belongs to $L^{2}$). 
Given $\varepsilon>0$, the sub-level set is
exactly
\[
L\left(\Theta;\varepsilon\right)=\left\{ z\in\mathbb{C}:\;\text{Im}\left(z\right)>\theta:= \frac{1}{2}\ln\frac{1}{\varepsilon} \right\} .
\]
In what follows we will choose $\varepsilon=e^{-6}$ so that $\theta=3$.

Note that for the sets $(\sigma_n)_n$ appearing in the proof of Theorem~\ref{thm:first-reversed-embedding}, by the Kadets-Ingham theorem we can take $\sigma_n=(n-\delta,n+\delta)$,
$n\in\Z$, where $0<\delta<1/4$ (see for instance \cite[Theorem D4.1.2]{Nik2}).

Let us now 
consider $\Lambda=\left\{ \lambda_{n};\; n\in\mathbb{Z}\setminus\left\{ 0\right\} \right\} $
with $\lambda_{n}:=n+\frac{1}{8} i$ and 
\[
\mu_\Lambda:=\sum_{n\geq 1}\delta_{\lambda_{n}}.
\]
Note that $\Z$ is a complete interpolating sequence for $PW^2_{\pi}$ (see e.g. \cite{Seip04}) as will be $\Lambda\cup \{\lambda_0\}$.
Hence, we can find $f\in PW_{\pi}^{2}$, such that $f\left(\lambda_{n}\right)=0$,
$n\in\mathbb{Z}\setminus\left\{ 0\right\} $
and $f(\lambda_0)=1$. In particular, $\int\left|f\right|^{2}d\mu_\Lambda=0$
and of course $\left\Vert f\right\Vert _{2}\neq0$. So, the reverse
embedding fails, while \eqref{eq:reversed-condition-first} is valid
for $N=1$. Indeed, if $I$ is an interval such that $S\left(I\right)\cap L\left(\Theta,\varepsilon\right)\neq\varnothing$, 
then $m(I)\ge 3$ and since $0\le \lambda_{n+1}-\lambda_n\le 2$ we have
$S(I)\cap \Lambda\neq \varnothing$.
%
It actually turns out that
$\mu_\Lambda\left(S\left(I\right)\right)=\#\left(S\left(I\right)\cap\Lambda\right)
\asymp m(I)$.

An appropriate choice for $N$ here is $N\ge 6$. In particular
$S(6\sigma_0)$ meets $L(\Theta,\varepsilon)$. But $S(\sigma_0)$ does not contain
any point of $\Lambda$ so that $\mu(S(\sigma_0))=0\not\gtrsim m(\sigma_0)$
and \eqref{eq:reversed-condition-first} fails as it should.
\\

The above example is also instructive in that it allows to observe that 
Theorem~\ref{thm:first-reversed-embedding} 
does not apply when the sequence $\Lambda$ is ``too far'' from $\R$. Consider
for instance $\Lambda_1=\{n+i;n\in\Z\setminus\{0\}\}$ and 
$\Lambda_2=\{n+i;n\in\Z\}$. For $\Lambda_2$ we get reverse embedding
while for $\Lambda_1$ we won't.
Observe that for both sequences we will have
$\mu(S(\sigma_n))=0$ for every $n$, so that the reverse Carleson
inequality fails.
\end{Remark}

Now we give the proof of our reverse embeddings result which involves dominating sets. We will need the following
lemma, for which we omit the proof.
\begin{Lemma}
\label{lem:partition dominating set}Let $\Sigma\subsetneq\mathbb{T}$
and $\delta>0$. It is possible to find a (finite) sequence $(\sigma_{n})_{n \geq 1}$
of disjoint semi-open arcs of $\mathbb{T}$ such that $\Sigma\subset\bigcup_{n}\sigma_{n}$,
$\Sigma\cap\sigma_{n}\neq\emptyset$ and $m(\sigma_{n})<\delta$.
\end{Lemma}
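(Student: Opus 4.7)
The plan is to produce the arcs $(\sigma_n)$ by first partitioning the whole circle $\T$ into equal semi-open arcs of $m$-measure less than $\delta$, and then discarding those that miss $\Sigma$.

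More precisely, fix an integer $N$ with $N>1/\delta$, and let $J_k := \{e^{2\pi i t}: t\in [(k-1)/N,k/N)\}$ for $k=1,\ldots,N$. These are pairwise disjoint semi-open arcs, their union is $\T$, and each satisfies $m(J_k)=1/N<\delta$. Now define $(\sigma_n)_{n\ge 1}$ to be the (finite) subfamily consisting of those $J_k$ for which $J_k\cap\Sigma\neq\varnothing$; re-index them $\sigma_1,\ldots,\sigma_M$, where $M\le N$.

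By construction, the $\sigma_n$ are pairwise disjoint semi-open arcs with $m(\sigma_n)<\delta$, and each one meets $\Sigma$. Finally, since $\T=\bigcup_{k=1}^N J_k$, every point of $\Sigma$ lies in some $J_k$, and such a $J_k$ then intersects $\Sigma$ and thus belongs to the subfamily; hence $\Sigma\subset \bigcup_n \sigma_n$. There is essentially no obstacle here: the only mild point to keep track of is the semi-openness, which is needed so that the $J_k$ genuinely partition $\T$.
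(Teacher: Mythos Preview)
Your argument is correct: partitioning $\T$ into $N>1/\delta$ equal semi-open arcs and retaining exactly those that meet $\Sigma$ yields a finite family of disjoint semi-open arcs covering $\Sigma$, each of $m$-measure $1/N<\delta$ and each intersecting $\Sigma$. The paper itself omits the proof of this lemma, so there is nothing to compare against; your construction is the natural one.
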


\begin{proof}[of Theorem~\ref{thm:second-reversed-embedding}]
Since $(\Theta H^2)^\perp\cap C(\D^{-})$ is dense in $(\Theta H^2)^\perp$ and since $(\Theta H^2)^\perp \hookrightarrow L^2(\mu)$, it suffices
to show the inequality $\|f\|_2\lesssim \|f\|_{\mu}$, for every function $f\in (\Theta H^2)^\perp$  continuous on $\D^-$. Fix $\varepsilon>0$.
Since $f$ is uniformly
continuous, it is possible to find a $\delta>0$ such
that, for every arc $I$ of length less than $\delta$, we have 
\[
|f(z)-f(\mu)|<\varepsilon, \quad \forall z,\mu\in S(I). 
\]
According to Lemma \ref{lem:partition dominating set}, we can construct
a sequence of disjoint semi-open arcs of length less than $\delta$, covering
and intersecting $\Sigma$. In particular, $m(\sigma_n) \leq k \mu(S(\sigma_n))$ for some $k > 0$ independent of $n$.
We now introduce the points $\xi_{n}$ and $z_{n}$ such that 
\[
|f(\xi_{n})|=\max\{|f(\xi)|: \xi\in\sigma_{n}^{-}\},\quad |f(z_{n})|=\min\{|f(z)|:  z\in S(\sigma_{n}^{-})\}
\]
These points satisfy
\[
|f(\xi_{n})|\leq\varepsilon+|f(z_{n})|.
\]

Since $\Sigma$ is dominating, there is some constant $c > 0$ such that 
\begin{align*}
\|f\|_2^2 & \leq c \int_{\Sigma} |f|^2 dm\\
& \leq c \sum_{n \geq 1} m(\sigma_n) |f(\xi_n)|^2\\
& \leq 2 c \left( \varepsilon^2 \sum_{n \geq 1} m(\sigma_n) + \sum_{n \geq 1} m(\sigma_n) |f(z_n)|^2\right)\\
& \leq 2c \left( \varepsilon^2  + k \sum_{n \geq 1} \mu(S(\sigma_n)) |f(z_n)|^2\right)\\
& \leq 2c  \varepsilon^2  + 2 c k \sum_{n \geq 1} \int_{S(\sigma_n)} |f|^2 d \mu\\
& \leq 2 c  \varepsilon^2 +2 c k \int_{\D^{-}} |f|^2 d \mu.
\end{align*}\
Since $\varepsilon$ can be arbitrarily small, we have the desired estimate $\|f\|_2 \lesssim \|f\|_{\mu}$.
\end{proof}

\begin{Remark}
If $\Theta$ is inner and $\sigma(\Theta)=\T$, then, according
to Corollary \ref{thm:dense-dominating-set}, any dominating set $\Sigma$
for $(\Theta H^2)^{\perp}$ is dense in $\T$. Hence requiring the reverse
Carleson inequality $\mu(S(I))\gtrsim m(I)$ for any arc $I\subset \T$
meeting $\Sigma$ is the same as requiring this inequality for {\it every}
arc $I\subset \T$. This situation is completely described by
Lef\`evre et al. Hence Theorem
\ref{thm:second-reversed-embedding} is interesting for inner functions $\Theta$
for which $\sigma(\Theta)\not =\T$. In that case Theorem 
\ref{thm:neighborhood dom.} states that
dominating sets always exist.
\end{Remark}

\section{Baranov's proof}

After submission of our paper, Anton Baranov pointed out that in Theorem~\ref{thm:first-reversed-embedding} the assumption that $\Theta\in (CLS)$ is not essential. With his kind permission, we include his proof of this result which 
is different in flavor and  which is based on the Bernstein-type 
inequalities in model spaces he obtained in \cite{Baranov-JFA05,Baranov-09}.
It uses a Whitney type decomposition of $\T\setminus \sigma(\Theta)$. 
Let $\varepsilon>0$, let $\delta\in (0,1/2)$ and let 
\[
d_\varepsilon(\zeta)=d(\zeta,L(\Theta,\varepsilon)),
\]
where we recall that $L(\Theta,\varepsilon)=\{z\in\D:|\Theta(z)|<\varepsilon\}$. Since 
\[
\int_{\T\setminus\sigma(\Theta)}d_\varepsilon^{-1}(\zeta)\,dm(\zeta)=\infty,
\]
we can choose a sequence of arcs $I_k$ with pairwise disjoint interiors such that $\bigcup_k I_k=\T\setminus\sigma(\Theta)$ and 
\[
\int_{I_k}d_\varepsilon^{-1}(\zeta)\,dm(\zeta)=\delta.
\]
In this case\footnote{Note that such a system of arcs was also considered in \cite{Baranov-09} for $\delta=1/2$.}
\begin{equation}\label{eq:distance-level-set}
\frac{1-\delta}{\delta}m(I_k)\leq d(I_k,L(\Theta,\varepsilon))\leq\frac{1}{\delta}m(I_k).
\end{equation}
Indeed, by the definition of $I_k$, there exists $\zeta_k\in I_k$ such that $d_\varepsilon(\zeta_k)=\frac{1}{\delta}m(I_k)$, whence for any $\zeta\in I_k$, we have 
\[
d_\varepsilon(\zeta)\geq d_\varepsilon(\zeta_k)-m(I_k)\geq \frac{1-\delta}{\delta}m(I_k).
\]
It follows from \eqref{eq:distance-level-set} that 
\[
m(I_k)\int_{I_k}d_\varepsilon^{-2}(u)\,dm(u)\leq \left(\frac{\delta}{1-\delta}\right)^2.
\]
Now recall the definition of the weight involved in
the Bernstein-type inequality 
\[
w_p(z)=\|k_z^\Theta\|_q^{-\frac{p}{p+1}},
\]
where $1\leq p<\infty$ and $q$ is the conjugate exponent of $p$. Later on we will choose 
$p$ such that $1\leq p<2$. Then it is shown in \cite[Lemmas 4.5 \& 4.9]{Baranov-JFA05} that 
\[
w_p(\zeta)\geq C_0 d_\varepsilon(\zeta),
\]
where $C_0$ depends only on $p$ and $\varepsilon$ (but not on $\Theta$). Thus
\begin{equation}\label{eq:weight-integral}
m(I_k)\int_{I_k} w_p^{-2}(\zeta)\,dm(\zeta)\leq C_1(p,\varepsilon) \delta^2.
\end{equation}

Let $I_k^{(j)}$, $j=1, \dots 4$ be the quarters of $I_k$
and let $S_k^{(j)}$ be the parts of $S_k$ lying over  $I_k^{(j)}$. Thus, $S_k = \bigcup_{j=1}^4 S_k^{(j)}$
(note that $S_k^{(j)}$ are not standard Carleson windows). By \eqref{eq:distance-level-set}, we have 
\[
S(NI_k^{(j)})\cap L(\Theta,\varepsilon)\neq \emptyset
\]
as soon as $N>\frac{8}{\delta}$. 
This will be the choice of $N$ in the Theorem.
Suppose now that
\[
 A:=\inf_I\frac{\mu(S(I))}{m(I)}>0,
\]
where the infimum is taken over all arcs $I\subset\T$ with
$S(NI)\cap L(\Theta,\varepsilon_1)\neq\emptyset$.
 Then we have 
\[
\mu(S_k^{(j)})\geq \mu(S(I_k^{(j)}))\geq A m(I_k^{(j)}).
\]

Now let $f\in K_\Theta$ be continuous in $\D\cup\T$. By the mean value property, there exists $s_k^{(j)}\in S_k^{(j)}$ such that 
\begin{equation}\label{eq:int-S-k-j}
\int_{S_k^{(j)}} |f|^2d\mu = |f(s_k^{(j)})|^2 \mu(S_k^{(j)})  \geq A m(I_k^{(j)}) \cdot |f(s_k^{(j)})|^2.
\end{equation}
Denote by 
\[
\mathfrak{J}_k^{i,j}=\int_{I_k^{(i)}}|f(u)-f(s_k^{(j)})|^2\,dm(u).
\]
Then we have
\begin{eqnarray*}
\sum_k \int_{I_k}|f|^2\,dm&=&\sum_k \left(\int_{I_k^{(1)}}|f(u)|^2+\int_{I_k^{(2)}}|f(u)|^2+\int_{I_k^{(3)}}|f(u)|^2+\int_{I_k^{(4)}}|f(u)|^2 \right)\,dm(u) \\
&\leq&2\sum_k (\mathfrak{J}_k^{1,3}+\mathfrak{J}_k^{2,4}+\mathfrak{J}_k^{3,1}+\mathfrak{J}_k^{4,2})\\
&+&2\sum_k\left(|f(s_k^{(3)})|^2 m(I_k^{(1)})+|f(s_k^{(4)})|^2 m(I_k^{(2)})+|f(s_k^{(1)})|^2 m(I_k^{(3)})+|f(s_k^{(2)})|^2 m(I_k^{(4)}) \right).
\end{eqnarray*}
Since $m(I_k^{(1)})=m(I_k^{(2)})=m(I_k^{(3)})=m(I_k^{(4)})$, we get with \eqref{eq:int-S-k-j}
\[
\sum_k \int_{I_k}|f|^2\,dm\leq 2\sum_k (\mathfrak{J}_k^{1,3}+\mathfrak{J}_k^{2,4}+\mathfrak{J}_k^{3,1}+\mathfrak{J}_k^{4,2})+2A^{-1}\|f\|_{\mu}^2.
\]

Let us now estimate $\sum_k \mathfrak{J}_k^{1,3}$. We have 
\[
\mathfrak{J}_k^{1,3}=\int_{I_k^{(1)}}|f(u)-f(s_k^{(3)})|^2\,dm(u)=\int_{I_k^{(1)}}\left|\int_{[s_k^{(3)},u]}f'(v)\,|dv|\right|^2\,dm(u),
\]
where $[s_k^{(3)},u]$ denotes the interval with endpoints $s_k^{(3)}$ and $u$ and $|dv|$ stands for the Lebesgue measure on this interval. Using Cauchy-Schwarz' inequality, we obtain 
\[
\mathfrak{J}_k^{1,3}\leq \int_{I_k^{(1)}}\left(\int_{[s_k^{(3)},u]}|f'(v)|^2w^2_p(v)\,|dv|\right) \left( \int_{[s_k^{(3)},u]}w_p^{-2}(v)\,|dv|  \right)\,dm(u).
\] 
Now recall that the norms of reproducing kernels in model spaces have a certain monotonicity along the radii. More precisely, let $q>1$. Then it is shown in \cite[Corollary 4.7.]{Baranov-JFA05} that there exists $C=C(q)$ such that for any $z=\rho e^{it}$ and $\tilde z=\tilde\rho e^{it}$ with $0\leq \tilde\rho\leq \rho$, we have
\begin{equation}\label{norm-rep-kern-monotone}
\|k_{\tilde z}^\Theta\|_q\leq C(q) \|k_z^\Theta\|_q.
\end{equation}
Using \eqref{norm-rep-kern-monotone}, \eqref{eq:weight-integral} and the fact that the angle\footnote{That explains why we choose a decomposition with $\mathfrak{J}_k^{i,j}$, $i\neq j$, since in this case the interval $[s_k^{(j)},u]$, $u\in I_k^{(i)}$, will never be orthogonal to the boundary.} between $[s_k^{(3)},u]$ and $\T$ is separated from $\frac{\pi}{2}$, we conclude that 
\[
m(I_k)\int_{[s_k^{(3)},u]} w_p^{-2}(v)\,|dv|\leq C_1(p) m(I_k) \int_{I_k} w_p^{-2}(v)\,|dv|\leq C_2(p,\varepsilon) \delta^2.
\]
Hence
\[
\sum_k\mathfrak{J}_k^{1,3}\leq C_2(p,\varepsilon) \delta^2 \sum_k \frac{1}{m(I_k)}\int_{I_k^{(1)}} \int_{[s_k^{(3)},u]}|f'(v)|^2 w_p^2(v)\,|dv|\,dm(u).
\]
Again just by the mean value property, there exists $u_k\in I_k^{(1)}$ such that 
\[
\sum_k \frac{1}{m(I_k)}\int_{I_k^{(1)}}\int_{[s_k^{(3)},u]}|f'(v)|^2 w_p^2(v)\,|dv|\,dm(u)=\frac{1}{4}\sum_k\int_{[s_k^{(3)},u_k]}|f'(v)|^2 w_p^2(v)\,|dv|.
\]
Now note that the measure $\sum_k m_{[s_k^{(3)},u_k]}$ (sum of Lebesgue measures on the intervals) 
is a Carleson measure with a uniform bound on the Carleson constant independent of the location of $u_k\in I_k^{(1)}$ and $s_k^{(3)}\in S_k^{(3)}$. Then by the Bernstein's inequality, we have 
\[
\sum_k\int_{[s_k^{(3)},u_k]}|f'(v)|^2 w_p^2(v)\,|dv|\leq C_2(p)\|f\|_2^2,
\] 
which gives 
\[
\sum_k \mathfrak{J}_k^{1,3}\leq C_3(p,\varepsilon) \delta^2 \|f\|_2^2.
\]
Using similar estimates for the other terms $\sum_k \mathfrak{J}_k^{2,4}$, $\sum_k \mathfrak{J}_k^{3,1}$ and $\sum_k \mathfrak{J}_k^{4,2}$, we obtain 
\[
\sum_k \int_{I_k}|f|^2\,dm\leq C_4(p,\varepsilon) \delta^2 \|f\|_2^2+2A^{-1}\|f\|_\mu^2.
\]
Finally note that for the integrals over $\sigma(\Theta)=\T\setminus\bigcup_k I_k$, we have 
\begin{equation}\label{eq:integral-over-spectrum}
\int_{\sigma(\Theta)}|f|^2\,dm\leq \tilde A \|f\|_\mu,
\end{equation}
with $\tilde A$ depending only on $A$. Indeed for any $\rho>0$, there exists arcs $J_\ell$ with pairwise disjoint interiors such that $|J_\ell|\leq \rho$, $\sigma(\Theta)\subset \bigcup_\ell J_\ell$ and $S(J_\ell)\cap L(\Theta,\varepsilon)\neq\emptyset$. Then \eqref{eq:integral-over-spectrum} follows in a trivial way from the proof in \cite{Queffelec}. Thus we obtain finally 
\[
\|f\|_2^2 \leq (\tilde{A}+2A^{-1})\|f\|_\mu^2+C_4(p,\varepsilon) \delta^2 \|f\|_2^2,
\]
that is 
\[
(1-C_4(p,\varepsilon)\delta^2)\|f\|_2^2\leq (\tilde{A}+2A)\|f\|_\mu^2.
\]
It remains to choose $\delta$ so small that $C_4(p,\varepsilon)\delta^2<1$. Note that $\delta$ (and thus $N$) depend only on $\varepsilon$ and some fixed $1\leq p<2$. 
\bigskip

\begin{acknowledgements}\label{ackref}
We would like to warmly thank A. Baranov for his comments and 
suggestions, and in particular for his proof of Theorem~\ref{thm:first-reversed-embedding} which does not require the $(CLS)$-condition. We also thank V. Kapustin for the proof he
suggested for the existence of dominating sets. 
\end{acknowledgements}

\bibliographystyle{plain}

\bibliography{references}

\affiliationone{Alain Blandig\`eres and Emmanuel Fricain\\
   Institut Camille Jordan, Universit\'e Claude Bernard Lyon 1,  69622 Villeurbanne C\'edex\\
   France
   \email{blandigneres@math.univ-lyon1.fr\\
   fricain@math.univ-lyon1.fr}}
\affiliationtwo{Fr\'{e}d\'{e}ric Gaunard and Andreas Hartmann\\
  Institut de Math\'ematiques de Bordeaux, Universit\'e Bordeaux 1, 351 cours de la Lib\'eration 33405 Talence C\'edex \\
   France
   \email{Frederic.Gaunard@math.u-bordeaux1.fr\\
   Andreas.Hartmann@math.u-bordeaux1.fr}}
\affiliationthree{William T. Ross\\
  Department of Mathematics and Computer Science, University of Richmond, Richmond, VA 23173 \\
  USA
   \email{wross@richmond.edu}}

\end{document}